\scrollmode
\documentclass[12pt]{amsart}

\usepackage{amsfonts, amssymb, amscd}
\usepackage{verbatim}
\usepackage{eucal}
\usepackage{amssymb}
\usepackage{mathrsfs}
\usepackage{graphicx}
\usepackage{psfrag}
\usepackage{cite}
\usepackage{upref}
\usepackage{url}
        \headheight=7pt
        \textheight=574pt
        \textwidth=432pt
        \topmargin=14pt
        \oddsidemargin=18pt
        \evensidemargin=18pt

\def\deg        {{\rm deg}}
\def\ZZ         {{\mathbb Z}}
\def\QQ         {{\mathbb Q}}
\def\PP         {{\mathbb P}}
\def\RR         {{\mathbb R}}
\def\FF         {{\mathbb F}}
\def\CC         {{\mathbb C}}
\def\HH         {{\mathfrak H}}

\def\ii         {{\rm i}}

\renewcommand{\mod}{\bmod}
\newcommand\draft[1]    {{}}
\newcommand{\OOO}{\mathscr O}
\newcommand{\abcd}[4]{\left(
        \begin{smallmatrix}#1&#2\\#3&#4\end{smallmatrix}\right)}

\newcommand{\tp}[1]{#1>\!\!>0}
\newcommand{\fa}{\mathfrak{a}}
\newcommand{\fA}{\mathfrak{A}}

\DeclareMathOperator{\diag}{diag}
\DeclareMathOperator{\Sym}{Sym}
\DeclareMathOperator{\SL}{SL}
\DeclareMathOperator{\GL}{GL}
\DeclareMathOperator{\PSL}{PSL}
\DeclareMathOperator{\Tr}{Tr}
\DeclareMathOperator{\Norm}{N}
\DeclareMathOperator{\vol}{vol}
\DeclareMathOperator{\sign}{sgn}
\DeclareMathOperator{\ord}{ord}
\DeclareMathOperator{\Gal}{Gal}

\newtheorem{lemma}{Lemma}[section]
\newtheorem{theorem}[lemma]{Theorem}
\newtheorem{corollary}[lemma]{Corollary} 
\newtheorem{proposition}[lemma]{Proposition}
\theoremstyle{definition}
\newtheorem{definition}[lemma]{Definition}

\newtheorem{remark}[lemma]{Remark}
\theoremstyle{remark}
\newtheorem*{proof*}{Proof}
\numberwithin{equation}{section}

\newenvironment{proofof}[1]{\noindent{\emph{Proof of {#1}.}}}{\qed\vspace{3ex}}

\begin{document}
\title{On Hilbert modular threefolds
of discriminant 49} 
 
\author{Lev A.  Borisov}
\address{Department of Mathematics,
Rutgers University,
110 Frelinghuysen Rd,
Piscataway, NJ 08854, USA}
\email{borisov@math.rutgers.edu}

\author{Paul E. Gunnells}
\address{Department of Mathematics and Statistics, University of
Massachusetts Amherst, Amherst, MA 01003, USA}
\email{gunnells@math.umass.edu}

\dedicatory{To Don Zagier, on the occasion of his 60th birthday.}
\date{August 8, 2011}

\subjclass[2010]{Primary 11F41; Secondary 14G35}

\thanks{The authors were partially supported by the NSF, through
grants DMS--1003445 (LB) and DMS--0801214 (PG)}

\begin{abstract}
Let $K$ be the totally real cubic field of discriminant $49$, let $\OOO$
be its ring of integers, and let $p\subset \OOO $ be the prime over $7$.
Let $\Gamma (p)\subset \Gamma = SL_{2} (\OOO)$ be the principal
congruence subgroup of level $p$.  This paper investigates the
geometry of the Hilbert modular threefold attached to $\Gamma (p) $
and some related varieties.  In particular, we discover an octic in
$\PP^3$ with $84$ isolated singular points of type $A_2$.
\end{abstract}

\maketitle
\section{Introduction}

Let $K$ be the totally real cubic field of discriminant $49$, let $\OOO$
be its ring of integers, and let $p\subset \OOO $ be the prime over $7$.
Let $\Gamma (p)\subset \Gamma = SL_{2} (\OOO)$ be the principal
congruence subgroup of level $p$.  This paper investigates the
geometry of the Hilbert modular threefold $X^{\circ} = \Gamma
(p)\backslash \HH^{3}$ and some related varieties:

\begin{enumerate}
\item Let $X$ be the minimal compactification of $X^{\circ}$, and let
$X_{ch}$ be the singular toroidal compactification built using the
fans determined by taking the cones on the faces of the convex hulls
of the totally positive lattice points in the cusp data.  Then
$X_{ch}$ is the canonical model of $X$ (Theorem \ref{canmodel}).
\item We construct parallel weight $1$ Eisenstein series $F_{0},
F_{1}, F_{2}, F_{4}$ and a parallel weight $2$ Eisenstein series
$E_{2}$ that generate the ring of symmetric Hilbert modular forms of
level $p$ and parallel weight (i.e., the subring of the parallel
weight Hilbert modular forms invariant under the action of the Galois
group $G = G (F/\QQ) \simeq \ZZ /3\ZZ$) (Theorem \ref{mainE}).
\item There is a weighted homogeneous polynomial $P$ of degree $8$
with $42$ terms such that $P (F_{0}, F_{1}, F_{2}, F_{4}, E_{2})=0$.
This polynomial generates the ideal of relations on the $F_{i}$ and
$E_{2}$, and the symmetric Hilbert modular threefold $X_{Gal} = X/G$
is the hypersurface cut out by $P=0$ in the weighted projective space
$\PP (1,1,1,1,2)$ (Theorem \ref{symhm3fd}).
\item Let $Q$ be the polynomial obtained from $P$ by setting the
weight $2$ variable to zero.  Then $Q$ has $24$ terms and defines a
degree $8$ hypersurface in $\PP^{3}$ with singular locus being $84$
quotient singularities of type $A_{2}$ (Proposition \ref{prop:octic}).
\end{enumerate}

These results can be considered part of the venerable tradition of
writing explicit equations for modular varieties, a tradition
including (i) the Klein quartic, which presents the modular curve $X
(7)$ as an explicit quartic in $\PP^{3}$, (ii) the Igusa quartic,
which is the minimal compactification of the Siegel modular threefold
of level $2$ \cite{hunt.nice, janus}, and (iii) the Burkhardt quartic,
which is a three-dimensional ball quotient \cite{janus, hunt.nice}.
Our results are in the spirit of results of van der Geer,
Hirzebruch, van de Ven, and Zagier for Hilbert modular surfaces
\cite{vdgz, vdg.minimal, vdg6, hirz.bonn, hirz.5, h.vdv, vdg.vdv}, and
use many of the same techniques: toroidal compactifications, the
Shimizu trace formula \cite{shim}, the action of the finite group
$\Gamma /\Gamma (p) \simeq SL_{2} (\FF_{7})$, and explicit
construction of modular forms.

We now give an overview of the paper.  Section \ref{s:F} collects
basic facts about our field $K$, and section \ref{s:resolution}
describes the toroidal resolutions $X_{sm}$ and $X_{ch}$ we consider.
In section \ref{s:intersections} we compute intersection numbers of
exceptional divisors on $X_{sm}$ and $X_{ch}$.  The modular forms we
need are constructed in section \ref{s:eisenstein}, and in section
\ref{s:relation} we compute the polynomial relation they satisfy.
Section \ref{s:symmetric} treats the symmetric Hilbert modular
threefold, and section \ref{s:canonical} contains the proof that
$X_{ch}$ is the canonical model.  Finally, section \ref{s:octic}
describes the octic with $84$ $A_{2}$-singularities.

\section{The cubic field of discriminant $49$}\label{s:F}
In this section we summarize standard facts about the totally real
cubic field $K$ of discriminant $49$ and its ring of integers $\OOO$.

The field $K$ is the maximal totally real subfield of the cyclotomic
field $\QQ (\zeta_{7})$, where $\zeta_7=\exp(2\pi\ii/7)$.  Thus $K$ is
obtained by adjoining to $\QQ$ the number $w=\zeta_7+\zeta_7^{-1}$.
The extension $K/\QQ$ is Galois, with $\Gal (K/\QQ) \simeq \ZZ /3\ZZ$,
and a generator of the Galois group maps $w$ to $w^2-2$.  The norm and
trace of an element $r=a+bw+cw^2$ are given respectively by
\[
\Norm (r)=a^3 -a^2b + 5a^2b -2ab^2 - abc + 6ac^2 + b^3 - b^2c - 2bc^2 + c^3
\]
and 
\[
\Tr(r)=3a-b+5c.
\]
We fix an isomorphism 
\begin{equation}\label{eq:iso}
K\otimes \RR \simeq \RR^{3},
\end{equation}
and for any $a\in K$ denote its image in $\RR^{3}$ as $a\mapsto
(a_{1},a_{2},a_{3})$.  The Galois group cyclically
permutes the $a_{i}$.

The elements $\{1,w,w^{2} \}$ form a $\ZZ$-basis of $\OOO$.  Thus $\OOO$
is isomorphic to $\ZZ[w]/\langle w^3+w^2-2w-1\rangle$. It has class
number $1$, in other words it is a PID. The extension $\OOO /\ZZ$ is
ramified only over the prime $7$. The ideal $\langle 7\rangle \subset
\ZZ$ factors as $p^{3}$, where the prime $p\subset {\mathcal O}$ is
generated by the totally positive element $2-w$ of norm $7$ and trace
$7$.  The full unit group $\OOO^{\times}$ is generated by the elements
$w, w^{2}-1$.  The subgroup of units of norm $1$ is generated by the
pair $w$, $-w-1$, and the subgroup of totally positive units is
generated by $w^2$, $w^2+2w+1$.

In what follows we will also need a list of some totally positive
elements of $\mathcal O$ of small trace.  In fact we only need the
totally positive elements of traces $7$ and $14$, which are shown in
Table \ref{tracetable2}.  We used GP-Pari \cite{pari} to generate the
table.  First, we found a basis of the rank 2 $\ZZ$-module $L_{0}\subset
\OOO$ of elements of trace $0$.  Then we computed bounds for the
intersections of the translates $-nw + L_{0}$, $n=0,1,2,\dotsc $ by
the element $-w$ of trace $1$.

\begin{table}[htb]
\begin{center}
\begin{tabular}{|l||c|c|c|}
\hline
Trace $7$&$-w^2 + 4$&$-w + 2$&$w^2 + w + 1$\\
\hline
&$-5w^2 - 3w + 12$&$-3w^2 - 2w + 9$&$-2w^2 - 3w + 7$\\
&$-2w^2 + 8$&$-w^2 - w + 6$&$-2w + 4$\\
Trace $14$&$-w^2 + 2w + 7$&$w + 5$&$w^2 + 3$\\
&$2w^2 - w + 1$&$3w^2 - 2w - 1$&$w^2 + 3w + 4$\\
&$2w^2 + 2w + 2$&$3w^2 + w$&$2w^2 + 5w + 3$\\
\hline
\end{tabular}
\end{center}
\medskip
\caption{Totally positive elements of $\OOO$ of trace $7$ and $14$}\label{tracetable2}
\end{table}

\section{Resolution of the cusps}\label{s:resolution}
Let $\HH$ be the upper half plane.  We denote elements of the product
$\HH^{3}$ using multi-index notation, i.e.~given $z\in \HH^{3}$ we
write $z= (z_{1},z_{2},z_{3})$. Let $\Gamma = \Gamma (1)$ be the
Hilbert modular group $\SL (2,\OOO)$, and let $\Gamma (p) \subset
\Gamma$ be the principal congruence subgroup of matrices
$\abcd{a}{b}{c}{d}$ with $b,c\in p$ and $a, d = 1 \bmod p$.  Let
$X^{\circ }$ be the Hilbert modular threefold $\Gamma(p)\backslash
\HH^{3}$, where as usual $\Gamma (p)$ acts on $\HH^{3}$ via the three
real embeddings of $K$ in $\RR$ corresponding to \eqref{eq:iso}.  Thus
we have
\[
\Bigl(
\begin{array}{cc}
a&b\\
c&d\\
\end{array}\Bigr) \cdot z =
\Bigl(\frac{a_{1}z_{1}+b_{1}}{c_{1}z_{1}+d_{1}},
\frac{a_{2}z_{2}+b_{2}}{c_{2}z_{2}+d_{2}},
\frac{a_{3}z_{3}+b_{3}}{c_{3}z_{3}+d_{3}}\Bigr), \quad \Bigl(
\begin{array}{cc}
a&b\\
c&d\\
\end{array}\Bigr)\in \Gamma (p), z\in \HH^{3}.
\]
It is easy to see that the group $\Gamma(p)$ has no elliptic elements
(elements of finite order), so that the quotient $X^{\circ}$ is a
smooth complex threefold.  The finite group $G=\Gamma(1)/\Gamma(p)$
acts on $X$, with the center $Z (G)$ acting trivially.  The residue
field $\OOO/p $ is isomorphic to $\FF_7$, hence $G \simeq \SL(2,\FF_7)$.
The action of $G$ on $\HH^{3}$ factors through the quotient $G/Z (G)
\simeq \PSL(2,\FF_7)$, the simple group of order $168$.

Let $X$ be the minimal (Baily--Borel--Satake) compactification of
$X^{\circ}$ obtained by adjoining cusps.  Recall that $X$ is the
quotient of the partial compactification $\overline{\HH}^{3}$ obtained
by adjoining the set of cusps $\PP^{1} (K)$ to $\HH^{3}$ and endowing
the result with the Satake topology.  The variety $X$ is singular;
resolutions of the cusp singularities of Hilbert modular varieties
were described for quadratic fields by Hirzebruch \cite{hirz}, and in
general by Ehlers \cite{ehlers} (see also \cite[Appendix of
III.7]{Freitag}).  The goal of this section is to explicitly describe
the resolution of singularities of the cusps of $X$.

\begin{lemma}\label{lem:cusps}
The Hilbert modular threefold $X$ has $8$ cusps.  The group $G$ acts
transitively on the cusps of $X$.
\end{lemma}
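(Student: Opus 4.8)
\begin{proof*}[Proof proposal]
The plan is to reduce both assertions to a finite computation in $G \simeq \SL(2,\FF_{7})$. By definition the cusps of $X$ are the $\Gamma(p)$-orbits on $\PP^{1}(K)$, so I would first exploit the fact, recorded in Section \ref{s:F}, that $\OOO$ has class number $1$. Because $\OOO$ is a PID, any point of $\PP^{1}(K)$ can be written as $[\alpha:\beta]$ with $\alpha,\beta\in\OOO$ coprime, and B\'ezout then produces $\gamma,\delta\in\OOO$ with $\alpha\delta-\beta\gamma=1$; the matrix $\abcd{\alpha}{\gamma}{\beta}{\delta}\in\Gamma(1)$ carries $\infty=[1:0]$ to $[\alpha:\beta]$. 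Hence $\Gamma(1)=\SL(2,\OOO)$ acts transitively on $\PP^{1}(K)$, giving $\PP^{1}(K)\simeq \Gamma(1)/\Gamma(1)_{\infty}$, where $\Gamma(1)_{\infty}$ is the stabilizer of $\infty$, namely the upper-triangular matrices $\abcd{u}{b}{0}{u^{-1}}$ with $u\in\OOO^{\times}$ and $b\in\OOO$. Consequently the set of cusps is the double coset space $\Gamma(p)\backslash\Gamma(1)/\Gamma(1)_{\infty}$.

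Since $\Gamma(p)$ is normal in $\Gamma(1)$ with quotient $G$, reduction modulo $p$ identifies $\Gamma(p)\backslash\Gamma(1)$ with $G$ and the double coset space with $G/\bar B$, where $\bar B$ is the image of $\Gamma(1)_{\infty}$ in $G$. Clearly $\bar B$ lies in the Borel subgroup $B$ of upper-triangular matrices in $\SL(2,\FF_{7})$, and the main point is to prove the equality $\bar B=B$. The unipotent part is automatic because $\OOO\to\OOO/p=\FF_{7}$ is surjective, so everything comes down to showing that $\OOO^{\times}\to\FF_{7}^{\times}$ is surjective onto the diagonal torus. This surjectivity is the crux of the argument and the one genuinely arithmetic ingredient. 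Using the generators $w,\ w^{2}-1$ of $\OOO^{\times}$ together with the congruence $w\equiv 2\pmod p$ (which follows from $p=\langle 2-w\rangle$), I would compute that $w$ reduces to $2$ and $w^{2}-1$ reduces to $3$ in $\FF_{7}^{\times}$. As $3$ is a primitive root modulo $7$, the image already exhausts $\FF_{7}^{\times}$, whence $\bar B=B$ and $|B|=6\cdot 7=42$.

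The count then follows at once: the number of cusps is $|G/B|=|\SL(2,\FF_{7})|/|B|=336/42=8$. For transitivity, the $G$-action on cusps corresponds under the bijection above to the left-translation action of $G$ on $G/B$, which is transitive; equivalently, since $\Gamma(1)$ already acts transitively on $\PP^{1}(K)$, it acts transitively on the set of $\Gamma(p)$-orbits, and this action factors through $G$. The only delicate step is the unit surjectivity of the previous paragraph: had the image of $\OOO^{\times}$ in $\FF_{7}^{\times}$ been a proper subgroup, $\bar B$ would have been smaller and the cusp count correspondingly larger, so I would take care to carry out that reduction explicitly.
\end{proof*}
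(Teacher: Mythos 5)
Your proof is correct and follows essentially the same route as the paper: the paper identifies the cusps with the nonzero points of $(\OOO/p)^{2}$ modulo units (which is your double coset space $G/\bar B$ unwound) and observes this set is $\PP^{1}(\FF_{7})$, on which $G$ acts transitively. Your write-up is more detailed in that it isolates and verifies the one genuinely arithmetic input — the surjectivity of $\OOO^{\times}\to\FF_{7}^{\times}$ via $w\mapsto 2$, $w^{2}-1\mapsto 3$ — which the paper leaves as "not hard to compute."
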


\begin{proof}
A generalization of the standard argument for the principal congruence
subgroup of $\SL (2,\ZZ)$ (cf. \cite[Lemma 1.42]{shimura}) shows that
the cusps are in bijection with the set of nonzero points in $(\OOO
/p)^{2}$ modulo the induced action of the units.  It is not hard to
compute that the resulting set is isomorphic to $\PP^{1} (\FF_{7})$.
This proves the first statement.

For the second, one observes that action of $G$ on the cusps is
the same as the induced action of $G$ on $\PP^{1} (\FF_{7})$ under the
above bijection.  
\end{proof}

Hence Lemma \ref{lem:cusps} implies that to resolve the cusps it
suffices to look at the cusp corresponding to the image of $(\ii\infty
,\ii\infty ,\ii\infty)$ in $\overline{\HH}^3$.  Before
investigating this cusp, we first describe the cusp singularity and a
toroidal resolution of the unique cusp of $Y = \Gamma(1)\backslash
\overline{\HH}^{3}$, the minimal compactification of the Hilbert
modular threefold attached to the full group $\Gamma (1)$.

The inverse different of the ring $\OOO$ is the fractional
ideal $p^{-2}$.  Let $C\subset p^{-2}\cong \ZZ^3$ be the totally
nonnegative cone.  Namely $C$ consists of the elements of $p^{-2}$
whose images under \eqref{eq:iso} are nonnegative. Clearly, these are
just $0$ and the totally positive elements in $p^{-2}$. The group
$U\cong \ZZ^2$ of totally positive units acts on $C$. The local
(analytic) ring of the cusp for the group $\Gamma(1)$ is given by the
$U$-invariant Fourier series with exponents in $C$ that converge in
some neighborhood of the cusp (cf.~\cite[Theorem 4.1]{Freitag}).

Partial toroidal resolutions of this singularity are described by
rational polyhedral fans in the dual cone $C^*$ in the dual of
$p^{-2}$ under the trace pairing, i.e.~by fans in the totally
nonnegative cone in the lattice $\OOO$.  One needs the fan to be
invariant under the natural action of $U$ on $C^{*}$.  

The resulting resolution will be nonsingular if and only if the fan is
simplicial with each maximal cone generated by a $\ZZ$-basis of
$\OOO$. One such resolution for our singularity was constructed
explicitly by Grundman \cite{Grundman}. However, the goal there was to
compute special values of partial zeta functions using Shintani's
formula \cite{shintani}, not to construct a geometrically natural
resolution.  Hence the resolution found in \cite{Grundman} is not the
most useful for our purposes.

To describe a somewhat better resolution from our perspective, observe
that there is always a natural \emph{partial} resolution of a cusp
singularity: one begins with the convex hull $\Pi$ of the nonzero
points in $C^*\cap \OOO$, and builds the fan whose cones of maximal
dimension are the cones on the facets of $\Pi$.  For quadratic
fields, such fans lead to canonical resolutions of the cusp
singularities of Hilbert modular surfaces.  For fields of higher
degree, the resulting varieties are usually singular; indeed, the fans
one obtains need not be simplicial.

For the cusp of $Y$, we can describe this canonical fan as follows.
We start with the fundamental parallelogram of the action of $U$ as in
\cite[Figure 1]{Grundman}, but then subdivide it by the
\emph{opposite} diagonal (Figure \ref{fig1}).  Note that vertices in
Figure \ref{fig1} are labeled by units in $\OOO$.

\begin{figure}[tbh]
\psfrag{1}{$1$}
\psfrag{w2}{$w^{2}$}
\psfrag{w+2}{$w+{2}$}
\psfrag{w12}{$(w+1)^{2}$}
\begin{center}
\includegraphics[scale = .3]{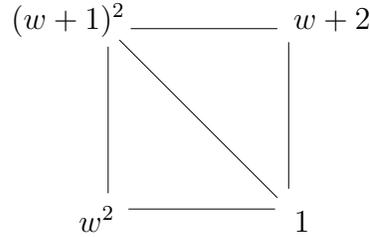}
\end{center}
\caption{The canonical partial resolution $Y_{ch}$.\label{fig1}}
\end{figure}

\begin{proposition}
The translates of two triangles of Figure \ref{fig1} by $U$ are
precisely the facets of the convex hull of the set of totally positive
integers.
\end{proposition}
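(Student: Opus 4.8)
The plan is to verify directly that the two triangles are facets of the convex hull $\Pi$ of the totally positive integers, and then that every facet of $\Pi$ is a $U$-translate of one of them. Write the four labeled vertices as $a_1 = 1$, $a_2 = w^2$, $a_3 = (w+1)^2$, $a_4 = w+2$; all are totally positive integers, and $a_2,a_3,a_4$ are totally positive units, so each lies on $\partial\Pi$. The group $U$ of totally positive units acts on $\OOO$ by multiplication, preserving the totally nonnegative cone and hence permuting the facets of $\Pi$. Since $U\cong\ZZ^2$ acts freely and cocompactly on the boundary surface $\partial\Pi$, there are only finitely many facets up to translation, and it suffices to describe a fundamental domain. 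I would take the parallelogram with vertices $a_1,a_2,a_3,a_4$ as the candidate fundamental domain and its two triangles, cut by the diagonal opposite to the one used in \cite{Grundman}, as the candidate facets $T_1,T_2$.

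To show a triangle $T$ with vertices $b,c,d$ is a facet, I would exhibit its supporting functional: a totally positive $\lambda\in p^{-2}$ (the dual lattice under the trace pairing), normalized so that $\Tr(\lambda b)=\Tr(\lambda c)=\Tr(\lambda d)=1$. Producing such a $\lambda$ and checking its total positivity is a finite linear computation over $K$ once the vertices are fixed. The essential point is then the facet-defining inequality $\Tr(\lambda v)\ge 1$ for every nonzero totally positive $v\in\OOO$, with equality exactly at $b,c,d$. Because $\lambda$ is totally positive, the region $\{v:\Tr(\lambda v)\le 1\}$ meets the totally positive cone in a bounded simplex, so it contains only finitely many lattice points; one can sharpen this using the arithmetic–geometric mean inequality, which gives $\Tr(\lambda v)=\sum_i\lambda_i v_i\ge 3(\Norm(\lambda)\Norm(v))^{1/3}$ together with $\Norm(v)\ge 1$ for a nonzero totally positive integer, bounding $\Norm(v)$ for any hypothetical violator. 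Either way, the verification reduces to checking a finite, explicitly computable list of candidates (modulo $U$), which confirms the inequality and pins the equality set down to $\{b,c,d\}$.

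Finally I would settle the global combinatorics. First, I would check that the four vertices $a_1,a_2,a_3,a_4$ are \emph{not} coplanar, so that the parallelogram genuinely breaks into two distinct triangular facets rather than a single quadrilateral one, and that the chosen diagonal is the one along which $\partial\Pi$ is locally convex; this last point is exactly the assertion that it is the diagonal opposite to Grundman's. Second, I would verify that the $U$-translates of the closed parallelogram $T_1\cup T_2$ tile $\partial\Pi$ with no gaps or overlaps, by confirming that the boundary edges match correctly under the generators $w^2$ and $(w+1)^2$ of $U$, so that $T_1\cup T_2$ is a true fundamental domain for the $U$-action on the boundary surface. Combining the two representatives being facets with the tiling statement shows that their $U$-translates are precisely all the facets of the convex hull.

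The main obstacle is the facet-defining inequality $\Tr(\lambda v)\ge 1$: turning an a priori infinite verification into a finite one is where the arithmetic of $K$ enters, through the norm bound $\Norm(v)\ge 1$ and the boundedness of the simplex cut out by the supporting functional. Once that reduction is in place, the remaining steps—coplanarity of each triangle, non-coplanarity of the full parallelogram (the correct-diagonal check), and the bookkeeping that the two triangles tile $\partial\Pi$ under $U$—are routine.
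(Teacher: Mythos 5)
Your plan is correct and follows the same skeleton as the paper's proof: reduce to showing that the two triangles are facets (the global statement then follows because the $U$-translates of their cones cover $C^*$, which is your tiling step), and establish the facet property by exhibiting the supporting functional and turning the a priori infinite inequality check into a finite one. Where you differ is in the finiteness reduction. You use the boundedness of the simplex $\{\tp{v}, \Tr(\lambda v)\le 1\}$, sharpened by AM--GM and $\Norm(v)\ge 1$; for the first triangle this gives $\Norm(v)\le 2$ for any violator, and since $2$ is inert in $K$ this forces $v$ to be a unit, after which only finitely many units lie in the simplex. The paper instead multiplies by a totally positive element of $p$ (namely $\omega=9-2w-3w^2$, resp.\ $\eta=2-w$), so that $\Tr(r\omega)$ is forced into $7\ZZ$ and a violator must satisfy $\Tr(r\omega)\in\{7,14\}$; the check then reduces to the precomputed Table \ref{tracetable2} of totally positive integers of trace $7$ and $14$ and a divisibility test. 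Both devices are valid; yours is more generic, the paper's exploits the arithmetic of $p$ and yields a cleaner explicit list. Two small corrections: the supporting functional for the triangle $\{1,w^2,(w+1)^2\}$ is $\omega/14$, which lies in $\tfrac12 p^{-2}$ but \emph{not} in the dual lattice $p^{-2}$ (it takes half-integer values on $\OOO$), so you should only require $\lambda$ to be a totally positive element of $K$; and your separate check that the chosen diagonal is the locally convex one is redundant, since verifying that \emph{both} triangles are facets already forces the diagonal. Finally, note that your write-up defers all the finite verifications ("a finite linear computation", "an explicitly computable list"); to be a complete proof these must actually be carried out, which is precisely what the paper's table accomplishes.
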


\begin{proof}
It is clear that the cones spanned by these triangles cover the entire
cone $C^*$. Consequently, it is only necessary to show that each of these
triangles is indeed a face of the boundary. This means that the 
linear functions that equal $1$ on this face are bigger than $1$ on any
other totally positive element of $\OOO$.

Consider first the triangle with vertices $1$, $w^2$, and $(w+1)^{2}$.
The supporting inequality determined by this triangle on any integer
$r=a+bw+cw^2$ is $2a-b+2c\geq 2$. Let $\omega$ be the totally positive
integer $9-2w-3w^2$.  If $r$ is a totally positive element in
$\OOO$ with $2a-b+2c\leq 2$, then $\Tr(r\omega )\leq 14$. Given
that $\omega $ lies in $p$, we see that $\Tr(r\omega )$ is in $p$, so
it has to be $7$ or $14$.

According to Table \ref{tracetable2}, there are three totally positive
integers of trace $7$.  One easily sees that none are divisible by
$\omega $ in $\OOO$.  The table also shows that there are $15$
elements of trace $14$.  Exactly three of them are divisible by
$\omega $, and the ratios are $1$, $w^2$, and $(w+1)^{2}$.  Thus this
triangle is a facet of the convex hull.


The argument for the triangle with vertices $1$, $w+2$, and
$(w+1)^{2}$ is similar.  The supporting inequality on $r=a+bw+cw^2$ is
$a-b+2c\geq 1$.  If $a-b+2c\leq 1$, then $\Tr(r\eta )\leq 7$, where
$\eta = 2-w$. Again $\eta \in p$, which means $\Tr(r\eta )=7$ and
$r\eta \in\{4-w^2,2-w,1+w+w^2\}$. Consequently, $1$, $w+2$, and
$(w+1)^{2}$ are vertices of a facet of the convex hull of totally
positive elements of $\OOO$, which finishes the proof.
\end{proof}

Using the partial resolution we can construct a resolution of the cusp
singularity of $Y$.  The points $1$, $w^2$, $1+2w+w^2$ generate a
sublattice of index $2$ in $\OOO$, and thus the corresponding cone must
be subdivided.  We subdivide all translates of this cone into three
cones. Geometrically this amounts to blowing up the unique singular
point on the partial resolution.  On the other hand the points $1$,
$2+w$, $1+2w+w^2$ form a $\ZZ$-basis of $\OOO$, and thus no blowups are
needed on its translates.  The resulting smooth triangulation is shown
in Figure \ref{fig2}.

\begin{figure}[tbh]
\psfrag{1}{$1$}
\psfrag{w2}{$w^{2}$}
\psfrag{w+2}{$w+{2}$}
\psfrag{w12}{$(w+1)^{2}$}
\psfrag{ww1}{$w^{2}+w+1$}
\psfrag{a}{$\phantom{w^{2}+w+1}$}
\begin{center}
\includegraphics[scale = .3]{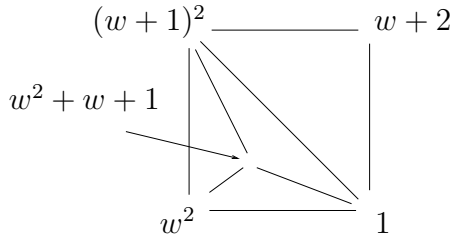}
\end{center}
\caption{The smooth resolution $Y_{sm}$.\label{fig2}}
\end{figure}

Now we want to resolve the singularities of the cusps of $X$.  The
discussion is essentially the same as above, although the lattice and
unit groups change.  The lattice
$\OOO$ needs to be replaced by the lattice $p$.  Define the
subgroup $U_{1}\subset U$ by
\begin{equation}\label{}
U_{1} = \{ u\in U\mid  u=1\bmod p\}. 
\end{equation}
This is a subgroup of index 3 in $U$.  Then we have
$$
\left(\begin{array}{cc} 
u&*\\
0&u^{-1}
\end{array}
\right)\in \Gamma (p)\quad \text{if and only if} \quad u \in  U_{1}.
$$
Moreover, recall that the ideal $p$ is generated by the totally
positive element $2-w$.  Thus the resolution of the cusp of $X$ is
basically the same as that for $Y$.  All one needs to do is to
multiply the vertices of Figures~\ref{fig1} and~\ref{fig2} by $2-w$ to
dilate the lattice $\OOO$ to $p$ (cf.~Remark \ref{nearby}), and then to take $3$ copies of the
domains in Figures~\ref{fig1} and~\ref{fig2} to reflect the smaller
group of units.  The resulting resolutions $X_{ch}$ and $X_{sm}$ are
depicted in Figure \ref{fig3}.

\begin{itemize}
\item The partial resolution $X_{ch}$, which corresponds to taking the
convex hull, is obtained by erasing the $E_{i}$ and the lines
emanating from them. The variety $X_{ch}$ is not smooth: it has 24 isolated $
\ZZ/2\ZZ$ quotient singularities.  The
exceptional divisors of $\pi_{1}\colon X_{ch}\to X$ over each cusp are
$D_1,D_2,D_3$. Since there are $8$ cusps, there are altogether $24$
exceptional divisors.
\item The full resolution $\pi_{2}\colon X_{sm}\rightarrow X$ is given
by the entire Figure \ref{fig3}. It is the blowup of $X_{ch}$ at its
$24$ $\ZZ /2\ZZ $ quotient singularities. Thus the map $\pi_{2}$ has
$48$ exceptional divisors.  Note that there is a natural map
$X_{sm}\to X_{ch}$; the exceptional divisors are $E_1,E_2,E_3$ over
each cusp, and so there are $24$ components overall.

\end{itemize}

\begin{remark}\label{nearby}
The points in Figure \ref{fig3} correspond (left-to-right and then top
to bottom) to elements
$1+2w+w^2,~2+w;~1+w+w^2,~4-w^2;~w^2,~1,~5-w-2w^2;~2-w;~-w+w^2,~3-w-w^2$
of the totally positive cone $C^*$ in $\OOO$. The corresponding
points in the cone of $p$ are given by multiplication of these points
by $2-w$.
\end{remark}

\begin{figure}[tbh]
\psfrag{e1}{$\scriptstyle {E_{1}}$}
\psfrag{e2}{$\scriptstyle {E_{2}}$}
\psfrag{e3}{$\scriptstyle {E_{3}}$}
\psfrag{d1}{$\scriptstyle {D_{1}}$}
\psfrag{d2}{$\scriptstyle {D_{2}}$}
\psfrag{d3}{$\scriptstyle {D_{3}}$}
\begin{center}
\includegraphics[scale = .4]{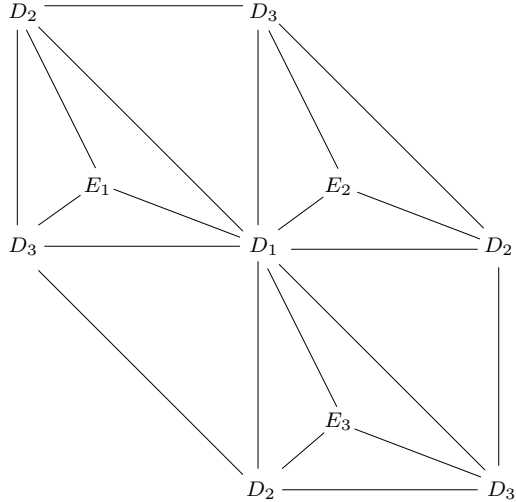}
\end{center}
\caption{Divisors on $X_{sm}$.\label{fig3}}
\end{figure}

We will introduce the following notations. Let $D$ be the sum of the
exceptional divisors of type $D_i$, and let $E$ be the sum of the
exceptional divisors of type $E_i$ on $X_{sm}$. We will also abuse
notation and use the same notation for the divisors $D_i$ on $X_{ch}$
and for their sum.  We will summarize some of the easy facts that
follow from the toric geometry.

\begin{proposition}\label{infdiv}

\phantom{$a$}
\begin{enumerate}
\item The divisor $D+E$ on $X_{sm}$ has simple normal crossings.  
\item Each
component of the divisor $E$ is isomorphic to $\PP^2$, with normal
bundle ${\mathcal O}(-2)$. 
\item The component $D_1$ of the divisor $D$ is
isomorphic to a toric surface with the fan given in the left part of
Figure \ref{fig4}, where the divisor names indicate the intersection
with other components of $D+E$.  
\item The normal bundle of $D_1$ is given
by the line bundle corresponding to the piecewise linear function
on the fan whose values at the generators of one-dimensional cones of
the fan are given in the right part of Figure \ref{fig4}.  
\item The discrepancies are given by
$$
K_{X_{ch}}=\pi_1^*K_X-D,~K_{X_{sm}}=\pi_2^*(K_{ch})+\frac 12 E = (\pi_2\circ \pi_1)^*K_X - D - E.
$$
\end{enumerate}
\end{proposition}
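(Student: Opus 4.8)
All five assertions are local near the cusps, and by Lemma \ref{lem:cusps} the group $G$ permutes the cusps transitively, so it suffices to verify them at the single cusp whose resolution is recorded in Figure \ref{fig3}. Since $\Gamma (p)$ is torsion-free, the unit group $U_{1}$ acts freely and properly discontinuously on the relevant toric chart; hence near the cusp $X_{sm}$ (resp. $X_{ch}$) is the quotient by $U_{1}$ of the toric variety attached to the fan $\Sigma$ of Figure \ref{fig3} (resp. its coarsening obtained by erasing the $E_{i}$-rays, merging the three subdivided cones back into the index-$2$ cone), and the divisors $D_{i}$, $E_{i}$ are the images of the toric divisors $D_{v}$ attached to the rays $v$ listed in Remark \ref{nearby}, one per $U_{1}$-orbit. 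Because every statement below is $U_{1}$-invariant and local, I would prove it on the toric variety $X_{\Sigma}$ and then descend.

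For (1), the maximal cones of $\Sigma$ are unimodular (each is spanned by a $\ZZ$-basis of $p$, as arranged in Figure \ref{fig3}), so $X_{\Sigma}$ is smooth and its toric boundary, the union of the $D_{v}$, has simple normal crossings; descending to the $U_{1}$-quotient preserves this, giving SNC for $D+E$. For (2), I would identify each singular point of $X_{ch}$ as the cyclic quotient singularity $\tfrac{1}{2}(1,1,1)$: in the model of Figure \ref{fig1} the offending cone is $\langle 1, w^{2}, (w+1)^{2}\rangle$, an index-$2$ cone whose unique extra lattice point is $\tfrac12\bigl(1+w^{2}+(w+1)^{2}\bigr)=w^{2}+w+1$. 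Blowing up inserts the ray through this point and subdivides the cone into three unimodular pieces, so the star fan of the new ray has exactly three rays and three two-dimensional cones; hence $E_{i}\cong\PP^{2}$. The normal bundle is $\OOO(-2)$ by the standard computation for $\tfrac12(1,1,1)$: blowing up $\CC^{3}$ first produces $\PP^{2}$ with normal bundle $\OOO(-1)$, and passing to the $\ZZ/2\ZZ$-quotient (which fixes $\PP^{2}$ pointwise and acts by $-1$ on the fibre) squares the transition functions.

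For (3) and (4), recall that the toric divisor $D_{v}$ attached to a ray $v$ is itself the toric variety of the star fan $\mathrm{Star}(v)$ in $N/\ZZ v$, whose rays are the images of the second generators of the two-dimensional cones of $\Sigma$ containing $v$. Reading these adjacencies off Figure \ref{fig3} for $v=D_{1}$ produces the fan in the left part of Figure \ref{fig4}, the label on each ray being the component of $D+E$ met along the corresponding boundary curve of $D_{1}$. For the normal bundle I would use the standard toric description: $N_{D_{1}/X}=\OOO_{X}(D_{1})|_{D_{1}}$ is the torus-equivariant line bundle on $D_{1}$ whose supporting piecewise-linear function is obtained, in each maximal cone $\langle v,u,u'\rangle$, from the functional dual to $v$ (vanishing on $u,u'$ and equal to $1$ on $v$), restricted to $\mathrm{Star}(v)$; its values on the star rays are exactly the numbers in the right part of Figure \ref{fig4}.

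For (5) I would apply the toric discrepancy formula, discrepancy of $D_{v}$ equal to $\psi(v)-1$ for $\psi$ the piecewise-linear function representing the relevant canonical class. For the $E_{i}$ this gives $\tfrac{1+1+1}{2}-1=\tfrac12$, the discrepancy of the blow-up of a $\tfrac12(1,1,1)$ singularity, so $K_{X_{sm}}=\pi_{2}^{*}K_{X_{ch}}+\tfrac12 E$. For the $D_{i}$ I would show each is a log-canonical place, i.e. has discrepancy $-1$ over $X$; this is the direct three-dimensional analogue of the Hilbert modular surface case, where the resolving cycle of rational curves satisfies $K=\pi^{*}K-\sum D_{i}$. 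Concretely, the convex-hull construction places each ray generator on $\partial\Pi$, where the supporting inequalities computed in the preceding proposition show that the piecewise-linear function representing $K_{X}$ takes exactly the value forcing discrepancy $-1$; hence $K_{X_{ch}}=\pi_{1}^{*}K_{X}-D$, and composing the two formulas gives $K_{X_{sm}}=(\pi_{2}\circ\pi_{1})^{*}K_{X}-D-E$. The routine parts are (1), (3), and the star-fan bookkeeping in (2); the main obstacle is fixing the normalizations in (5), namely pinning down the piecewise-linear function attached to $K_{X}$ at the cusp so that the convex-hull rays come out with discrepancy exactly $-1$, rather than merely checking that the fan combinatorics are internally consistent.
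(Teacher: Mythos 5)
Your proposal is correct and follows essentially the same route as the paper: reduce to one cusp, work on the toric model of Figure \ref{fig3} modulo $U_{1}$, get (1) from unimodularity of the cones, (2) from the blow-up of the $\tfrac12(1,1,1)$ point, (3)--(4) from the star fan of the ray $D_{1}$ and the piecewise-linear function for $\OOO(D_{1})|_{D_{1}}$ (your per-cone dual functional differs from the paper's ``subtract the first-coordinate linear function'' only by a global linear function, hence gives the same bundle), and (5) from the standard discrepancy computation that the paper explicitly leaves to the reader. No substantive difference in method.
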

                        
\begin{proof}
The vertices of every triangle in Figure \ref{fig3} correspond to
three different components of $D+E$, which implies the simple normal
crossing statement. The components of $E$ are obtained by blowing up
an isolated $\ZZ/2\ZZ $-quotient singularity on $X_{ch}$; this implies
their description as a $\PP^2$ with normal bundle ${\mathcal O}(-2)$.

To find the structure of the divisor $D_1$ from Figure \ref{fig3}, we
project the surrounding vertices in Figure \ref{fig3} to a dimension
two lattice obtained by modding out the element that corresponds to
$D_1$.  Remark \ref{nearby} then leads to Figure \ref{fig4}. To
calculate the normal bundle $D_1\vert D_1$, observe that in a
neighborhood of $D_1$ the divisor corresponding to the global
linear function that calculates the first coordinate of the points in
Remark \ref{nearby} is trivial.  By subtracting it from $D_1$ we
arrive at the values in the second diagram of Figure \ref{fig4}.

The calculation of discrepancies is standard and is left to the reader.
\end{proof}

\begin{figure}[tbh]
\psfrag{e1}{$\scriptstyle {E_{1}}$}
\psfrag{(-\frac 67E_2)}{$\scriptstyle {E_{2}}$}
\psfrag{e3}{$\scriptstyle {E_{3}}$}
\psfrag{d1}{$\scriptstyle {D_{1}}$}
\psfrag{d2}{$\scriptstyle {D_{2}}$}
\psfrag{d3}{$\scriptstyle {D_{3}}$}
\psfrag{0}{$\scriptstyle{0}$}
\psfrag{-1}{$\scriptstyle{-1}$}
\psfrag{-2}{$\scriptstyle{-2}$}
\psfrag{-3}{$\scriptstyle{-3}$}
\psfrag{-4}{$\scriptstyle{-4}$}
\psfrag{-5}{$\scriptstyle{-5}$}
\begin{center}
\includegraphics[scale = .3]{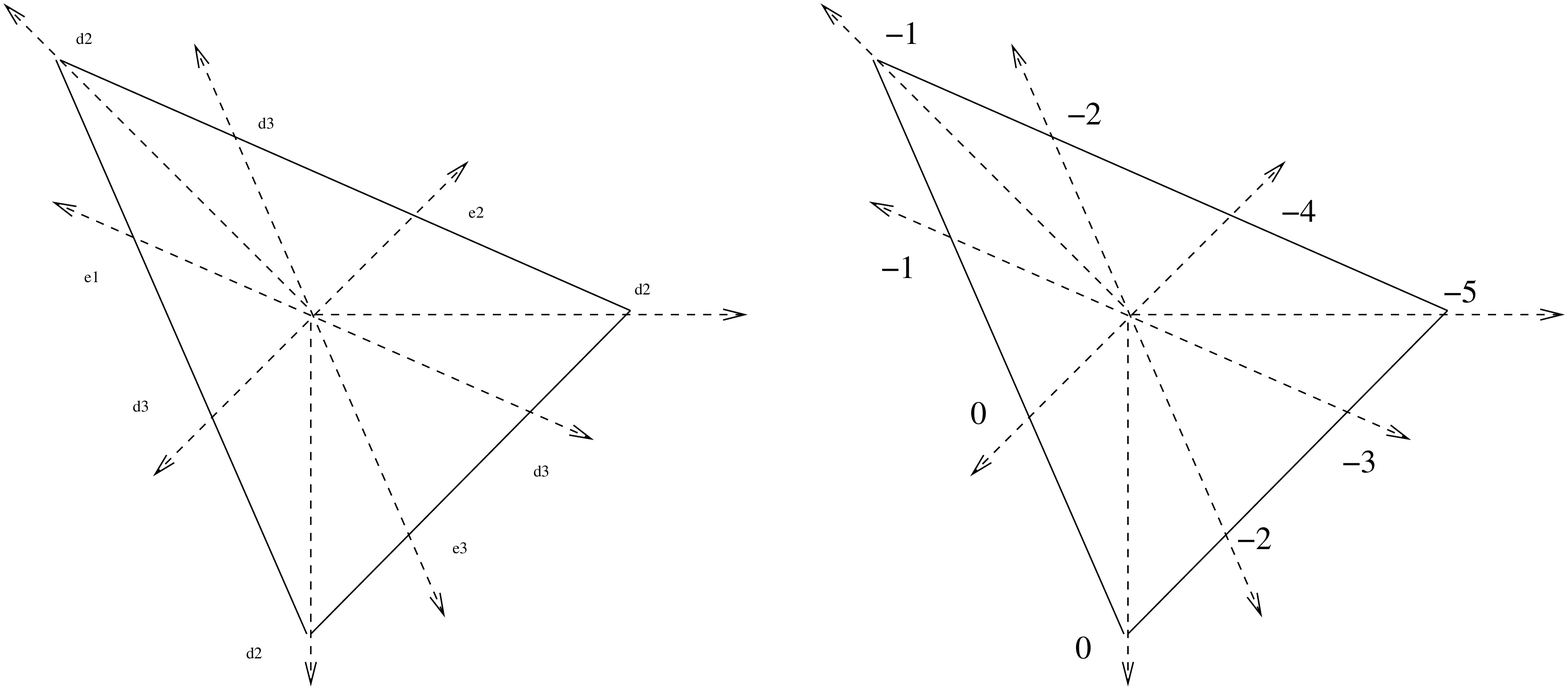}
\end{center}
\caption{The component $D_{1}$ and its normal bundle.\label{fig4}}
\end{figure}

\section{Intersection numbers}\label{s:intersections}
The goal of this section is to calculate various intersection numbers 
on $X_{sm}$ and $X_{ch}$.  For background and basic definitions for
Hilbert modular forms, we refer to \cite{Freitag}.

\begin{definition}
We will denote by $L$ the pullback from $X$ of the line bundle of a
(meromorphic) weight $(1,1,1)$ modular form. We will abuse notation
and denote by $L$ the pullback of this bundle to $X_{sm}$ and to
$X_{ch}$.
\end{definition}

Note that the global sections $H^0(X_{sm},kL)$ can
be naturally identified with weight $(k,k,k)$ modular forms
with respect to $\Gamma(p)$. Similarly, global sections $H^0(X_{sm},kL-D-E)$
are the cusp forms of weight $(k,k,k)$. 

We will need the following proposition: 
\begin{proposition}\label{canon}
\emph{\cite[Lemma 4.6]{Freitag}} The canonical class $K$ of $X_{sm}$ is $2L-D-E$.
\end{proposition}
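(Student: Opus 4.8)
The plan is to reduce everything to a local toric computation at the cusps, using the identification of the holomorphic $3$-form with a parallel weight $2$ modular form. First I would work on the open part $X^{\circ}$, which is smooth because $\Gamma (p)$ is torsion-free. The assignment $f\mapsto f\,dz_1\wedge dz_2\wedge dz_3$ sends a weight $(2,2,2)$ modular form to a $\Gamma (p)$-invariant holomorphic $3$-form, since $dz_i$ transforms with weight $2$ under the $\SL_2$-action, and this identifies $K_{X^{\circ}}$ with $2L\vert_{X^{\circ}}$. Consequently $K_{X_{sm}}-2L$ is represented by a divisor supported on the toroidal boundary, and by Proposition \ref{infdiv} this boundary is exactly the simple normal crossings divisor $D+E$, whose components are the toric divisors attached to the rays of the fans over the eight cusps. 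Thus $K_{X_{sm}}=2L+\sum_{\rho}a_{\rho}D_{\rho}$ for integers $a_{\rho}$, and it remains to show every $a_{\rho}=-1$.

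Next I would fix a cusp and a maximal cone $\sigma$ of its fan, generated by a $\ZZ$-basis $v_1,v_2,v_3$ (the resolution $X_{sm}$ is built precisely so that all maximal cones are smooth). Writing $u_i$ for the toric coordinates dual to this basis, the relation $q^{\mu}=\prod_i u_i^{\langle \mu ,v_i\rangle }$ between Fourier monomials and toric coordinates gives $d\log u_1\wedge d\log u_2\wedge d\log u_3=c\,dz_1\wedge dz_2\wedge dz_3$ for a nonzero constant $c$, so that
\[
dz_1\wedge dz_2\wedge dz_3 \;=\; c^{-1}\,\frac{du_1\wedge du_2\wedge du_3}{u_1u_2u_3}.
\]
The decisive point is the choice of local frame for $L$ near the boundary. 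Because $L$ is pulled back from the minimal compactification $X$, in which each cusp is a single point, the bundle $kL$ is trivialized near the boundary by the constant Fourier term $q^{0}=1$; equivalently, a weight $(k,k,k)$ form $\sum_{\mu\in C}a_{\mu}q^{\mu}$ vanishes on every boundary divisor if and only if $a_0=0$, which matches the already-noted identification of cusp forms with $H^0(X_{sm},kL-D-E)$.

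With this frame, the ratio $\omega_f/f=dz_1\wedge dz_2\wedge dz_3$ is a well-defined meromorphic section of $K_{X_{sm}}-2L$, independent of $f$, and by the displayed formula its divisor is that of the invariant toric form $\tfrac{du_1\wedge du_2\wedge du_3}{u_1u_2u_3}$, namely $-\sum_{\rho}D_{\rho}$ with a simple pole along each toric boundary divisor meeting the chart. Running this over all charts covering all eight cusps shows $a_{\rho}=-1$ for every ray, giving $K_{X_{sm}}-2L=-(D+E)$, hence $K_{X_{sm}}=2L-D-E$. I expect the main obstacle to be precisely the frame identification in the previous paragraph: one must verify that the constant-term trivialization of $L$ is the correct one on $X_{sm}$, and that the invariant top form has a simple (not higher-order) pole along \emph{every} ray, including the divisors $E_i$ introduced when resolving the $\ZZ/2\ZZ$ quotient singularities of $X_{ch}$. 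This can be cross-checked against the discrepancy formula $K_{X_{sm}}=\pi_2^{*}K_{X_{ch}}+\tfrac12 E$ of Proposition \ref{infdiv}: the same argument on the singular toric variety $X_{ch}$ yields $K_{X_{ch}}=2L-D$, and pulling back through the blowup of the index-$2$ cones (where $\pi_2^{*}D=D+\tfrac32 E$) recovers $K_{X_{sm}}=2L-D-E$, confirming the coefficient $-1$ on $E$.
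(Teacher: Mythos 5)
Your argument is correct, but it is worth noting that the paper does not actually prove this statement: it simply cites \cite[Lemma 4.6]{Freitag}, so you have supplied the proof that the authors outsource to the literature. What you give is the standard argument, and all the pieces check out: $\Gamma(p)$ is torsion-free, so $f\mapsto f\,dz_1\wedge dz_2\wedge dz_3$ really does identify $K_{X^{\circ}}$ with $2L\vert_{X^{\circ}}$ (the automorphy factor $\prod_i(c_iz_i+d_i)^2$ exactly cancels the Jacobian of the $\HH^3$-action); the constant Fourier coefficient does trivialize $L$ near the boundary, consistently with the paper's identification of $H^0(X_{sm},kL-D-E)$ with cusp forms; and the invariant form $du_1\wedge du_2\wedge du_3/(u_1u_2u_3)$ has exactly a first-order pole along every ray because the fan of $X_{sm}$ is built so that every maximal cone is generated by a $\ZZ$-basis --- this is precisely why the index-$2$ cones were subdivided, and it is what guarantees the coefficient $-1$ on the $E_i$ as well as on the $D_i$. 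Your cross-check via the discrepancies is also consistent with the paper: $\pi_2^*D=D+\tfrac32E$ and $K_{X_{sm}}=\pi_2^*K_{X_{ch}}+\tfrac12E$ recover $K_{X_{sm}}=2L-D-E$ from $K_{X_{ch}}=2L-D$, matching part (5) of Proposition \ref{infdiv}. The only stylistic caveat is that your first paragraph asserts that $K_{X_{sm}}-2L$ is supported on the boundary; to be fully precise one should say that the nowhere-vanishing section of $K\otimes(2L)^{-1}$ over $X^{\circ}$ extends to a meromorphic section of the line bundle on $X_{sm}$, whose divisor is then necessarily supported on $D+E$ --- which is exactly what your local toric computation then evaluates.
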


\begin{proposition}\label{Euler}
We have 
\[
\chi(X_{sm},kL-D-E)=\chi(X_{sm},kL) = 2(k-1)^3.
\]
\end{proposition}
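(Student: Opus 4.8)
The plan is to prove the two asserted equalities in turn: first that twisting by $-D-E$ leaves the Euler characteristic unchanged, and then that $\chi(X_{sm},kL)=2(k-1)^3$. For the first equality I would tensor the structure sequence of the divisor $D+E$ with $kL$, giving $\chi(X_{sm},kL)-\chi(X_{sm},kL-D-E)=\chi(D+E,(kL)|_{D+E})$. The key point is that $L$ is pulled back from $X$, while every component $D_i,E_i$ of $D+E$, and every mutual intersection of these components, is contracted to a cusp by $\pi_2\colon X_{sm}\to X$. Hence $kL$ restricts to the \emph{trivial} bundle on every toric stratum of the simple normal crossing divisor $D+E$ (Proposition \ref{infdiv}(1)). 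Resolving $\mathcal{O}_{D+E}$ by the standard \v{C}ech complex, whose successive terms are the direct sums of the structure sheaves of the surface components, of the double curves, and of the triple points, and using that on $X_{sm}$ each such stratum is a complete rational toric variety with $\chi(\mathcal{O})=1$, the quantity $\chi((kL)|_{D+E})$ collapses to the alternating count $\#\{\text{surfaces}\}-\#\{\text{curves}\}+\#\{\text{points}\}$.

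Over a single cusp these three counts are precisely the numbers of rays, of $2$-cones, and of $3$-cones of the cusp fan modulo $U$, so their alternating sum is the topological Euler characteristic of the cross-section of $C^*$ modulo $U$. Since $U\cong\ZZ^2$ acts freely, properly, and cocompactly on this cross-section (an open disk), the quotient is a $2$-torus, of Euler characteristic $0$; this is exactly the analogue of the fact that the exceptional divisor over a Hilbert modular \emph{surface} cusp is a cycle of rational curves. Summing over the $8$ cusps gives $\chi((kL)|_{D+E})=0$, whence $\chi(X_{sm},kL-D-E)=\chi(X_{sm},kL)$.

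For the second equality, write $P(k)=\chi(X_{sm},kL)$, a cubic polynomial in $k$ by Hirzebruch--Riemann--Roch. Since $\dim X_{sm}=3$ is odd, Serre duality gives $P(k)=-\chi(X_{sm},K-kL)$; substituting $K=2L-D-E$ (Proposition \ref{canon}) yields $P(k)=-\chi(X_{sm},(2-k)L-D-E)=-\chi(X_{sm},(2-k)L)=-P(2-k)$, the last step by the first equality. Thus $P(1+t)=-P(1-t)$ is odd in $t$, which forces $P(k)=b(k-1)^3+a(k-1)$ for constants $a,b$; this already explains the centered cubic shape of the answer and gives $P(1)=0$ automatically. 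Comparing leading coefficients gives $b=L^3/6$, while $P(0)=-(a+b)=\chi(\mathcal{O}_{X_{sm}})$.

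It remains to pin down the two numerical inputs, and this is where the real work lies. I expect the top self-intersection $L^3$, the arithmetic volume of $X^\circ$, to be the main obstacle: I would compute it from the Shimizu volume formula \cite{shim}, expressing it through $\zeta_K(-1)$ and the index $[\Gamma(1):\Gamma(p)]$, and expect the normalization to yield $L^3=12$, i.e.\ $b=2$. The remaining constant $a$ follows from $\chi(\mathcal{O}_{X_{sm}})$; equivalently, since $L$ is trivial on all strata of $D+E$ one has $L^2\cdot(D+E)=L\cdot(D+E)^2=0$, so $L\cdot c_1^2=L\cdot K^2=4L^3$, and the identity $a=(L\cdot c_2-2L^3)/12$ reduces $a=0$ to the single Chern number $L\cdot c_2(X_{sm})=2L^3=24$, which I would evaluate from the toric description of the exceptional divisors and the intersection numbers of this section. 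Together these give $\chi(\mathcal{O}_{X_{sm}})=-2$, hence $a=0$ and $P(k)=2(k-1)^3$.
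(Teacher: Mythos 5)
Your reduction is attractive and the first half is correct: the normalization sequence for the simple normal crossings divisor $D+E$, together with the triviality of $L$ on every boundary stratum, gives $\chi(kL)-\chi(kL-D-E)=\chi(\mathcal{O}_{D+E})$, and the alternating count of rays, $2$-cones and $3$-cones is indeed the Euler characteristic of a $2$-torus, hence $0$ (over one cusp: $6-18+12=0$). Likewise the functional equation $P(k)=-P(2-k)$ from Serre duality and $K=2L-D-E$ correctly forces $P(k)=b(k-1)^3+a(k-1)$. This is a genuinely different and more geometric route to the \emph{shape} of the answer than the paper's, which instead gets the whole polynomial at once: the Shimizu trace formula gives $\dim H^0(kL-D-E)=\vol\cdot(k-1)^3$ exactly for even $k>2$, because the elliptic contribution vanishes ($\Gamma(p)$ is torsion-free) and the cusp contribution vanishes (for a totally real field of odd degree the action of $-1$ sends each Shimizu $L$-value to its negative); Kawamata vanishing then converts this into the polynomial identity, and Serre duality handles $\chi(kL)$.

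The gap is in your last step, where the real arithmetic content lives. Determining $b$ requires $L^3$, which you correctly trace back to the Siegel/Shimizu volume $\zeta_K(-1)$ computation -- the same input as the paper, fine. But determining $a$ requires $L\cdot c_2(X_{sm})=24$ (equivalently $\chi(\mathcal{O}_{X_{sm}})=-2$), and your proposal to evaluate this ``from the toric description of the exceptional divisors and the intersection numbers of this section'' does not work: $c_2$ of the threefold is a global invariant of the tangent bundle, not determined by the boundary fans, and the only intersection numbers available ($L^3$, $(K-\tfrac12E)^3$) do not touch $c_2$. To make your route rigorous you would need Mumford's proportionality theorem for toroidal compactifications, which gives $\overline{c}_1\overline{c}_2/\overline{c}_1^3=1/2$ for the log-Chern classes (matching $(\PP^1)^3$) and hence, since $\overline{c}_1=-2L$ and $L$ kills all boundary corrections, $L\cdot c_2=24$; alternatively one must invoke the full trace formula as the paper does, where the vanishing of the cusp terms is precisely the arithmetic fact that forces $a=0$. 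As written, the proposal asserts the two decisive numbers rather than proving them, and the suggested method for the second one would fail.
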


\begin{proof}
For even $k>2$, 
we claim 
\begin{equation}\label{eq:dimension}
\dim H^0(X_{sm},kL-D-E) = 2(k-1)^3.  
\end{equation}
This implies both statements.  Indeed, Proposition \ref{canon} and the
Kawamata vanishing theorem \cite{kawa} imply
\[
H^0(X_{sm}, kL-D-E)=\chi(X_{sm},kL-D-E).
\]
This proves the statement about $\chi(X_{sm},kL-D-E)$, since the Euler
characteristic is polynomial in $k$.  Then Serre duality implies
$\chi(kL) = -\chi((2-k)L-D-E) = -2(1-k)^3$, which proves the statement
about $\chi(X_{sm},kL)$.

To prove \eqref{eq:dimension}, we apply the trace formula computations
of Shimizu \cite{shim}.  We have for even $k>2$
\begin{equation}\label{eqn:tracefmla}
\dim H^0(X_{sm},kL-D-E) = \vol (\Gamma (p)\backslash \HH^{3})
(k-1)^{3} + e + c,
\end{equation}
where the volume is computed with respect to a suitably normalized
invariant measure and $e$ (respectively $c$) is a contribution coming
from the elliptic points of $\Gamma (p)$ (resp., the cusps of $\Gamma
(p)$).  Since $\Gamma (p)$ is torsion-free, we have $e=0$.
Furthermore $c$ is essentially the sum of some special values of
Hecke--Shimizu $L$-functions, each of which is attached to a pair
$(M,V)$, where $M\subset K\otimes \RR \simeq \RR^{3}$ is a rank $3$
$\ZZ$-module and $V$ is a subgroup of the units fixing $M$.  The
action of $-1$ on $M$ preserves this data and takes the special value
into its negative (this happens for any totally real field of odd
degree $>1$).  Thus $c=0$, and the only contribution comes from the
volume.  By a theorem of Siegel \cite{siegel1, siegel2} we have 
\[
\vol  (\SL (2,\OOO)\backslash \HH^{3}) = - \frac{\zeta_{K} (-1)}{4},
\]
where $\zeta_{K} (s)$ is the Dedekind zeta function for $K$.  This
special value 
be easily computed, e.g.~using techniques in \cite{shintani,
dedekind}, and we have 
\[
\vol  (\SL (2,\OOO)\backslash \HH^{3}) = \frac{1}{84}.
\]
Since the index of $\Gamma (p)$ in $\SL (2,\OOO)$ equals $\# \SL
(2,\FF_{7}) = 336$ and the center of $SL (2, \OOO)$ acts trivially, we
find 
\[
\vol (\Gamma (p)\backslash \HH^{3}) = 2,
\]
which completes the proof of the claim.
\end{proof}

\begin{proposition}\label{intnumbers}
We have the following intersection numbers on $X_{sm}$:
$$
L^3=12,~(K-\frac 12 E)^3 =36.
$$
\end{proposition}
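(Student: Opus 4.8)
The plan is to read $L^{3}$ off the asymptotics of the Euler characteristic, and then to reduce $(K-\tfrac12 E)^{3}$ to a local toric computation on the exceptional divisors over the cusps. For the first equality, I would use Proposition~\ref{Euler}, which gives $\chi(X_{sm},kL)=2(k-1)^{3}$, a cubic polynomial in $k$ with leading coefficient $2$. By Hirzebruch--Riemann--Roch on the smooth threefold $X_{sm}$, the coefficient of $k^{3}$ in $\chi(X_{sm},kL)$ is $L^{3}/6$; comparing the two expressions gives $L^{3}/6=2$, hence $L^{3}=12$. (The lower-order coefficients are then automatically consistent with Proposition~\ref{canon} and serve as a check.)

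For $(K-\tfrac12 E)^{3}$ I would first rewrite the class using $K=2L-D-E$ (Proposition~\ref{canon}), so that $K-\tfrac12 E=2L-D-\tfrac32 E$. The crucial point is that $L$ is pulled back from $X$, on which every component of $D$ and $E$ is contracted to a cusp; hence $L$ restricts trivially to each exceptional component, and any triple intersection number containing $L$ together with at least one exceptional divisor vanishes. Expanding the cube and discarding those terms leaves
\[
\Bigl(K-\tfrac12 E\Bigr)^{3}=8L^{3}-\Bigl(D+\tfrac32 E\Bigr)^{3}=96-\Bigl(D+\tfrac32 E\Bigr)^{3}.
\]
Since the exceptional loci over the eight cusps are mutually disjoint, $(D+\tfrac32 E)^{3}$ is eight times the contribution of one cusp, and the task becomes the computation of the four exceptional numbers $E^{3}$, $D^{2}E$, $DE^{2}$, $D^{3}$ that occur in the expansion of $(D+\tfrac32 E)^{3}$.

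The numbers involving $E$ come cheaply from adjunction rather than from the fan. By Proposition~\ref{infdiv} each component $E_{i}\cong\PP^{2}$ has normal bundle ${\mathcal O}(-2)$, so $E_{i}^{3}=4$; and from $K_{E_{i}}=(K+E_{i})\vert_{E_{i}}={\mathcal O}(-3)$, together with $L\vert_{E_{i}}=0$ and the disjointness of distinct $E_{j}$, one gets $D\vert_{E_{i}}={\mathcal O}(3)$, whence $D^{2}\cdot E_{i}=9$ and $D\cdot E_{i}^{2}=-6$. Summing over the $24$ components $E_{i}$ yields $E^{3}$, $D^{2}E$, and $DE^{2}$. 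The one genuinely hard ingredient is $D^{3}$: here I would invoke Proposition~\ref{infdiv}(3)--(4), according to which each $D_{i}$ is an explicit compact toric surface whose fan and whose normal bundle (a piecewise-linear function) are recorded in Figure~\ref{fig4}. From this data, $D_{i}^{3}$ is the self-intersection on that surface of the divisor attached to the piecewise-linear function, $D_{i}^{2}D_{j}$ is its intersection with the boundary curve $D_{i}\cap D_{j}$, and $D_{i}D_{j}D_{k}$ equals $1$ exactly when the three rays span a smooth cone of Figure~\ref{fig3}. Assembling these toric numbers over one cusp, multiplying by $8$, and substituting all four numbers into the displayed formula gives $(K-\tfrac12 E)^{3}=36$. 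As a cross-check, Proposition~\ref{infdiv}(5) identifies $K-\tfrac12 E=\pi_{2}^{*}K_{X_{ch}}$ and $D+\tfrac32 E=\pi_{2}^{*}D$ (the boundary on $X_{ch}$), so the answer must also equal $K_{X_{ch}}^{3}=96-D^{3}_{X_{ch}}$.

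The main obstacle is precisely this last toric computation of $D^{3}$. Unlike the $E$-terms, it cannot be shortcut by adjunction, because the $D_{i}$ are nontrivial toric surfaces whose self-intersections depend on the full piecewise-linear normal-bundle data of Figure~\ref{fig4}; moreover one must keep careful track of the $\ZZ/2\ZZ$-quotient structure on $X_{ch}$ and of the half-integral discrepancy $\tfrac12 E$ in the relation $K=\pi_{2}^{*}K_{X_{ch}}+\tfrac12 E$, both of which make the bookkeeping error-prone.
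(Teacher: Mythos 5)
Your computation of $L^{3}=12$ is exactly the paper's: read the leading coefficient of $\chi(X_{sm},kL)=2(k-1)^{3}$ against $\tfrac16 k^{3}L^{3}$ from Riemann--Roch. For $(K-\tfrac12E)^{3}$ your route is correct but organized differently. The paper factors the cube as $(K-\tfrac12E)^{2}\,(2L-\tfrac32E-D)$ and exploits two vanishings: $(K-\tfrac12E)^{2}L=4L^{3}=48$ because $L$ restricts trivially to the boundary, and $(K-\tfrac12E)^{2}E=0$ because $K-\tfrac12E$ is pulled back from $X_{ch}$ while $E$ is $\pi_{2}$-exceptional. This collapses everything to the single toric number $(2L-\tfrac32E-D)^{2}D_{1}=\tfrac52$, computed as the self-intersection of one piecewise-linear function on the hexagonal toric surface $D_{1}$ of Figure \ref{fig4}, giving $96-24\cdot\tfrac52=36$. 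You instead expand $(2L-(D+\tfrac32E))^{3}=8L^{3}-(D+\tfrac32E)^{3}$ (valid, since every monomial containing $L$ and a boundary component vanishes) and evaluate the $E$-contributions by adjunction on the $\PP^{2}$ components ($E_{i}^{3}=4$, $D|_{E_{i}}=\OOO(3)$, hence $D^{2}E_{i}=9$, $DE_{i}^{2}=-6$, using that the $E_{j}$ are pairwise disjoint), leaving $D^{3}$ as the one genuinely toric input. The numbers are consistent: your $E$-terms force $(K-\tfrac12E)^{2}E=0$ and, combined with the target value $36$, require $D^{3}=-33$ per cusp, which is what the fan data of Figure \ref{fig4} yields. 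The trade-off is that your version needs several separate toric numbers ($D_{i}^{3}$, $D_{i}^{2}D_{j}$, $D_{i}D_{j}D_{k}$) where the paper needs only one self-intersection of a PL function, but in exchange your adjunction computations give an independent check on the boundary combinatorics, and your identification $K-\tfrac12E=\pi_{2}^{*}K_{X_{ch}}$, $D+\tfrac32E=\pi_{2}^{*}D_{X_{ch}}$ is precisely the paper's implicit justification for the vanishing it uses.
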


\begin{proof}
The Riemann-Roch formula for $kL$ and Proposition \ref{Euler} give
$2(k-1)^3= \frac 16k^3L^3+\cdots$, which implies $L^3=12$.  To
calculate $(K-\frac 12E)^3$ we will write it as $(K-\frac
12E)^2(2L-\frac 32E-D)$.  Since $L$ restricts trivially to $D$ and
$E$, we have $(K-\frac 12E)^2L=4L^3=48$. Since $(K-\frac 12E)$ is the
pullback of a $\QQ$-Cartier divisor $K_{ch}$ from $X_{ch}$, we have
$(K-\frac 12E)^2E = 0$. So it remains to calculate $(K-\frac
12E)^2D=24(2L-\frac 32E-D)^2D_1$. Since we know the normal bundle of
$D_1$ from Proposition \ref{infdiv}, we conclude that the restriction
of $2L-\frac 32E-D$ to $D_1$ is given by the piecewise linear function
on the left of Figure \ref{fig5}.  The self-intersection is then an
easy calculation in toric geometry.  We get $0-2(\frac 12)^2+2(-\frac
12)(-1)-2(-1)^2+2(-1)(-1)-(-1)^2 +2(-1)(\frac 12)-2(\frac
12)^2+2(\frac 12)2-2(2)^2+2(2)(4)-4^2 +2(\frac 52)4-2(\frac
52)^2+2\frac 52(1) -2(1)^2$, which equals
$\frac 52$. This gives $(K-\frac 12E)^3=96-24(\frac 52)=36$.

\begin{figure}[tbh]
\psfrag{0}{$\scriptstyle{0}$}
\psfrag{-1}{$\scriptstyle{-1}$}
\psfrag{1}{$\scriptstyle{1}$}
\psfrag{-2}{$\scriptstyle{-2}$}
\psfrag{2}{$\scriptstyle{2}$}
\psfrag{-3}{$\scriptstyle{-3}$}
\psfrag{-4}{$\scriptstyle{-4}$}
\psfrag{4}{$\scriptstyle{4}$}
\psfrag{1h}{$\scriptstyle{1/2}$}
\psfrag{5h}{$\scriptstyle{5/2}$}
\psfrag{n1h}{$\scriptstyle{-1/2}$}
\begin{center}
\includegraphics[scale = .3]{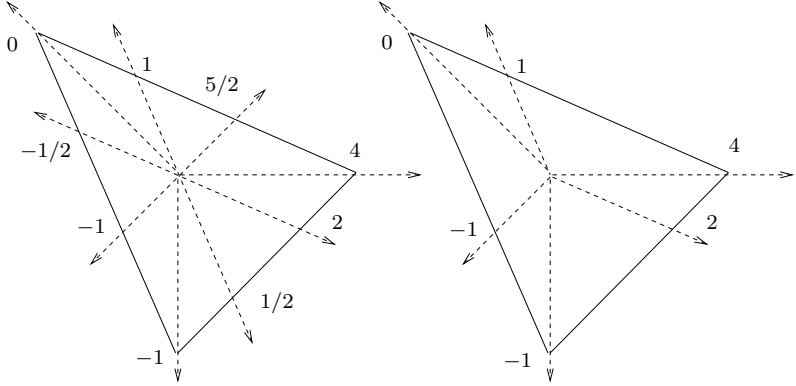}
\end{center}
\caption{The restriction of $2L-\frac 32E-D$ to $D_1$ (proof of
Proposition \ref{intnumbers}), and the toric surface $D_{1}$ on
$X_{ch}$ (proof of Lemma \ref{ampleonD1}).\label{fig5}}
\end{figure}

\end{proof}

We will now calculate the dimensions of the spaces of modular forms of
weight $(k,k,k)$ for various $k$.

\begin{proposition}\label{zeroorfour}
$\phantom{a}$
\begin{enumerate}
\item For $k\geq 3$ we have $\dim H^0(X_{sm}, kL-D-E)=2(k-1)^3$ 
and $\dim H^0(X_{sm}, kL)=2(k-1)^3+8$.
\item For $k=2$ we have $\dim H^0(X_{sm}, 2L-D-E)=3$ and $\dim
H^0(X_{sm}, 2L)=11$.
\item For $k=1$ we have $\dim H^0(X_{sm}, L-D-E)=0$. The dimension of
$H^0(X_{sm},L)$ is either $0$ or $4$.
\end{enumerate}
\end{proposition}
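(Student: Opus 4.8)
The plan is to reduce every dimension count to the Euler characteristic computations of Proposition \ref{Euler} together with the vanishing theorems already invoked there, and then to analyze the three ranges of $k$ separately. For part (1), when $k\geq 3$ the divisor $kL-D-E = K + (k-2)L$ is the canonical class plus a nef (indeed big and nef) divisor, so Kawamata--Viehweg vanishing gives $H^i(X_{sm},kL-D-E)=0$ for $i>0$, whence $\dim H^0(X_{sm},kL-D-E)=\chi(X_{sm},kL-D-E)=2(k-1)^3$ exactly as in Proposition \ref{Euler}. For the dimension of $H^0(X_{sm},kL)$, I would write $kL = (K+E+D) + (k-2)L$ and again apply Proposition \ref{Euler}, which already asserts $\chi(X_{sm},kL)=2(k-1)^3$; the point is then to account for the difference between $H^0(kL)$ and $H^0(kL-D-E)$, i.e.\ the non-cusp-forms. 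I expect this difference to be exactly $8$, the number of cusps, reflecting the constant terms of the Fourier expansions at the $8$ cusps (Lemma \ref{lem:cusps}). Concretely, I would argue that restricting a weight $(k,k,k)$ form to its constant terms along the exceptional divisors $D+E$ at each cusp gives a surjection onto an $8$-dimensional space once $k\geq 3$, with kernel the cusp forms; the surjectivity is where the vanishing $H^1(X_{sm},kL-D-E)=0$ is used.

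For part (2), the value $k=2$ is the boundary case where the clean Kawamata--Viehweg argument degenerates, since $kL-D-E = K$ is exactly the canonical class and $(k-2)L=0$ is no longer strictly positive. Here $H^0(X_{sm},2L-D-E)=H^0(X_{sm},K)$ computes the geometric genus, and $\chi(X_{sm},K)=\chi(\mathcal O_{X_{sm}})$ by Serre duality; I would combine the Euler characteristic $\chi(X_{sm},2L-D-E)=2(2-1)^3=2$ from Proposition \ref{Euler} with the higher cohomology $h^1, h^2$ to extract $h^0=3$. This forces one to control $H^1$ and $H^2$ directly rather than citing vanishing, and I anticipate this is the main obstacle: one must show that $h^0(K) - h^1(K) + h^2(K) = 2$ together with enough independent information (for instance $h^3(K)=h^0(\mathcal O)=1$ and an identification of $H^2(K)\cong H^1(\mathcal O)^\vee$, the irregularity) to pin down $h^0=3$. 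The claim $\dim H^0(X_{sm},2L)=11 = 3+8$ would then follow by the same constant-term exact sequence as in part (1), provided the relevant $H^1$ vanishes at $k=2$.

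For part (3), the weight $k=1$ case, the divisor $L-D-E$ has negative degree against the positive part ($(k-2)L = -L$), so one expects no sections at all; indeed $H^0(X_{sm},L-D-E)$ consists of weight $(1,1,1)$ cusp forms, and I would argue these vanish because $L-D-E = K - L$ with $L$ big and nef, so any section would violate positivity (e.g.\ by restricting to a curve moving in the linear system $|L|$, or by the observation that $\chi$ and the known vanishing force $h^0=0$). The subtler assertion is that $\dim H^0(X_{sm},L)$ is either $0$ or $4$. Since $\chi(X_{sm},L)=2(1-1)^3=0$ by Proposition \ref{Euler}, the constant-term sequence gives $\dim H^0(X_{sm},L)=\dim H^0(X_{sm},L-D-E) + (\text{rank of the constant-term map}) = 0 + r$, where $r$ is the dimension of the image in the $8$-dimensional space of cusp constants. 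The $G$-action of Lemma \ref{lem:cusps} permutes the $8$ cusps transitively, so this image is a $G$-submodule of the permutation module on $8$ points; decomposing that $\PSL(2,\FF_7)$-module into irreducibles (it splits as the trivial representation plus a $7$-dimensional irreducible) shows the only possible dimensions for a subspace cut out by the weight-one relations are $0$ and $4$. The hard part here is justifying that the image is genuinely a $G$-submodule and that the representation theory leaves exactly these two options; I would carry this out by exhibiting the constant-term map as $G$-equivariant and invoking the character table of $\PSL(2,\FF_7)$, deferring the determination of which of $0$ or $4$ actually occurs to the later explicit construction of the Eisenstein series $F_0,F_1,F_2,F_4$ in Section \ref{s:eisenstein}.
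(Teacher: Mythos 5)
Your part (1) is essentially the paper's argument (Kawamata vanishing for $kL-D-E=K+(k-2)L$, then the restriction sequence to $D\cup E$ with $8$ connected components), so no issues there. Part (2), however, is left hanging at exactly the point you identify as ``the main obstacle'': extracting $h^0(K)=3$ from $\chi(K)=2$ requires $h^1(K)=h^2(K)=0$, i.e.\ $h^2(\mathcal O_{X_{sm}})=h^1(\mathcal O_{X_{sm}})=0$, and you never supply this. The paper does not go this route at all: it quotes the dimension formula for parallel weight $2$ cusp forms (Freitag, Theorem II.4.8, a trace-formula computation analogous to Proposition \ref{Euler}) to get $h^0(2L-D-E)=3$ directly, and obtains $h^0(2L)=11$ by adding the $8$ Eisenstein series attached to the cusps rather than by a cohomological exact sequence at $k=2$.

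Part (3) contains the genuine errors. First, the vanishing of $H^0(X_{sm},L-D-E)$ does not follow from positivity: $L-D-E=K-L$, no vanishing theorem applies to $K$ minus a nef and big divisor, and numerically $(L-D-E)\cdot L^2=L^3=12>0$ since $L$ is trivial on $D+E$, so a section would ``violate'' nothing; weight-one forms are precisely the case Riemann--Roch cannot see. The paper's actual argument is representation-theoretic: $H^0(L-D-E)$ is a $G\simeq\SL(2,\FF_7)$-module on which the central involution acts by $-1$ (odd parallel weight), hence is zero or of dimension at least $4$ by the character table; but multiplication by any nonzero section injects it into the $3$-dimensional space $H^0(2L-D-E)$, a contradiction. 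Second, your analysis of $H^0(X_{sm},L)$ uses the wrong module: $H^0(\mathcal O_{D\cup E}(L))$ is not the permutation module $1\oplus 7$ on the cusps (again the center acts by $-1$ there), and even if it were, a $G$-submodule of $1\oplus 7$ has dimension $0,1,7$ or $8$ --- your stated conclusion ``$0$ or $4$'' does not follow from your own decomposition. The correct constraints are: $\dim H^0(L)$ is a sum of $4$'s, $6$'s and $8$'s; the injection into the $8$-dimensional boundary module (valid because $H^0(L-D-E)=0$) rules out $6$; and ruling out $8$ requires a separate argument you omit entirely --- the paper observes that if $\dim H^0(L)=8$ the kernel of $H^0(L)^{\otimes 2}\to H^0(2L)$ has codimension at most $11$ in a $64$-dimensional space and therefore meets the $14$-dimensional Segre variety, producing two nonzero weight-one forms whose product vanishes, which is absurd. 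Without that step the statement ``either $0$ or $4$'' is not proved.
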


\begin{proof}
The statement for $k\geq 3$ about cusp forms follows from the proof of
Proposition \ref{Euler}.  Similarly, for $k=2$, the dimension of
$H^0(X_{sm},2L-D-E)$ can be computed using Riemann-Roch
(cf.~\cite[Theorem II.4.8]{Freitag}); the computations are similar to
those of the proof of Proposition \ref{Euler}.  For even $k\geq 2$
these computations also give the dimension of $H^0(X_{sm},kL)$, since
by Lemma \ref{lem:cusps} the number of cusps of $X_{sm}$ is $8$.

For any $k\geq 3$ Kawamata vanishing implies there is a short exact sequence 
$$
0\to  H^0(X_{sm},kL-D-E)\to   H^0(X_{sm},kL)\to H^0(X_{sm},{\mathcal O}_{D\cup E}(kL))\to 0.
$$
We have ${\mathcal O}_{D\cup E}(kL) = {\mathcal O}_{D\cup E}$.  Hence
the dimension of $H^{0}(X_{sm}, {\mathcal O}_{D\cup E}(kL))$ is the
number of connected components of $D\cup E$, which is 8.  This
completes the proof of (1) and (2).

We turn now to $k=1$.  Let us prove that $H^0(X_{sm},L-D-E)=0$. The
space $H^0(X_{sm},L-D-E)=0$ is a representation of the group
$G\simeq\SL(2,\FF_{7})$ such that the central involution acts by $-1$.
By investigating the character table of $G$ \cite{frobenius,schur}, we
see that the smallest dimension of such a representation is $4$. If
there were a nonzero element in $H^0(X_{sm},L-D-E)=0$, a
multiplication by it shows $\dim H^0(X_{sm},L-D-E)\leq \dim
H^0(2L-D-E)= 3$, which is a contradiction.

Finally, the space $H^0(X_{sm},L)$ is also a representation of $G$
with central involution acting by $-1$. The dimensions of the
irreducible representations of $G$ with this property are $4$, $6$ and
$8$.  There is an injection of $G$-representations $H^0(X_{sm},L)\to
H^0(X_{sm}{\mathcal O}_{D\cup E}(L))$, where the latter has dimension
$8$. This shows that $\dim H^0(X_{sm},L)$ cannot be $6$. Suppose that
$\dim H^0(X_{sm},L)=8$.  Since $\dim H^0(X_{sm},2L)=11$, the kernel of
the multiplication map $H^0(X_{sm},L)^{\otimes 2}\to H^0(X_{sm},2L)$
has codimension at most $11$. Consequently, it contains a nonzero
decomposable tensor, since the dimension of the image of the Segre
embedding of $\PP H^0(X_{sm},L)\times \PP H^0(X_{sm},L)$ into
$\PP(H^0(X_{sm},L)^{\otimes 2})$ is $14$. However, this implies a
product of two non-zero sections of $H^0(X_{sm},L)$ is a non-zero
section of $H^0(X_{sm},2L)$, contradiction.  This leaves $0$ and $4$
as the only possibilities for $\dim H^0(X_{sm},L)$.
\end{proof}

\begin{remark}
We will later show in Corollary \ref{isfour} 
that $\dim H^0(X_{sm},L)=4$ by explicitly exhibiting
a basis of Eisenstein series.
\end{remark}

\section{Eisenstein series}\label{s:eisenstein}
In this section we use Eisenstein series to construct explicitly some
modular forms of weights $(1,1,1)$ and $(2,2,2)$ for the group
$\Gamma(p)$.  Recall that the class number of $K$ is one, and thus
every ideal of $\OOO$ is principal. 

\begin{definition}
For an ideal $(c) \subset {\OOO}$ we define $s(c)=0$ if $p|c$. 
Otherwise, we define $s(c)=\sign(\Norm(c_1))$ where $c_1$ is a generator of 
$(c)$ satisfying $c_1=1\bmod p$.
\end{definition}

We note that $s(c)$ is well-defined, since different generators
$c_1,c_2$ of $(c)$ that are equal to $1\bmod p$ differ by a unit equal
to $1\bmod p$, and all such units have positive norm. Alternatively,
one could define $s(c)$ for $(c,p)=1$ as the product of
$\sign(\Norm(c))$ and the quadratic residue symbol $\big(\frac
cp\big)\in \{\pm 1 \}$ (the latter equals $1$ if and only if the image
of $c$ is a square in $\OOO /p \simeq \FF_{7}$).

\begin{definition}\label{defFi}
For ${z}=(z_1,z_2,z_3)\in \HH^3$ and $a\in K$, we 
define $\Tr(az)=a_1z_1+a_2z_2+a_3z_3$. Then for $i\in \{0,1,2,4\}$ we define
\begin{equation}\label{eq:Fi.expansion}
F_i({z}) = c_i+\sum_{\substack{a\in {\OOO}, \tp{a}\\a =i\bmod p}} \exp(2\pi\ii \Tr(az)/7)
\Big( \sum_{(c)|(a)} s(c)\Big)
\end{equation}
where $c_0=1/{14}$ and $c_1=c_2=c_4=0$. We also define
$$
E_2({z})= -\frac 1{168} + \sum_{\substack{a\in p^{-2}, \tp{a}}}  \exp(2\pi\ii \Tr(az))
\Big( \sum_{(c)|(ap^{2})} |\Norm(c)|\Big)
$$
\end{definition}

Our main result in this section is given by the following theorem:
\begin{theorem}\label{mainE}
For $i\in\{0,1,2,4\}$ the series $F_i({z})$ converges to a weight
$(1,1,1)$ modular form with respect to the group $\Gamma(p)$. The
series $E_2({z})$ converges to a weight $(2,2,2)$ modular form
with respect to the group $\Gamma(1)$.
\end{theorem}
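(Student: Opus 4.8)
The plan is to separate the statement into two claims for each series---convergence of the displayed Fourier series to a holomorphic function on $\HH^{3}$, and the modular transformation law---and to prove the second by identifying the given $q$-expansion with a Hecke--Eisenstein series whose invariance is built into its group-theoretic definition. The convergence claim is the easy part. For totally positive $a$ the exponential $\exp(2\pi\ii\,\Tr(az)/7)$ decays like $\exp\!\bigl(-\tfrac{2\pi}{7}\Tr(a\,\Im z)\bigr)$ uniformly on compact subsets of $\HH^{3}$, while the arithmetic coefficients $\sum_{(c)\mid(a)}s(c)$ and $\sum_{(c)\mid(ap^{2})}\lvert\Norm(c)\rvert$ grow at most polynomially in $\Norm(a)$ by the standard divisor-sum estimate. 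Hence each series converges absolutely and locally uniformly and defines a holomorphic function on $\HH^{3}$. Invariance under the unipotent translations of the relevant group is then immediate: for $\beta\in p$ one has $a\beta\in p=7p^{-2}$, so $\Tr(a\beta)\in\Tr(p)\subseteq 7\ZZ$ and the $F_{i}$ are unchanged under $z\mapsto z+\beta$; the analogous computation $\Tr(a\beta)\in\ZZ$ for $a\in p^{-2}$, $\beta\in\OOO$ handles the translations in $\Gamma(1)$ for $E_{2}$.

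The substance is invariance under matrices with nonzero lower-left entry, and here I would realize the given series as Fourier expansions of honest Eisenstein series. The coefficient $s(c)$ is the value on the ideal $(c)$ of the order-$2$ Hecke (ray-class) character of $K$ of modulus $p\cdot\infty_{1}\infty_{2}\infty_{3}$ whose finite part is the quadratic residue symbol modulo $p$ and whose infinite part is the total sign character $\prod_{j}\sign(\,\cdot\,)$; the shape $\sum_{(c)\mid(a)}s(c)$ is exactly that of a weight-one Eisenstein coefficient twisted by this character, and $\sum_{(c)\mid(ap^{2})}\lvert\Norm(c)\rvert$ is the $\sigma_{1}$-type coefficient of the level-one, parallel weight-two Eisenstein series with trivial character. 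Accordingly I would introduce
\[
E(z,s)=\sum_{(c,d)}\frac{\chi(c,d)\,\Norm(\Im z)^{s}}{\prod_{j}(c_{j}z_{j}+d_{j})^{k}\,\lvert c_{j}z_{j}+d_{j}\rvert^{2s}},
\]
the sum being over orbits of pairs subject to the congruence and unit conditions encoding the level $p$ and the character $s$ (with $k=1$, $\chi$ the character above) for the $F_{i}$, and $k=2$, $\chi$ trivial, for $E_{2}$.

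Because parallel weight one lies strictly below, and parallel weight two exactly at, the threshold of absolute convergence of $\sum_{(c,d)}\prod_{j}\lvert c_{j}z_{j}+d_{j}\rvert^{-k}$ over our rank-$6$ lattice, the factor $\Norm(\Im z)^{s}/\prod_{j}\lvert c_{j}z_{j}+d_{j}\rvert^{2s}$ serves as Hecke's convergence device: $E(z,s)$ converges for $\Re(s)\gg0$, is manifestly modular of the stated weight and level for each such $s$, and I would analytically continue it to $s=0$. The Fourier expansion at $s=0$ is then computed in the standard way, by applying Poisson summation---equivalently the $s$-deformed Whittaker analogue of the Lipschitz formula $\sum_{d}(z+d)^{-k}=\frac{(-2\pi\ii)^{k}}{(k-1)!}\sum_{n>0}n^{k-1}e^{2\pi\ii nz}$---in the $d$-variable at each archimedean place, and regrouping the resulting multiple sum into a single sum over $a$. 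This produces precisely the divisor sums $\sum_{(c)\mid(a)}s(c)$ and $\sum_{(c)\mid(ap^{2})}\lvert\Norm(c)\rvert$ as coefficients, while matching the constant terms pins down $c_{0}=1/14$ and the constant $-1/168$ of $E_{2}$ through the same special values of Dedekind-type zeta and $L$-functions used in the proof of Proposition \ref{Euler}.

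The main obstacle is holomorphy at the special point $s=0$: a priori the continued series carries non-holomorphic Whittaker contributions, and one must show these vanish so that $E(z,0)$ coincides with the holomorphic series of Definition \ref{defFi}. For the weight-one forms $F_{i}$ this is the classical mechanism for \emph{totally odd} characters---the archimedean factors attached to the sign character $\prod_{j}\sign$ annihilate the non-holomorphic terms, forcing the special value to be holomorphic. This is the same parity phenomenon (the odd degree $[K:\QQ]=3$ together with the action of $-1$) that makes the cusp contribution $c$ vanish in the proof of Proposition \ref{Euler}, and I expect it to be the delicate point of the argument. For $E_{2}$ the potential non-holomorphic term is the analogue of the quasimodular correction of the classical $E_{2}$ over $\QQ$; it is absent here because $[K:\QQ]>1$ makes the relevant archimedean integral vanish, so $E_{2}(z,0)$ is genuinely holomorphic. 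Finally, holomorphy at the cusps is automatic by the Koecher principle in dimension $\geq 2$, so the identification of each Fourier series with $E(z,0)$ completes the proof.
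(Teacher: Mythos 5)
Your proposal is correct in outline and rests on the same underlying mechanism as the paper, but it executes it quite differently. The paper does not carry out Hecke's trick itself: it quotes a theorem of Yang (\cite{thyang}, reproduced as Theorem \ref{thm:yang}) that already supplies a \emph{holomorphic} parallel weight-one Eisenstein series attached to the CM extension $L=\QQ(\zeta_{7})/K$, together with its explicit Fourier expansion in terms of $\delta$ and $\rho_{L/K}$. The whole content of Lemma \ref{th} is then combinatorial: the identification $\sum_{(c)\mid(b)}s(c)=\rho_{L/K}(b)$ via class field theory (your ray-class character mod $p\,\infty_{1}\infty_{2}\infty_{3}$ is exactly the character of $L/K$), plus a comparison of constant terms using $L(0,\chi)=2/7$. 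This produces only $F_{0}$, and only as a form for $\Gamma_{1}(p)$; the paper then gets all of $F_{0},F_{1},F_{2},F_{4}$ as forms for $\Gamma(p)$ by conjugating by $\diag(2-w,1)$ (i.e.\ passing to $z'=(z_{1}/(2-w),\dots)$) and splitting the resulting series into eigencomponents of $z\mapsto z+(1,1,1)$. You instead propose to build Eisenstein series with full level-$p$ structure for each residue class directly and to prove the analytic continuation and holomorphy at $s=0$ from scratch. That buys self-containedness, but at the cost of redoing the one genuinely delicate computation — showing that the non-holomorphic Whittaker contributions of the weight-one series vanish at $s=0$ for the totally odd character. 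You correctly name this as the crux and the correct reason (parity of the archimedean factors), but you only assert it; this is precisely the step where Hecke's original treatment contained errors that Yang's paper corrects, so in a complete write-up it cannot be waved through. Your treatment of $E_{2}$ (Hecke's trick, with the quasimodular correction absent because $[K:\QQ]>1$) is the standard argument and matches what the paper delegates to \cite{Freitag}. One small additional point your route would need: an explanation of why only the residues $i\in\{0,1,2,4\}$ (the squares mod $7$ and $0$) occur — in the paper this falls out of the eigenspace decomposition together with the observation that $s(c)=-s(a/c)$ kills the other residue classes.
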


Before proving the theorem, we need to recall some work of Yang
\cite{thyang}, who extended and corrected certain constructions of
Hecke \cite{hecke} for real quadratic fields to all totally real
fields.  First we need some notation.

Let $L$ be a CM field with maximal real subfield $K$.  Let $\OOO$ be the
ring of integers of $K$.  Suppose $[K:\QQ]=d$, and let $\chi $ be the
quadratic Hecke character attached to the extension $L/K$.  Let
$\partial_{K}$ be the different of $K$ and let $d_{L/K}$ be the
relative discriminant.  Let $(\alpha_{v}) \in \prod_{v|d_{L/K}}
F^{\times}_{v}$ be a tuple with $\ord_{v} (\alpha_{v}) = \ord_{v}
(\partial_{K})$.  Let $N$ be a square-free integral ideal of $K$
coprime to $d_{L/K}$.

\begin{theorem}\label{thm:yang}
\emph{\cite[Theorem 1.2]{thyang}}$\phantom{a}$
\begin{enumerate}
\item There is a function $E (z,s;\Phi^{\alpha ,N})\colon
\HH^{d}\times \CC \rightarrow \CC$ that as a function of $s$ is
meromorphic with possibly finitely many poles.
\item 
For $s$ fixed and away from
the poles, $E (z,s;\Phi^{\alpha ,N})$ is a (non-holomorphic) Hilbert modular form of
weight $(1,\dotsc ,1)$, level $d_{L/K}$, and character $\chi$, where
$\chi$ means
\[
\chi \colon (\OOO /d_{L/K}N)^{\times}\rightarrow (\OOO /d_{L/K})^\times
\rightarrow \{\pm 1 \}, \quad \chi (a) = \prod_{v|d_{L/K}} \chi_{v} (a).
\]
\item For $s=0$, the function $E (z,0;\Phi^{\alpha ,N})$ is a
holomorphic Hilbert modular form with Fourier expansion 
\begin{multline}\label{eq:ty.expansion}
E(z,0;\Phi^{\alpha ,N}) = (1+\varepsilon (\alpha ,N)) L (0,\chi) +\\
2^{d}\varepsilon (\alpha ,N)
\sum_{\substack{t\in\partial_{K}^{-1}N\\\tp{t}}} \delta (\alpha t)
\rho_{L/K} (t\partial_{K}N^{-1})\exp (2\pi \ii \Tr (tz)).
\end{multline}
Here $L (s,\chi)$ is the Hecke $L$-function attached to the Hecke
character $\chi$ and the quantity $\varepsilon (\alpha ,N)$ is defined
by 
\[
\varepsilon (\alpha ,N) = (-1)^{o (N)}\prod_{v|d_{L/K}}\chi_{v}
(\alpha_{v})\prod_{v|\partial_{K}, v\nmid d_{L/K}}\chi_{v} (\partial_{K})
\]
where $o (N)$ is the number of prime factors of $N$.  Furthermore
\[
\delta (\alpha t) = \prod_{v|d_{L/K}} (1+\chi_{v} (\alpha_{v}t))
\]
and 
\[
\rho_{L/K} (\fa) = \#\bigl\{\fA\subset \OOO_{L}\bigm| \Norm_{L/K}\fA = \fa \bigr\}
\]
\end{enumerate}
\end{theorem}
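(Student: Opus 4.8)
The plan is to realize $E(z,s;\Phi^{\alpha,N})$ as a weight $(1,\dots,1)$ Eisenstein series built from a section $\Phi^{\alpha,N}$ of the induced (principal series) representation $I(s,\chi)$ for $\SL_2$ over $K$, following Hecke's construction for real quadratic fields but organizing the local data so that the delicate places are isolated. Concretely, I would first write $\Phi^{\alpha,N} = \bigotimes_v \Phi_v$ as a pure tensor over the places $v$ of $K$: at each of the $d$ archimedean places take the weight-$1$ vector, at the finite places coprime to $d_{L/K}N$ take the standard spherical section, at $v \mid d_{L/K}$ take the local section pinned down by the chosen $\alpha_v$ (with $\ord_v(\alpha_v)=\ord_v(\partial_K)$), and at $v \mid N$ take the section producing the level-$N$ and sign data. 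From this I form the classical series $E(z,s;\Phi^{\alpha,N})$ by summing the associated weight $(1,\dots,1)$ automorphy factor, twisted by $\chi$, over the relevant coset space, inserting Hecke's convergence factor $\prod_j |c_jz_j+d_j|^{2s}$ so that the sum converges absolutely and locally uniformly for $\Re(s)$ large. In that region the summand is manifestly a weight $(1,\dots,1)$ cocycle twisted by $\chi$, so $E(z,s;\cdot)$ transforms as a (non-holomorphic) Hilbert modular form of level $d_{L/K}$ and character $\chi$, giving the easy half of part (2).

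For parts (1) and (2) in general I would compute the Fourier expansion in the region of convergence and then continue it. Each Fourier coefficient factors as a product of local Whittaker integrals: an archimedean factor that is a product over the $d$ infinite places of a confluent-hypergeometric Whittaker integral $W_t(y_j,s)$, times a finite arithmetic Dirichlet series that assembles into a ratio involving the Hecke $L$-function $L(2s+1,\chi)$. Since both the archimedean integrals and $L(s,\chi)$ admit meromorphic continuation with only finitely many poles, a uniform estimate shows the continued Fourier series converges and defines the meromorphic continuation of $E(z,s;\cdot)$; its poles are confined to those of the constant term, which is governed by $L(2s+1,\chi)$ and the standard intertwining operator, hence finite in number. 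This yields part (1), and the automorphy propagates to all $s$ by continuation, completing part (2).

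The heart is the specialization at $s=0$ for part (3), which I would evaluate slice by slice in the Fourier expansion. For totally positive $t$ the archimedean Whittaker integral $W_t(y_j,0)$ degenerates to the pure exponential $e^{-2\pi t_j y_j}$, with no non-holomorphic companion surviving precisely because the weight equals $1$ and $s=0$; for $t$ not totally positive the corresponding factor vanishes at $s=0$. Thus the surviving coefficients are supported on totally positive $t$ and combine to the holomorphic exponentials $\exp(2\pi\ii\,\Tr(tz))$, which is the source of holomorphy. The unramified finite local densities at $v \nmid d_{L/K}N$ multiply to the representation number $\rho_{L/K}(t\partial_K N^{-1})$ via the classical Euler-product identity expressing ideals of given relative norm in terms of local factors of $\chi$; the places $v \mid d_{L/K}$ contribute the factor $\delta(\alpha t) = \prod_{v\mid d_{L/K}}(1+\chi_v(\alpha_v t))$, and the $N$-places contribute the sign $(-1)^{o(N)}$, which together with the ramified local constants assemble into $\varepsilon(\alpha,N)$ and the global normalization $2^d$. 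Finally the constant term at $s=0$ is $(1+\varepsilon(\alpha,N))L(0,\chi)$, with the leading $1$ from the identity contribution and $\varepsilon(\alpha,N)L(0,\chi)$ from the intertwining term evaluated at $s=0$ through the functional equation of $L(s,\chi)$. Collecting these contributions reproduces \eqref{eq:ty.expansion}.

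I expect the main obstacle to be the local analysis at the ramified places $v \mid d_{L/K}$, above all at dyadic primes. Evaluating the local Whittaker and density integrals for the specially chosen sections $\Phi^{\alpha}_v$ at $s=0$, and extracting exactly the factor $\delta(\alpha t)$ together with the local constants $\chi_v(\alpha_v)$ feeding into $\varepsilon(\alpha,N)$, is genuinely delicate and is precisely the point where Hecke's original quadratic-field computation was incomplete; getting these local integrals right, and confirming that the archimedean weight-$1$ Whittaker function leaves no non-holomorphic remainder at $s=0$, is where I would concentrate the greatest care.
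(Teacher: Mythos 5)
A point of logic before anything else: the paper does not prove this statement. It is quoted verbatim, with attribution, as \cite[Theorem 1.2]{thyang}, and serves purely as an external input to Lemma \ref{th} (where the authors specialize it to $L=\QQ(\zeta_7)$, $K$ the real cubic subfield, $\alpha=(2-w)^2$, $N=\OOO$). So there is no internal proof to compare your attempt against; the only meaningful benchmark is Yang's own argument, and on that score your outline is a faithful reconstruction of the actual strategy: an adelic Eisenstein series attached to a factorizable section $\Phi^{\alpha,N}=\bigotimes_v\Phi_v$ of a principal series, Fourier coefficients computed as products of local Whittaker integrals, meromorphic continuation controlled by $L(2s+1,\chi)$ and the constant term, specialization at $s=0$ with the negative-frequency archimedean Whittaker values vanishing, and the ramified (especially dyadic) local computations isolated as the place where Hecke's original quadratic-field treatment went wrong. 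That is indeed how Yang organizes the proof.

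The genuine gap is that, as written, your proposal is a program rather than a proof: at every point where the specific content of the theorem lives, you assert that local factors ``assemble into'' the stated quantities instead of computing them. The identities that the unramified local densities multiply out to $\rho_{L/K}(t\partial_K N^{-1})$, that the sections at $v\mid d_{L/K}$ pinned by $\alpha_v$ produce exactly $\delta(\alpha t)=\prod_{v\mid d_{L/K}}(1+\chi_v(\alpha_v t))$, that the global normalization is $2^d$ and the sign data is precisely $\varepsilon(\alpha,N)=(-1)^{o(N)}\prod_{v\mid d_{L/K}}\chi_v(\alpha_v)\prod_{v\mid\partial_K,\,v\nmid d_{L/K}}\chi_v(\partial_K)$, and that the constant term at $s=0$ equals $(1+\varepsilon(\alpha,N))L(0,\chi)$ via the intertwining operator and the functional equation --- these are not bookkeeping to be confirmed after the fact; they \emph{are} the theorem, and each requires an explicit evaluation of a ramified local Whittaker integral for the particular $\Phi_v$ (the dyadic case being the historically delicate one, though note that in the application here $d_{L/K}=p$ lies over $7$, so no dyadic prime actually intervenes). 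You correctly flag this as the main obstacle and say where you would concentrate care, but you do not carry out any of it, nor do you specify the local sections $\Phi_v$ at $v\mid d_{L/K}$ concretely enough that the computation could even be started. In short: right architecture, matching Yang's, but the load-bearing local calculations are named and deferred rather than done.
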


Now let $\Gamma_1(p)$ (resp. $\Gamma_0(p)$) be the subgroup of $\Gamma(1)$
given by the condition that its elements are upper-triangular unipotent
(resp. upper-triangular) modulo $p$.  

\begin{lemma}\label{th}
The series $F_0(z)$ converges to a modular form of weight $(1,1,1)$
for $\Gamma_1(p)$. Moreover, it has
character $\chi$ for $\Gamma_0(p)$.
\end{lemma}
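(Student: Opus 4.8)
The plan is to realize $F_0$, up to a constant, as one of the Eisenstein series $E(z,0;\Phi^{\alpha,N})$ produced by Theorem \ref{thm:yang}, taking for the CM extension the field $L=\QQ(\zeta_7)$ whose maximal totally real subfield is $K$. First I would record the arithmetic of $L/K$: it is quadratic and, being a CM field over a totally real one, is ramified at all three real places; among the finite primes it ramifies only over $7$, so the relative discriminant is $d_{L/K}=p$ (of norm $7$), while by Section \ref{s:F} the different of $K$ is $\partial_K=p^2$. I would then apply Theorem \ref{thm:yang} with the square-free ideal $N=\OOO$ (so $o(N)=0$) and a tuple $\alpha=(\alpha_p)$ with $\ord_p(\alpha_p)=\ord_p(\partial_K)=2$. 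Since $[K:\QQ]=3$, this produces a holomorphic Hilbert modular form of weight $(1,1,1)$, of level $d_{L/K}=p$, and with the quadratic character $\chi$, which on $(\OOO/p)^{\times}\simeq\FF_7^{\times}$ is the Legendre symbol. The convergence and modularity assertions of the lemma will be inherited from this $E(z,0;\Phi^{\alpha,N})$ once $F_0$ is matched to it.

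The next step is to recognize the coefficient weight $s$ of Definition \ref{defFi} as this Hecke character $\chi$, viewed on ideals through the ray class group modulo $p$ times the infinite places. The point to verify is that $s$ is genuinely well defined on ideals, i.e.\ trivial on units, which amounts to $\sign(\Norm(u))=\bigl(\tfrac{u}{p}\bigr)$ on a set of generators of $\OOO^{\times}$. A direct check on the generators $w,\,w^2-1$ of Section \ref{s:F} confirms this: $\Norm(w)=1$ with $w\equiv2\bmod p$ a square mod $7$, and $\Norm(w^2-1)=-1$ with $w^2-1\equiv3\bmod p$ a non-square mod $7$. Thus the archimedean factor $\sign(\Norm(\,\cdot\,))$ built into $s$ is exactly the compensation forced by the ramification of the CM extension at infinity, and $s=\chi$ as ideal characters.

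With this in hand I would compare Fourier expansions. The substitution $t=a/7$ is a bijection between the totally positive $a\in p$ indexing $F_0$ and the totally positive $t\in\partial_K^{-1}N=p^{-2}$ indexing \eqref{eq:ty.expansion}, and it sends $\exp(2\pi\ii\Tr(az)/7)$ to $\exp(2\pi\ii\Tr(tz))$. Writing $(a)=p^{k}\mathfrak b$ with $\mathfrak b$ prime to $p$, the arithmetic factor $\rho_{L/K}(t\partial_KN^{-1})=\rho_{L/K}(p^{k-1}\mathfrak b)$ reduces to $\rho_{L/K}(\mathfrak b)=\sum_{\mathfrak d\mid\mathfrak b}\chi(\mathfrak d)$, since $p$ is ramified in $L$ and hence $\rho_{L/K}(p^{j})=1$; on the other side, because $s$ vanishes on ideals divisible by $p$, the divisor sum $\sum_{(c)\mid(a)}s(c)$ is likewise $\sum_{\mathfrak d\mid\mathfrak b}\chi(\mathfrak d)$. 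It then remains to show that the ramified-prime factors $2^{3}\varepsilon(\alpha,N)$ and $\delta(\alpha t)=1+\chi_p(\alpha_p t)$ contribute only a fixed nonzero scalar on the support of these coefficients, so that Yang's coefficient is a global constant times $\sum_{(c)\mid(a)}s(c)$. Finally I would match constant terms, checking that $(1+\varepsilon(\alpha,N))L(0,\chi)$ equals $c_0=1/14$ after this normalization, which reduces to the special value $L(0,\chi)$ of the Hecke $L$-function. Once all coefficients agree, $F_0$ is a constant multiple of $E(z,0;\Phi^{\alpha,N})$, hence converges and transforms with character $\chi$ under $\Gamma_0(p)$; as $\chi$ is trivial on $\Gamma_1(p)$, this also gives modularity of weight $(1,1,1)$ for $\Gamma_1(p)$, as required (and is the input to Theorem \ref{mainE}).

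The main obstacle is the local analysis at the ramified prime $p$ inside the coefficient comparison: one must pin down $\chi_p(\alpha_p t)$ as $\ord_p(t)$ varies and show that $\varepsilon(\alpha,N)$ and $\delta(\alpha t)$ conspire to produce only an overall scalar, so that the $p$-power in $a$ drops out just as it does in $\sum_{(c)\mid(a)}s(c)$. The constant-term identity through $L(0,\chi)$ is the remaining check needed to fix the normalization and conclude $F_0=(\text{const})\,E(z,0;\Phi^{\alpha,N})$.
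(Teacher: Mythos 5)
Your proposal follows essentially the same route as the paper: both realize $F_0$ (up to a nonzero constant) as Yang's Eisenstein series $E(z,0;\Phi^{\alpha,N})$ for $L=\QQ(\zeta_7)$ with $N=\OOO$ and $\ord_p(\alpha_p)=2$, identify the divisor sum $\sum_{(c)\mid(a)}s(c)$ with $\rho_{L/K}$ via multiplicativity on prime powers, handle the ramified prime through $\varepsilon(\alpha,N)=1$ and $\delta(\alpha t)=2$, and fix the normalization by the constant term $L(0,\chi)=2/7$. The steps you flag as remaining (the local factor at $p$ and the constant-term match) are exactly the verifications the paper carries out, so the plan is sound and matches the published argument.
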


\begin{proof}
Let $L=\QQ (\zeta_{7})$.  Then $L$ is a complex quadratic extension of
$K$; let $\chi$ be the corresponding Hecke character.  The relative
discriminant $d_{L/K}$ of the extension $L/K$ is the prime ideal $p$,
and the different $\partial _{K}$ is $p^{2}$.

Let $\alpha = (2-w)^{2} \in K\subset K_{p}$, and let $N = \OOO $.  We claim that 
the series $E (z) =  E(z,0;\Phi^{\alpha ,N})$ of Theorem \ref{thm:yang} is
nonzero and is a multiple of $F_{0}(z)$.  Indeed, $\varepsilon (\alpha
,N) = 1$ for this data since $\chi$ is quadratic and $o (N)=0$.  Using results of
\cite{dedekind}, we can compute that the constant term of $E (z)$ is
$L (0,\chi) = 2/7$.  Thus 
we must have
\[
E(z,0;\Phi^{\alpha ,N}) \stackrel{?}{=} 16 F_{0} (z);
\]
we will check this by comparing $q$-expansions.  By Theorem
\ref{thm:yang}, this will complete the proof.

The sum in \eqref{eq:Fi.expansion} is taken over all totally positive
$a\in p$, whereas the sum in \eqref{eq:ty.expansion} is taken over all
totally positive $t\in p^{-2}$.  To compare these $q$-expansions, we
set $a=7t$. Thus we need to check that
\[
2 \sum_{(c)|(a)} s(c) = \delta(\alpha a /7) \rho_{L/K}(a/ (2-w)), \quad a\in {p}, \tp{a}.
\]
Note that $\sum_{(c)|(a)} s(c) = \sum_{(c)|(a/ (2-w))} s(c)$, since
the extra summands in the first sum vanish.  The identities we will
prove are
\begin{align}
\sum_{(c)|(b)} s(c) &= \rho_{L/K}(b), \quad b \in \OOO,\label{eq:multiplicative}\\
\delta(\alpha a /7) &= 2 \quad \text{if $a\in {p}, \tp{a}$, and
$\rho_{L/K}(a/ (2-w)) \not = 0$.}\label{eq:delta} 
\end{align}
Since $d=3$, this will complete the proof of the theorem.

Both sides of \eqref{eq:multiplicative} are multiplicative functions,
so it suffices to check them on powers of prime ideals $P^{k}\subset \OOO$.  We have 
\[
\rho_{L/K} (P^{k})= \begin{cases}
k+1&\text{$P$ split in $L$,}\\
1&\text{$P$ inert in $L$, $k$ even,}\\
0&\text{$P$ inert in $L$, $k$ odd,}\\
1&\text{$P$ ramified in $L$}.
\end{cases} 
\]
On the other hand, from class field theory we know $s (P) = 1$
(respectively $-1$) if and only if $P$ is split in $L$ (respectively
inert in $L$).  Since the only prime that ramifies is $p$ and $s (p) =
0$, this proves \eqref{eq:multiplicative} and completes the proof of
the lemma.

\end{proof}

\begin{proofof}{Theorem \ref{mainE}}
We observe that since $p = (2-w)$,
$F_0({z'})$ is a modular form for $\Gamma(p)$, where 
$${z'} =(z_1/(2-w),z_2/(4-w^2),z_3/(1+w+w^2))$$
(the denominators are the Galois conjugates of $2-w$).
Then we have
\begin{align}
F_0({z'}) &= \frac 1{14}+
\sum_{\substack{a\in \OOO , \tp{a}\\ a =0\bmod p}} \exp(2\pi\ii \Tr(az'/7)
\Big( \sum_{(c)|(a)} s(c)\Big)\\
&=\frac 1{14}+
\sum_{a\in {\OOO}, \tp{a}} \exp(2\pi\ii \Tr(az/7)\label{eq:series}
\Big( \sum_{(c)|(a)} s(c)\Big).
\end{align}
Here we used 
$\sum_{(c)|(a(2-w))} s(c)= \sum_{(c)|(a)} s(c)$,
which follows since the additional summands on the left vanish.

We now use the action of ${z} \mapsto {z}+ {(1,1,1)}$ to separate
\eqref{eq:series} into eigenvectors of this action.  This leads to
$F_0,F_1,F_2,F_4$, since it is rather straightforward to see that other
possible values of $a\bmod p$ give series that are identically zero,
since in these cases $s(c) = - s(a/c)$. It is also easy to observe
that the $F_i$ above are nonzero.

The statement for $E_2$ follows from the computations of Eisenstein
series found in \cite{Freitag}.

\end{proofof}

\begin{corollary}\label{isfour}
The space of modular forms of weight $(1,1,1)$ for 
$\Gamma(p)$ has dimension $4$ and basis $\{F_0,F_1,F_2,F_4\}$. As a consequence,
the linear span of $F_i$ is a four-dimensional irreducible representation
of $\SL(2,\FF_7)$.
\end{corollary}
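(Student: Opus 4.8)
The plan is to turn the dimension dichotomy of Proposition~\ref{zeroorfour}(3) into an equality by feeding in the explicit forms built in Theorem~\ref{mainE}. Recall that $H^0(X_{sm},L)$ is canonically the space of weight $(1,1,1)$ modular forms for $\Gamma(p)$, and that Proposition~\ref{zeroorfour}(3) pins its dimension to be either $0$ or $4$. Theorem~\ref{mainE} supplies four such forms $F_0,F_1,F_2,F_4$, each nonzero. So the entire corollary reduces to proving that these four forms are linearly independent: that rules out dimension $0$, forces the dimension to be exactly $4$, and exhibits $\{F_0,F_1,F_2,F_4\}$ as a basis.

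For the independence I would exploit the translation $z\mapsto z+(1,1,1)$, i.e.\ the image in $G$ of the unipotent matrix $\abcd{1}{1}{0}{1}\in\Gamma(1)$, which acts on $H^0(X_{sm},L)$ with trivial automorphy factor and with respect to which each $F_i$ is an eigenvector; this is exactly the eigenspace decomposition already used in the proof of Theorem~\ref{mainE}. On Fourier expansions this translation multiplies the term indexed by $a$ by $\exp(2\pi\ii\,\Tr(a)/7)$, so the eigenvalue attached to $F_i$ is $\exp(2\pi\ii\,\Tr(a)/7)$ for any totally positive $a\equiv i\bmod p$. Since $\Tr$ reduces modulo $7$ to a unit multiple of the residue map $\OOO\to\OOO/p\cong\FF_7$ (concretely $\Tr(r)\equiv 3\bar r\bmod 7$), the four residues $i\in\{0,1,2,4\}$ produce four distinct eigenvalues $\zeta_7^{3i}$. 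Eigenvectors of a single operator belonging to distinct eigenvalues are independent, so the nonzero $F_0,F_1,F_2,F_4$ are linearly independent. This eigenvalue bookkeeping is the only genuinely computational point, and it is essentially recorded already in the proof of Theorem~\ref{mainE}.

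For the concluding assertion I would argue representation-theoretically, reusing the setup from the proof of Proposition~\ref{zeroorfour}. The group $G\simeq\SL(2,\FF_7)$ acts on $H^0(X_{sm},L)$, and since the weight is odd the central involution $-I$ acts by $(-1)^3=-1$. From the character table of $G$ one knows that every irreducible representation on which the center acts by $-1$ has dimension at least $4$ (the possibilities being $4$, $6$, $8$). If the $4$-dimensional space $H^0(X_{sm},L)$ were reducible it would split as a direct sum of nonzero subrepresentations, each again having $-I$ act by $-1$ and hence each of dimension at least $4$, which is impossible in total dimension $4$. Therefore $H^0(X_{sm},L)=\langle F_0,F_1,F_2,F_4\rangle$ is a $4$-dimensional irreducible $\SL(2,\FF_7)$-representation. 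The main obstacle in the whole argument is the linear independence of the $F_i$, i.e.\ the distinctness of the translation eigenvalues; once that is in hand, the dimension statement and the irreducibility both follow formally from the earlier propositions.
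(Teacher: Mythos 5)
Your proposal is correct and follows essentially the same route as the paper: the dichotomy from Proposition~\ref{zeroorfour}(3) plus linear independence of the nonzero $F_i$ as eigenvectors of $z\mapsto z+(1,1,1)$ with distinct eigenvalues $\zeta_7^{3i}$. Your explicit verification that $\Tr(a)\equiv 3\bar a\bmod 7$ and your spelling-out of the irreducibility via the central character are just slightly more detailed versions of what the paper leaves implicit.
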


\begin{proof}
By Proposition \ref{zeroorfour} we know that the dimension of this 
space is either zero or four. It remains to check that the $F_i$ are 
linearly independent, but this is obvious, since $F_{i}$ is an eigenvector
of the action ${z}\mapsto {z}+(1,1,1)$ with 
eigenvalue $\zeta_7^{3i}$.
\end{proof}

\begin{remark}\label{qexp}
We can represent the modular forms for $\Gamma(p)$ as
power series in variables $q_1,q_2,q_3$ by writing $$\exp(2\pi\ii
\Tr(az)/7) = q_1^{\Tr(a (2-w)/7)}q_2^{\Tr(a(4-w^{2})/7)}
q_3^{\Tr(a(1+w+w^2)/7)}.$$  In these coordinates the expansions
are symmetric under cyclic permutation of the subscripts of the
$q_{i}$, and so in the following we put 
\[
q (a_{1},a_{2},a_{3}) = \delta \sum_{\sigma \in \ZZ /3\ZZ} \prod q_{\sigma (i)}^{a_{\sigma (i)}},
\]
where $\delta =1/3$ if $a_{1}=a_{2}=a_{3}$, and is $1$ otherwise.
With these notations, the Eisenstein series
become 
\begin{multline*}
F_{0}=1/14 +
q( 2, 2, 3)+
q( 2, 5, 7)+
q( 3, 5, 6)\\+
q( 3, 6, 5)+
q( 3, 7, 11)+
2q( 4, 10, 14)+
2q( 4, 4, 6)+\dotsb 
\end{multline*}
\begin{multline*}
F_{1} = q( 1, 1, 1)+
2q( 2, 4, 4)+
q( 2, 3, 5)+
q( 2, 6, 9)+
2q( 3, 3, 4)\\
+
2q( 3, 6, 8)+
2q( 4, 11, 16)+
2q( 4, 5, 8)+
2q( 4, 6, 7)\\
+
2q( 4, 7, 6)+
2q( 4, 9, 11)+
3q( 4, 8, 12)+\dotsb 
\end{multline*}
\begin{multline*}
F_{2} = q( 1, 2, 3)+
2q( 2, 2, 2)+
q( 2, 5, 6)+
2q( 3, 4, 6)+
2q( 3, 5, 5)\\
+
2q( 3, 7, 10)+
2q( 4, 10, 13)+
2q( 4, 12, 18)+
2q( 4, 4, 5)\\
+
2q( 4, 6, 10)+
2q( 4, 7, 9)+
2q( 4, 9, 14)+
3q( 4, 8, 8)+\dotsb 
\end{multline*}
\begin{multline*}
F_{4} = q( 1, 2, 2)+
2q( 2, 4, 6)+
2q( 3, 4, 5)+
2q( 3, 5, 4)+
2q( 3, 7, 9)\\
+
q( 3, 9, 14)+
2q( 4, 10, 12)+
2q( 4, 6, 9)+
2q( 4, 7, 8)\\
+
2q( 4, 8, 7)
+
2q( 4, 9, 13)+
3q( 4, 4, 4)+\dotsb 
\end{multline*}
\begin{multline*}
E_{2} = -1/168 +
q( 2, 2, 3)+
q( 2, 5, 7)+
8q( 3, 5, 6)+
q( 3, 6, 5)\\
+
q( 3, 1, 11 )+
9q( 4, 4, 6)+
14q( 4, 5, 5)+
14q( 4, 7, 10)\\
+
14q( 4, 8, 9)+
9q( 4, 10, 14)+ \dots 
\end{multline*}

\end{remark}

\section{Relation among $F_0,F_1,F_2,F_4,E_2$}\label{s:relation}
In this section we will find a polynomial relation among the Eisenstein
series $F_i$ and $E_2$. 
We begin by calculating the action of the group $G = \Gamma(1)\slash
\Gamma(p)\cong \SL(2,\FF_7)$ on the linear span of the $F_i$. Notice that this
group is generated by the elements $g_7$ and $g_4$ given by
\[
g_{7}\colon z \mapsto z +(1,1,1), \quad g_{4}\colon
(z_1,z_2,z_3)\mapsto \big( -\frac 1{z_1},-\frac 1{z_2},-\frac 1{z_3}\big);
\]
the element $g_{k}$ has order $k$.

First, the action of $g_{7}$ is given by $g_7 (F_i)=
\zeta_{7}^{3i}F_i$ for all $i$, which can be seen directly from
Definition \ref{defFi}.

Next, to calculate the action of $g_4$, we consider the
restrictions $\bar F_i(\tau)$ of $F_i(z)$ to the diagonal
$z=(\tau,\tau,\tau),\tau\in\HH$. These are clearly modular forms of
weight $3$ for the principal congruence subgroup $\Gamma(7)$ of
$\SL(2,\ZZ)$.  The action of $g_4$ can be recovered from the action of
$\tau\mapsto -1/\tau$ on $\bar F_i$.

First we consider $\bar F_0(\tau)$.  The calculation simplifies here,
since $\bar F_0(\tau)$ is invariant under $\tau\mapsto \tau+1$ and is
thus a modular form of weight $3$ for the larger group
$\Gamma_1(7)$. The ring of modular forms for $\Gamma_1(7)$ is easy to
describe in terms of weight one Eisenstein series \cite{toric}. We will also need
an explicit action of the Fricke involution on these weight one forms.

\begin{proposition} \emph{\cite[Theorem 4.11]{toric}}
Let $q=\exp(2\pi\ii\tau)$. For $a=1,2,3$ let $s_{a}$ be the series 
$$s_a(\tau)=\frac 12-\frac a7+\sum_{n>0}q^n\sum_{d\vert
n}(\delta_{d}^{a\bmod7}-\delta_{d}^{-a\bmod7}),$$ where
$\delta^{\alpha}_{\beta}$ takes the value $1$ if $\alpha$ is the
residue $\beta$, and is $0$ otherwise.  Then $\{s_{1},s_{2},s_{3} \}$
forms a basis of the space of modular forms of weight one for
$\Gamma_1(7)$.
\end{proposition}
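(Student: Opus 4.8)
The plan is to establish three things: that each $s_a$ is a holomorphic weight-one modular form for $\Gamma_1(7)$, that the three series are linearly independent, and that $\dim M_1(\Gamma_1(7)) = 3$. Together these identify $\{s_1,s_2,s_3\}$ as a basis. The first two points are essentially formal; the dimension count is the real content.

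For modularity I would realize each $s_a$ as a linear combination of the standard weight-one Eisenstein series attached to the odd Dirichlet characters modulo $7$. The coefficient of $q^n$ in $s_a$ is $\sum_{d\mid n} f_a(d)$, where $f_a(x) = \delta_x^{a\bmod 7} - \delta_x^{-a\bmod 7}$ is an odd function on $\ZZ/7\ZZ$. Finite Fourier analysis on $(\ZZ/7\ZZ)^\times$ expands $f_a = \frac13\sum_{\chi}\overline{\chi(a)}\,\chi$, the sum running over the three odd characters $\chi$ modulo $7$; the even characters drop out precisely because $f_a$ is odd. Hence $s_a = \frac13\sum_\chi \overline{\chi(a)}\,E_\chi$, where $E_\chi(\tau) = c_\chi + \sum_{n\ge1}\big(\sum_{d\mid n}\chi(d)\big)q^n$ is the classical Eisenstein series, a weight-one form on $\Gamma_0(7)$ with nebentypus $\chi$ and constant term $c_\chi$ proportional to $L(0,\chi) = -B_{1,\chi}$. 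Summing over $\chi$ lands $s_a$ in $M_1(\Gamma_1(7))$, and a short computation with generalized Bernoulli numbers checks that the resulting constant term is exactly $\frac12 - \frac a7$.

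Linear independence I would read off the low-order $q$-coefficients. For $a=1,2,3$ the coefficients of $q^1,q^2,q^3$ are $(1,1,1)$ for $s_1$, $(0,1,0)$ for $s_2$, and $(0,0,1)$ for $s_3$, since among $1,2,3 \bmod 7$ none is congruent to the corresponding $-a$. This triangular pattern forces the three series to be independent.

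It remains to show $\dim M_1(\Gamma_1(7)) = 3$, which I regard as the crux. I would decompose $M_1(\Gamma_1(7)) = \bigoplus_\chi M_1(\Gamma_0(7),\chi)$ over Dirichlet characters modulo $7$; only odd $\chi$ contribute in weight one, and there are exactly three. For each such $\chi$ the Eisenstein subspace is one-dimensional: the only factorizations of $\chi$ into characters of conductors dividing $7$ are $(1,\chi)$ and $(\chi,1)$, and in weight one the Eisenstein series attached to $(\chi_1,\chi_2)$ and $(\chi_2,\chi_1)$ coincide, because their $n$-th coefficients $\sum_{d\mid n}\chi_1(n/d)\chi_2(d)$ are symmetric under $d\mapsto n/d$. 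This gives three Eisenstein series, and I would finish by showing $S_1(\Gamma_1(7)) = 0$. The main obstacle is exactly this cuspidal vanishing: weight one lies below the range where Riemann--Roch or a Kawamata-type vanishing theorem controls dimensions, so I cannot simply compute as in Proposition \ref{Euler}. Instead I would invoke the Deligne--Serre correspondence: a nonzero form in $S_1(\Gamma_1(7))$ would produce an odd irreducible two-dimensional Artin representation of conductor $7$, with projective image dihedral, tetrahedral, octahedral, or icosahedral. The dihedral case forces a quadratic field ramified only at $7$, namely $\QQ(\sqrt{-7})$, whose class number is $1$, leaving no suitable ring class character of level $7$; and no exotic representations occur at such small conductor. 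Hence $S_1(\Gamma_1(7)) = 0$, the space is purely Eisenstein of dimension $3$, and $\{s_1,s_2,s_3\}$ is a basis.
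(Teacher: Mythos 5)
The paper gives no proof of this proposition at all: it is quoted directly from \cite[Theorem 4.11]{toric}, so there is no internal argument to compare yours against. Your proof is correct and self-contained. The finite Fourier expansion $f_a=\frac13\sum_{\chi}\overline{\chi(a)}\chi$ over the three odd characters mod $7$ is right (the even characters drop out because $f_a$ is odd), the constant term $\frac12-\frac a7$ does fall out of $c_\chi=-\frac12 B_{1,\chi}$, the triangular pattern of the coefficients of $q,q^2,q^3$ gives independence, and the Eisenstein count --- three odd nebentypus characters, one series each after the weight-one identification of the pairs $(1,\chi)$ and $(\chi,1)$ --- is accurate. The one place you invoke heavier machinery than necessary is the cuspidal vanishing: Deligne--Serre together with the classification of odd Artin representations of tiny conductor does work, but it imports nontrivial facts (class number of $\QQ(\sqrt{-7})$, nonexistence of exotic representations of conductor $7$) that deserve citations. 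A cheaper route is to note that a nonzero $f\in S_1(\Gamma_1(7))$ would give $0\neq f^2\in S_2(\Gamma_1(7))$, and the latter space vanishes because $X_1(7)$ has genus $0$; so $S_1(\Gamma_1(7))=0$ follows from the genus alone. Either way $\dim M_1(\Gamma_1(7))=3$ and your conclusion stands.
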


\begin{remark} Explicitly,
$$
\begin{array}{l}
s_1=1/2-1/7+q+q^2+q^3+q^4+q^5+q^7+2q^8+q^9+q^{10}+q^{11}+q^{14}+2q^{15}+O(q^{16}),\\
s_2=1/2-2/7+q^2+q^4-q^5+q^6+q^8+q^9+q^{14}-q^{15}+O(q^{16}),\\ 
s_3=1/2-3/7+q^3-q^4+q^6-q^8+q^9+q^{10}-q^{11}+q^{15}+O(q^{16}).
\end{array}
$$
In addition, these $s_i$ satisfy the equation $7
(s_1^2+s_2^2+s_3^2)=5(s_1+s_2-s_3)^2$, which establishes an
isomorphism of $X_1(7)$ with a conic in $\PP^2 (\CC)$.  For more
details see \cite{modular}.
\end{remark}

\begin{proposition}\label{restr0}
The restriction $\bar F_0(\tau)$ of $F_0({z})$ is given by
\begin{equation}\label{eq:f0}
\bar F_0=7(s_1+s_2-s_3)^3-147s_1s_2s_3.
\end{equation}

\end{proposition}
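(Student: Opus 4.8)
The plan is to use that, as just observed, $\bar F_0(\tau)$ is a weight-$3$ modular form for $\Gamma_1(7)$, together with the quoted structure of the graded ring of modular forms for this group. Since that ring is generated by the weight-one forms $s_1,s_2,s_3$ subject only to the quadratic relation $7(s_1^2+s_2^2+s_3^2)=5(s_1+s_2-s_3)^2$, its weight-$3$ piece is spanned by the cubic monomials in the $s_a$; the Hilbert series $(1+t)/(1-t)^2$ of the quotient ring shows this piece has dimension $7$. In particular $\bar F_0$ is \emph{some} cubic in $s_1,s_2,s_3$, and the right-hand side of \eqref{eq:f0} is manifestly such a form; thus proving \eqref{eq:f0} amounts to showing that two explicit weight-$3$ forms for $\Gamma_1(7)$ coincide.

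I would establish this by comparing $q$-expansions. On the one hand, restricting Definition \ref{defFi} to $z=(\tau,\tau,\tau)$ replaces each term $\exp(2\pi\ii\Tr(az)/7)$ by $q^{\Tr(a)/7}$ with $q=\exp(2\pi\ii\tau)$; evaluating $\Tr$ on a $\ZZ$-basis of $p$ shows $\Tr(a)\in 7\ZZ$ for every $a\in p$, so these are honest nonnegative integer powers, and the coefficient of $q^{n}$ is $\sum s(c)$ over totally positive $a\in p$ with $\Tr(a)=7n$ and $(c)\mid (a)$. For small $n$ the relevant $a$ are exactly the totally positive elements of trace $7$ and $14$ tabulated in Table \ref{tracetable2}, and the inner divisor sums are evaluated through the multiplicativity identity \eqref{eq:multiplicative}, i.e.\ via the splitting of the relevant primes in $L=\QQ(\zeta_7)$. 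On the other hand, I would expand $7(s_1+s_2-s_3)^3-147s_1s_2s_3$ directly from the series for $s_1,s_2,s_3$ recorded in the remark above.

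Finally I would invoke a Sturm-type bound: because $\Gamma_1(7)$ has index $24$ in $\SL(2,\ZZ)$ and the cusp at $\infty$ has width $1$, two weight-$3$ forms whose expansions agree through the coefficient of $q^{6}$ are equal, so the proposition reduces to a finite check on the first seven coefficients. I expect the only genuine labor to be the arithmetic evaluation of the divisor sums $\sum_{(c)\mid (a)}s(c)$ entering $\bar F_0$, which requires factoring the ideals $(a)$ for the small-trace totally positive $a\in p$ and reading off $s$ on each prime factor via the split/inert dichotomy in $\QQ(\zeta_7)/K$. The particular constants $7$ and $-147$ are then forced: the diamond-operator action of $G$ on $X_1(7)$, which sends $(s_1,s_2,s_3)\mapsto(s_2,-s_3,-s_1)$ and fixes both $(s_1+s_2-s_3)^3$ and $s_1s_2s_3$, confines $\bar F_0$ to the invariant subspace of cubics, inside which matching the leading coefficients pins down the answer; a useful consistency check is that the constant terms already agree, both equaling $1/14$.
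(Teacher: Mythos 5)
Your argument is correct and is essentially the paper's own proof: both reduce the identity to the observation that the difference of the two sides is a weight-$3$ form for $\Gamma_1(7)$, and that such a form vanishing at $\ii\infty$ to order greater than $6$ (the degree of the weight-$3$ line bundle on $X_1(7)$) must be identically zero, so only finitely many $q$-coefficients need to be checked. The one quibble is that $24$ is the index of $\Gamma_1(7)$ in $\PSL(2,\ZZ)$ rather than in $\SL(2,\ZZ)$ (where it is $48$); your resulting bound $3\cdot 24/12=6$ is nevertheless the correct one.
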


\begin{proof}
The line bundle of weight three forms on $X_1(7)$ has degree $6$.  One
can compute that the difference of the left and right sides of
\eqref{eq:f0} vanishes at $\ii\infty$ to order more than $6$, which
means the difference vanishes.
\end{proof}

We now calculate the Fricke involution of $\bar F_0(\tau)$.  We will
only require a few terms in the resulting $q$-expansion.
\begin{proposition}\label{frickebar}
We have
\begin{multline}\label{eq:qexpoffrickebar}
\frac 1{\tau^3} \bar F_0(-\frac 1{7\tau}) = \\ 
49\ii \sqrt{7}
(\frac 1{14}+q^3+3q^5+5q^6+3q^7+15q^{10}+21q^{12}+21q^{13}+15q^{14} + O(q^{15})).
\end{multline}
\end{proposition}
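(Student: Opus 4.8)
The plan is to reduce the whole computation to the action of the Fricke involution $W_7\colon \tau\mapsto -1/(7\tau)$ on the weight-one forms $s_1,s_2,s_3$, and then to exploit the fact that $\bar F_0$ is an explicit cubic polynomial in them. With the weight-$k$ slash normalization $(f|_k W_7)(\tau)=7^{-k/2}\tau^{-k}f(-1/(7\tau))$, the left-hand side of \eqref{eq:qexpoffrickebar} is exactly $7^{3/2}(\bar F_0|_3 W_7)(\tau)$. Because slashing is multiplicative across weights, $(fg)|_{k+l}W_7=(f|_kW_7)(g|_lW_7)$, and because Proposition \ref{restr0} gives $\bar F_0=7(s_1+s_2-s_3)^3-147\,s_1s_2s_3$, I can compute $\bar F_0|_3 W_7$ by substituting the forms $s_a|_1W_7$ into this very polynomial. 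So the problem splits into: (i) find $s_a|_1W_7$, and (ii) expand the resulting cubic.

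For (i), observe that $W_7$ preserves the three-dimensional space of weight-one forms for $\Gamma_1(7)$, so $s_a|_1W_7=\sum_b M_{ab}s_b$ for a fixed matrix $M$; this is the ``explicit action of the Fricke involution'' promised above and can be read off from \cite{toric}. The cleanest way to pin it down is to diagonalize. By orthogonality of Dirichlet characters, $s_a=\tfrac13\sum_{\chi}\bar\chi(a)E_\chi$, where $\chi$ runs over the three odd characters mod $7$ (the quadratic character $\big(\tfrac{\cdot}{7}\big)$ and the two conjugate sextic characters) and $E_\chi=\mathrm{const}+\sum_{n>0}\big(\sum_{d\mid n}\chi(d)\big)q^n$ is the attached weight-one Eisenstein series. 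Each $E_\chi$ is carried by $W_7$ to a Gauss-sum multiple of $E_{\bar\chi}$; the Gauss sums mod $7$ all have absolute value $\sqrt7$, and the quadratic one equals $\ii\sqrt7$. Thus $W_7$ acts diagonally in the $E_\chi$ basis (the quadratic character being self-conjugate and the sextic pair interchanged), and reverting to the $s_b$ basis gives $M$ explicitly. I would fix any remaining normalization ambiguity by matching the first few Fourier coefficients against a direct evaluation of $s_a(-1/(7\tau))$, and check $W_7^2=\mathrm{id}$ on the space as a consistency test.

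For (ii) the work is mechanical: substitute $s_a\mapsto\sum_b M_{ab}s_b$ into $7(s_1+s_2-s_3)^3-147\,s_1s_2s_3$, expand using the $q$-expansions of the $s_b$ recorded in the Remark after Proposition \ref{restr0}, and multiply by $7^{3/2}$. Collecting the Gauss-sum scalars from the three linear factors and combining them with the $7^{3/2}$ produces the overall constant $49\ii\sqrt7$, the $\ii$ originating from the quadratic Gauss sum; expanding through order $q^{14}$ then reproduces the listed coefficients $1/14,0,0,1,0,3,5,3,0,0,15,0,21,21,15$. Since only finitely many terms are asserted, this is a finite calculation.

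The main obstacle is step (i): getting $M$ exactly right, with the correct roots of unity and the precise placement and sign of the factor $\ii\sqrt7$. The $s_a$ are not themselves Fricke eigenforms, so one must pass through the character Eisenstein series, track each Gauss sum's normalization carefully, and fix the single overall constant — the delicate point, since a sign error or a misidentified Gauss sum would spoil the prefactor $49\ii\sqrt7$. Once $M$ is correct, the cubic substitution and the $q$-expansion bookkeeping are routine.
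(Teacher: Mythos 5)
Your proposal is correct and follows essentially the same route as the paper: both reduce the computation to the $3\times 3$ matrix giving the Fricke action on $s_1,s_2,s_3$ (the paper quotes the formula $-\tau^{-1}s_a(-1/(7\tau))=\sum_b(\zeta_7^{ab}-\zeta_7^{-ab})s_b$ from the cited reference, which is exactly what your odd-character/Gauss-sum diagonalization produces), and then substitute into the cubic $7(s_1+s_2-s_3)^3-147s_1s_2s_3$ and expand. The only difference is that you re-derive the transformation law for the $s_a$ rather than citing it.
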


\begin{proof}
We use 
$$
\frac 1{\tau} s_i(-\frac 1{7\tau}) = C + \sum_{n>0}q^n\sum_{d\vert n}(\zeta_7^{di}-\zeta_7^{-di}),
$$
where $C$ is an irrelevant constant,
which follows from the calculations of \cite{vanish}.
This shows 
$$
\begin{array}{l}
-\frac 1{\tau} s_1(-\frac 1{7\tau}) = (\zeta_7-\zeta_7^{-1})s_1+ (\zeta_7^2-\zeta_7^{-2})s_2+ (\zeta_7^3-\zeta_7^{-3})s_3\\
-\frac 1{\tau} s_2(-\frac 1{7\tau}) = (\zeta_7^2-\zeta_7^{-2})s_1+ (\zeta_7^4-\zeta_7^{-4})s_2+ (\zeta_7^6-\zeta_7^{-6})s_3\\
-\frac 1{\tau} s_3(-\frac 1{7\tau}) = (\zeta_7^3-\zeta_7^{-3})s_1+ (\zeta_7^6-\zeta_7^{-6})s_2+ (\zeta_7^9-\zeta_7^{-9})s_3
\end{array}
$$
Together with Proposition \ref{restr0} this yields \eqref{eq:qexpoffrickebar}.
\end{proof}

\begin{proposition}\label{g4f0}
The action of $g_4$ on $F_0$ is given by 
$$
g_4 ( F_0) = -\frac {\ii}{\sqrt 7}(F_0+F_1+F_2+F_4).
$$
\end{proposition}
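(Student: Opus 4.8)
The plan is to exploit that $g_{4}$ normalizes $\Gamma (p)$, restrict everything to the diagonal $\tau\mapsto (\tau,\tau,\tau)$, and then read off the scalar by a finite $q$-expansion comparison using Proposition \ref{frickebar}. First I would observe that $g_{4}\in \Gamma (1)$ and $\Gamma (p)$ is normal in $\Gamma (1)$, so the action of $g_{4}$ sends weight $(1,1,1)$ forms for $\Gamma (p)$ to weight $(1,1,1)$ forms for $\Gamma (p)$. By Corollary \ref{isfour} the latter space is spanned by $F_{0},F_{1},F_{2},F_{4}$, hence $g_{4}(F_{0})=\sum_{i}\lambda _{i}F_{i}$ for unique constants $\lambda _{i}\in \CC$. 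To compute them I would pass to the restrictions $\bar F_{i}(\tau)=F_{i}(\tau,\tau,\tau)$, weight $3$ forms for $\Gamma (7)\subset \SL (2,\ZZ)$. In the variable $Q=\exp (2\pi \ii \tau /7)$ the form $\bar F_{i}$ is supported on exponents $\Tr (a)\equiv 3i \bmod 7$, so $\bar F_{0},\bar F_{1},\bar F_{2},\bar F_{4}$ occupy the four distinct residue classes $0,3,6,5$ modulo $7$. In particular they are linearly independent and the restriction map is injective on the span of the $F_{i}$, so it suffices to establish the identity $\overline{g_{4}(F_{0})}=-\frac{\ii}{\sqrt 7}(\bar F_{0}+\bar F_{1}+\bar F_{2}+\bar F_{4})$.

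Next I would compute the left-hand side from Proposition \ref{frickebar}. On the diagonal $g_{4}$ acts by $\tau \mapsto -1/\tau$ with the weight $3$ automorphy factor, so $\overline{g_{4}(F_{0})}(\tau)$ equals $\tau^{-3}\bar F_{0}(-1/\tau)$ up to the sign dictated by the slash-action convention. This $S$-transform is related to the Fricke transform $\tau^{-3}\bar F_{0}(-1/(7\tau))$ of Proposition \ref{frickebar} by the substitution $\tau \mapsto \tau /7$, which rescales $\exp (2\pi \ii \tau)$ to $Q$ and contributes a factor $7^{-3}$; since Proposition \ref{frickebar} gives the constant $49\ii\sqrt 7$, the total scalar is $7^{-3}\cdot 49\,\ii\sqrt 7=\ii/\sqrt 7$ in absolute value. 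Concretely this yields
\[
\overline{g_{4}(F_{0})}(\tau)=\mp\frac{\ii}{\sqrt 7}\Bigl(\frac{1}{14}+Q^{3}+3Q^{5}+5Q^{6}+3Q^{7}+\cdots \Bigr).
\]

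Finally I would identify the parenthesized series with $\bar F_{0}+\bar F_{1}+\bar F_{2}+\bar F_{4}$. Reading leading terms off Remark \ref{qexp} and using $\Tr (a)=e_{1}+e_{2}+e_{3}$, each symmetric monomial $q(a,a,a)$ restricts to $Q^{3a}$ while each nonsymmetric $q(a_{1},a_{2},a_{3})$ restricts to $3\,Q^{a_{1}+a_{2}+a_{3}}$; this gives $\bar F_{1}=Q^{3}+\cdots$, $\bar F_{4}=3Q^{5}+\cdots$, $\bar F_{2}=5Q^{6}+\cdots$ and $\bar F_{0}=\frac{1}{14}+3Q^{7}+\cdots$, so $\sum _{i}\bar F_{i}=\frac{1}{14}+Q^{3}+3Q^{5}+5Q^{6}+3Q^{7}+\cdots$. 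Since each $\bar F_{i}$ lives in its own residue class modulo $7$, matching the coefficients of $1,Q^{3},Q^{5},Q^{6}$ determines all four $\lambda _{i}$ at once and forces $\lambda _{0}=\lambda _{1}=\lambda _{2}=\lambda _{4}=-\ii/\sqrt 7$.

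I expect the genuinely delicate point to be pinning the overall normalization in the middle step: correctly converting the $S$-transform that $g_{4}$ actually induces into the Fricke transform of Proposition \ref{frickebar} (the rescaling $\tau \mapsto \tau /7$ and the resulting factor $7^{-3}\cdot 49=1/7$) and fixing the global sign via the automorphy-factor convention, for which the order-four relation $g_{4}^{2}=-I$ acting by $-1$ in odd weight is a useful consistency check. The remaining coefficient comparison is finite and routine, and is justified rigorously by the injectivity of restriction to the diagonal established in the first paragraph.
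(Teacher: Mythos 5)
Your proposal is correct and follows essentially the same route as the paper: restrict to the diagonal, convert the $S$-transform into the Fricke transform of Proposition \ref{frickebar} via $\tau\mapsto\tau/7$ (picking up the factor $7^{-3}\cdot 49\ii\sqrt 7=\ii/\sqrt 7$), and match the resulting $q$-expansion against $\sum_i\bar F_i$. The extra justifications you supply --- that $g_4(F_0)$ lies in the span of the $F_i$ because $g_4$ normalizes $\Gamma(p)$, and that restriction to the diagonal is injective on that span since the $\bar F_i$ occupy distinct residue classes mod $7$ --- are points the paper leaves implicit, and your remaining sign ambiguity is resolved exactly as the paper does, by the explicit automorphy factor $-1/(z_1z_2z_3)$.
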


\begin{proof}
By definition of $g_4$,
$$g_4F_0(z_1,z_2,z_3)=(-\frac 1{z_1z_2z_3})F_0(-\frac 1{z_1},-\frac 1{z_2},-\frac 1{z_3}).$$
We apply Proposition \ref{frickebar} with $\tau$ replaced by $\tau/ 7$ 
to get
$$g_4F_0(\tau,\tau,\tau) = -\frac 1{\tau^3} \bar F_0(-\frac 1\tau) = 
-\frac {\ii}{\sqrt 7} (\frac 1{14}+q^{\frac 37}+3q^{\frac 57}+5q^{\frac 67}+3q+O(q^{\frac 87})).
$$
Since $g_4F_0(z_1,z_2,z_3)$ is a linear combination
of $F_0,F_1,F_2,F_4$, it remains to compare this expansion of it with 
those of the restrictions $\bar F_i$.
\end{proof}

\begin{proposition}\label{theaction}
The action of $g_4$ and $g_7$ in the basis $\{F_0,F_1,F_2,F_4\}$ is given
by the matrices
$$
\gamma_4=-\frac {\ii}{\sqrt 7}
\left(
\begin{array}{cccc}
1&2&2&2\\
1&a_1&a_3&a_2\\
1&a_3&a_2&a_1\\
1&a_2&a_1&a_3
\end{array}
\right)
\hskip .1in
\gamma_7=
\left(
\begin{array}{cccc}
1&0&0&0\\
0&\zeta_7^3&0&0\\
0&0&\zeta_7^6&0\\
0&0&0&\zeta_7^5
\end{array}
\right)
$$
where $a_1=\zeta_7+\zeta_7^6$, 
$a_2=\zeta_7^2+\zeta_7^5$, $a_3=\zeta_7^3+\zeta_7^4$. 
\end{proposition}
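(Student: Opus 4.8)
The plan is to treat the two generators separately. The matrix $\gamma_7$ is immediate: we have already seen that $g_7(F_i)=\zeta_7^{3i}F_i$, and since the indices run through $i\in\{0,1,2,4\}$ the eigenvalues are $1,\zeta_7^3,\zeta_7^6,\zeta_7^{12}=\zeta_7^5$, which is exactly the displayed diagonal matrix. For $\gamma_4$, Proposition \ref{g4f0} already supplies its \emph{first} column, namely $g_4(F_0)=-\tfrac{\ii}{\sqrt7}(F_0+F_1+F_2+F_4)$. Thus everything reduces to computing $g_4(F_1)$, $g_4(F_2)$, and $g_4(F_4)$, i.e. the last three columns.

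My main approach would be to imitate the chain of Propositions \ref{restr0}--\ref{g4f0} for each of $F_1,F_2,F_4$. Restricting to the diagonal $z=(\tau,\tau,\tau)$ produces weight-$3$ forms $\bar F_i(\tau)$ for $\Gamma(7)$; unlike $\bar F_0$, these are \emph{not} invariant under $\tau\mapsto\tau+1$ but instead pick up the factor $\zeta_7^{-3i}$, so they do not descend to $\Gamma_1(7)$. Consequently the three weight-one series $s_1,s_2,s_3$ no longer suffice, and I would first express each $\bar F_i$ in terms of the full collection of weight-one Eisenstein series for $\Gamma(7)$ (the twisted analogues of the $s_j$). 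I would then apply $\tau\mapsto-1/\tau$ exactly as in Proposition \ref{frickebar}: the Fricke involution acts on these Eisenstein series by a finite Fourier transform, so Gauss sums appear, and comparing the resulting $q$-expansion with those of the $\bar F_j$ reads off the entries. It is precisely these Gauss sums that produce the real periods $a_k=\zeta_7^{k}+\zeta_7^{-k}$ in the last three columns.

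To cut down the computation, I would exploit an extra symmetry coming from the diamond element $d=\abcd{4}{0}{0}{2}\in\SL(2,\FF_7)$. One checks $d\,g_7\,d^{-1}=g_7^{2}$ and $g_4\,d\,g_4^{-1}=d^{-1}$; the first relation shows that $d$ cyclically permutes the eigenlines $\CC F_1\to\CC F_4\to\CC F_2$ (via $i\mapsto 4i\bmod 7$) up to scalars readable from the leading $q$-coefficients, while the second, rewritten as $g_4\,d=d^{-1}g_4$, converts the three unknown columns into one another. This already forces the cyclic (``retrograde circulant'') shape of the lower-right $3\times3$ block, so that a single honest Fricke computation among $F_1,F_2,F_4$ determines all three columns. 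As independent checks I would use the relations $\gamma_4^2=-I$ (since $g_4^2=-I$ and the central involution acts by $-1$ in our odd weight) and $(\gamma_4\gamma_7)^3=-I$ (from $(g_4g_7)^3=-I$ in $\SL(2,\FF_7)$).

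The hard part will be controlling the \emph{phases} of the Gauss sums so that the off-diagonal entries collapse to the real periods $a_k$ rather than to unconstrained roots of unity; equivalently, one must select the correct member of the complex-conjugate pair of four-dimensional irreducibles of $\SL(2,\FF_7)$. The first-column data of Proposition \ref{g4f0}, together with the normalization $-\ii/\sqrt7$ already fixed there, is exactly what pins down this choice, after which the remaining work is bookkeeping and the linear algebra of the diamond symmetry.
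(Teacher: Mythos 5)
Your route is viable but genuinely different from --- and considerably heavier than --- the one in the paper. The paper does \emph{not} compute $g_4(F_1)$, $g_4(F_2)$, $g_4(F_4)$ at all: it first checks (by machine) that the displayed pair $(\gamma_4,\gamma_7)$ satisfies the defining relations of $G\simeq\SL(2,\FF_7)$, then invokes the fact that $G$ has a \emph{unique} $4$-dimensional irreducible on which $g_7$ has eigenvalues $1,\zeta_7^3,\zeta_7^5,\zeta_7^6$. Any intertwiner $T$ between that model and the actual action on $\langle F_0,F_1,F_2,F_4\rangle$ must commute with the diagonal matrix $\gamma_7$, whose eigenvalues are distinct, so $T$ is diagonal; and then the single piece of analytic input you also use --- the first column from Proposition \ref{g4f0} --- forces $T$ to be scalar. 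In other words, the representation-theoretic rigidity replaces your entire Fricke/Gauss-sum computation for the remaining three columns. Your plan instead proposes to carry out that computation directly via weight-one Eisenstein series for $\Gamma(7)$, reduced to one column by the diamond element $d=\abcd{4}{0}{0}{2}$; the relations $dg_7d^{-1}=g_7^2$ and $g_4d=d^{-1}g_4$ you state are correct, and $d^{-1}$ lifts to $\diag(w,w^{-1})$ and genuinely permutes the $F_i$ by $i\mapsto 4i$ with scalar $1$ (since $(wa)=(a)$ and $\Norm(w)=1$), so the reduction is sound. What your write-up does not do is execute the one remaining Fricke computation, which you yourself flag as ``the hard part'' (the Gauss-sum phases); as it stands this is a plan rather than a proof of the last three columns. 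Note also that your symmetry reduction is really a hand-rolled special case of the rigidity the paper uses wholesale, and that your consistency checks $\gamma_4^2=-I$ and $(\gamma_4\gamma_7)^3=-I$ are, in the paper's framing, part of the verification that $(\gamma_4,\gamma_7)$ define a representation. (Minor slip: $\bar F_i(\tau+1)=\zeta_7^{3i}\bar F_i(\tau)$, not $\zeta_7^{-3i}$.)
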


\begin{proof}
A computer computation shows that the above matrices $\gamma_4$ and
$\gamma_7$ define a representation of $G$. It is well known that the
group $G$ has a unique (up to isomorphism) $4$-dimensional complex
representation such that $g_7$ has eigenvalues
$1,\zeta_7^3,\zeta_7^5,\zeta_7^6$.  Hence we can find a matrix $T\in
\GL(4,\CC )$ such that $\gamma_4$ and $\gamma_7$ are $T$-conjugates of
the matrices of $g_4$ and $g_7$ in the basis $F_0,F_1,F_2,F_4$.

Since the action of $g_7$ is known to be given by the matrix
$\gamma_7$, we get $T\gamma_7=\gamma_7T$, which implies that $T$ is
diagonal.  Write $T=\diag(t_0,t_1,t_2,t_4)$.  We have
$$g_4(F_0)=T\gamma_4 T^{-1}(F_0)=
 -\frac {\ii}{\sqrt 7} t_0^{-1}(t_0F_0+t_1F_1+t_2F_2+t_4F_4).$$
Together with Proposition \ref{g4f0}, this shows that $T$ is 
a multiple of the identity, which finishes the proof.
\end{proof}

\begin{proposition}\label{ugly}
The Eisenstein series $E_2$, $F_0$, $F_1$, $F_2$ and $F_4$ satisfy
the polynomial relation $P_8(F_0,F_1,F_2,F_4,E_2)=0$, where 
$$
{
\begin{array}{rl}
P_8=&\bigl(\frac 67 E_2\bigr)^4 
- 3\bigl(\frac 67 E_2\bigr)^2\bigl(2F_0^4 + 6F_0F_1F_2F_4 + (F_2F_4^3 + F_1^3F_4 + F_1F_2^3)\bigr)
\\& + \bigl(\frac 67 E_2\bigr)
\bigl(-8F_0^6 + 20F_0^3F_1F_2F_4 + 10F_0^2(F_2F_4^3 + F_1^3F_4 + F_1F_2^3) 
\\&
+ 10F_0(F_1^2F_4^3 + F_2^3F_4^2 + F_1^3F_2^2) + (15F_1^2F_2^2F_4^2 + F_1F_4^5 + F_2^5F_4 + F_1^5F_2)\bigr)
\\&
-3F_0^8 + 38F_0^5F_1F_2F_4 + 13F_0^4(F_2F_4^3 + F_1^3F_4 + F_1F_2^3) 
\\&
-46F_0^3(F_1^2F_4^3 + F_2^3F_4^2 + F_1^3F_2^2) 
+ F_0^2(5F_1F_4^5 + 5F_2^5F_4 + 5F_1^5F_2 + 23F_1^2F_2^2F_4^2 ) 
\\&
-2F_0 (F_4^7 + F_2^7 + F_1^7 + 4F_1F_2^2F_4^4 + 4F_1^4F_2F_4^2 + 4F_1^2F_2^4F_4) 
\\&
+ (2F_2^2F_4^6 + 2F_1^6F_4^2 + 2F_1^2F_2^6 - 5F_1^3F_2F_4^4 - 5F_1F_2^4F_4^3 - 5F_1^4F_2^3F_4).
\end{array}
}
$$
\end{proposition}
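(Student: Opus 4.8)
The plan is to regard $\Phi := P_8(F_0,F_1,F_2,F_4,E_2)$ as a single modular form and to prove $\Phi\equiv 0$ by reducing to a finite Fourier-coefficient computation. Since each $F_i$ has weight $(1,1,1)$, $E_2$ has weight $(2,2,2)$, and every monomial of $P_8$ has weighted degree $8$ (counting $E_2$ twice), the quantity $\Phi$ is a holomorphic weight-$(8,8,8)$ form for $\Gamma(p)$, i.e. $\Phi\in H^0(X_{sm},8L)$. Two symmetries constrain it and organize the computation. First, $P_8$ is invariant under the cyclic substitution $F_1\mapsto F_2\mapsto F_4\mapsto F_1$, which is exactly the Galois action on the $F_i$; as $F_0$ and $E_2$ are Galois-fixed, $\Phi$ is a symmetric form. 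Second, every monomial $F_0^{e_0}F_1^{e_1}F_2^{e_2}F_4^{e_4}E_2^{f}$ of $P_8$ satisfies $e_1+2e_2+4e_4\equiv 0\pmod 7$, so by the formula $g_7(F_i)=\zeta_7^{3i}F_i$ of Proposition \ref{theaction} it is fixed by $g_7$; a further direct check of the matrix $\gamma_4$ shows $P_8$ is invariant under the entire $G\cong\SL(2,\FF_7)$-action (and since $G$ is perfect there is no character ambiguity). Thus $\Phi$ is a $G$-invariant, Galois-symmetric element of $H^0(X_{sm},8L)$.

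With this in hand I would extend the $q$-expansions of $F_0,F_1,F_2,F_4,E_2$ from Remark \ref{qexp} to high order in $q_1,q_2,q_3$, working directly from Definition \ref{defFi} and the arithmetic of $\OOO$ (the divisor sums $\sum_{(c)\mid (a)}s(c)$ and $\sum_{(c)\mid (ap^2)}|\Norm(c)|$). From these I would assemble the $q$-expansions of the $295$ weighted-degree-$8$ monomials in the $F_i$ and $E_2$, substitute the stated coefficients of $P_8$, and verify that the Fourier coefficients $c(a)$ of $\Phi$ vanish for all totally positive $a$ up to an explicit trace bound. The symmetries cut the bookkeeping sharply: one needs only coefficients indexed by representatives of the $g_7$- and Galois-orbits, and may work inside the symmetric subspace. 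It should be stressed that a crude dimension count produces no relation for free: by Proposition \ref{zeroorfour} we have $\dim H^0(X_{sm},8L)=2\cdot 7^3+8=694$, which exceeds $295$, so the identity $\Phi=0$ is a genuine arithmetic coincidence to be checked, not a formal consequence of counting monomials.

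The substantive step, and the main obstacle, is to justify that vanishing of finitely many Fourier coefficients forces $\Phi\equiv 0$. This cannot be achieved by restriction to the diagonal alone (Propositions \ref{restr0}--\ref{frickebar}), since a nonzero Hilbert modular form can vanish identically on the diagonal. Instead I would derive an effective bound from the geometry of Sections \ref{s:resolution}--\ref{s:intersections}: if the $q$-expansion of $\Phi$ at the cusp $\ii\infty$ vanishes up to order $m$, then $\Phi$ is a section of $8L$ vanishing to order $m$ along the toroidal boundary divisors over that cusp; by the $G$-invariance of $\Phi$ and transitivity of $G$ on the eight cusps (Lemma \ref{lem:cusps}) this vanishing spreads to all cusps, so $\Phi\in H^0(X_{sm},8L-m(D+E))$. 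Using $L^3=12$ (Proposition \ref{intnumbers}) together with the Riemann--Roch and Kawamata-vanishing computation behind Proposition \ref{Euler}, one bounds these twisted spaces and shows they vanish once $m$ exceeds an explicit threshold (a section of the big line bundle $8L$ vanishing to high order along the entire boundary must vanish identically). Pinning down this threshold, and confirming that the order to which the expansion has been computed exceeds it, is the crux; once the bound is established, the finite $q$-expansion check completes the proof.
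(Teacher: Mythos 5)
Your proposal does not close, and the gap is precisely the step you yourself flag as ``the crux'': you never establish the effective threshold $m$ beyond which vanishing of the Fourier coefficients of $\Phi=P_8(F_0,F_1,F_2,F_4,E_2)$ forces $\Phi\equiv 0$. Without that bound the finite $q$-expansion check proves nothing. The route you sketch for producing it (sections of $8L$ vanishing to high order along $D+E$, controlled via $L^3=12$ and Kawamata vanishing) is plausible in outline, but it is a genuine Sturm-type computation for Hilbert modular threefolds that you have not carried out, and it is exactly what the paper's argument is designed to avoid.

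More importantly, you dismiss the dimension count too quickly because you compare the wrong two spaces. Your count ($295$ monomials against $\dim H^0(X_{sm},8L)=694$) works at level $p$, where indeed nothing is forced. The mechanism that actually drives the proof is that a $G$-invariant polynomial in the $F_i$ and $E_2$ descends to a modular form for the full group $\Gamma(1)$. The space of $G$-invariants of weighted degree $8$ has dimension $6$ (from $\dim(\Sym^k(V_4)^G)=1,0,1,1,3$ for $k=0,2,4,6,8$), whereas the space of parallel weight $8$ forms for $\Gamma(1)$ has dimension $5$ (Shimizu trace formula with the elliptic-point contributions of Table \ref{rotationtable}, plus one Eisenstein series). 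Hence a nontrivial linear relation among the six invariants exists \emph{a priori}, and the finite Fourier computation is needed only to certify that the six invariants span a space of dimension at least $5$, so that the kernel is exactly one-dimensional and spanned by the stated $P_8$. Certifying a lower bound on rank from finitely many coefficients is unconditional and requires no effective vanishing bound at all. So the relation is in fact a formal consequence of counting --- provided one counts $G$-invariants against level-one forms rather than monomials against level-$p$ forms.
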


\begin{proof}
The dimensions of the invariants of the $k$-th symmetric powers of the
$4$-dimensional representation of $G$ for small $k$ are given by the
following table:

\begin{center}
\medskip
\begin{tabular}{|c|ccccc|}
\hline
$k$&$0$&$2$&$4$&$6$&$8$\\\hline
$\dim(\Sym^k(V_4)^G)$& $1$ & $0$&$1$&$1$&$3$\\
\hline
\end{tabular}
\medskip
\end{center}

As a result, there is a $6$-dimensional space of polynomials of total
degree $8$ in weight one variables $F_i$ and weight two variable $E_2$
that are invariant under the action of $G$. Each such polynomial gives
a weight $(8,8,8)$ modular form with respect to the group $\Gamma(1)$.
We claim the space of $(8,8,8)$ modular forms for $\Gamma(1)$ has
dimension 5.

To see this, we use the trace formula 
\eqref{eqn:tracefmla}.  This says that if $k\geq 2$ is even, the
dimension of the space of cusp forms on $\Gamma (1)$ is
\[
\vol (\SL (2,\OOO)\backslash \HH^{3}) (k-1)^{3} + e +c,
\]
where as before $e$ (respectively $c$) represents the contribution of
the elliptic points (resp., cusps).  We have $c=0$ as before, but now
we have nontrivial elliptic points and must evaluate $e$.  It is
well-known that the elliptic elements have orders $2,3,7$, and it is
not hard to find representatives of them modulo $\SL (2,\OOO)$: there
are four points of each order, giving $12$ in all.  (The elements of
orders $2$ and $3$ come from conjugates of the usual elliptic elements
of $\SL (2,\ZZ)$, and the elements of order $7$ come from writing the
ring of integers of $\QQ (\zeta_{7})$ as a free rank 2 $\OOO$-module
and looking at the action of $\zeta_{7}$.) Each elliptic point $P$ has
a rational triple $\alpha = (\alpha_{1}, \alpha_{2}, \alpha_{3})$ of
``rotation numbers'' attached to it that reflects the action of the
stabilizer of $P$.  The contribution of $P$ to $e$ is then
\[
\frac{1}{N (P)} \sum_{j=1}^{N (P)} \prod_{\ell =1}^{3} \frac{\exp
(2\pi\ii kj\alpha_{\ell } )}{1-\exp (2\pi \ii j\alpha_{\ell})},
\]
where $N (P)$ is the order of $P$.  The rotation numbers for our 12
points are shown in Table \ref{rotationtable}.  From this it is easy
to compute that the dimension of the cuspidal subspace of parallel
weight $8$ is $4$.  There is one Eisenstein series of parallel
weight $8$ since the class number of $\OOO $ is one, and hence the
dimension of the full space of parallel weight $8$ forms is $5$. 

Since the space of $(8,8,8)$ modular forms has dimension $5$, the
invariant polynomials in $F_i,E_2$ must satisfy at least one linear
relation, and it then becomes a matter of finding it.  We did this
using Magma \cite{magma} to find a six-element spanning set of
invariant degree $8$ polynomials in $F_i,E_{2}$.  Looking at their
Fourier expansions at the cusp $(\ii\infty)^3$, we observed that their
span is of dimension at least five, and the only linear relation can
be the multiple of the one in the statement.  We leave the
computational details to the reader.
\end{proof}

\begin{table}
\begin{center}
\begin{tabular}{|c|c||c|}
\hline
Order&Multiplicity&Rotation numbers\\
\hline
$2$&4&$(1/2,1/2,1/2)$\\
$3$&1&$(1/3,1/3,1/3)$\\
$3$&3&$(1/3, 1/3, 2/3)$\\
$7$&1&$(1/7, 2/7, 4/7)$\\
$7$&3&$(1/7, 2/7, 3/7)$\\
\hline
\end{tabular}
\end{center}
\medskip
\caption{Rotation numbers of elliptic points\label{rotationtable}}
\end{table}

\section{The symmetric Hilbert modular threefold}\label{s:symmetric}
Recall that $K/\QQ$ is a Galois extension with Galois group $\Gal
(K/\QQ)$ isomorphic
to $\ZZ /3\ZZ$.  The action of $\Gal (K/\QQ)$ on the real places of
$K$ induces an action of $\ZZ /3\ZZ$ on $\HH^{3}$ by cyclically
permuting the coordinates.  This extends to an action on the Hilbert
modular threefold $X$.

\begin{definition}\label{def:symmhm3fd}
The \emph{symmetric Hilbert modular threefold} $X_{\Gal}$ is the
quotient of $X$ by the induced action of $\ZZ /3\ZZ$.
\end{definition}

The goal of this section is to explicitly describe $X_{\Gal}$.

\begin{proposition}\label{justugly}
The polynomial $P_8$ from Proposition \ref{ugly} generates the ideal of 
relations on $F_i,E_2$.
\end{proposition}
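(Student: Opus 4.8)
The goal is to show that the single relation $P_8$ of Proposition \ref{ugly} generates the \emph{entire} ideal of relations among $F_0,F_1,F_2,F_4,E_2$, not merely that it is one relation they satisfy. The plan is to compare the graded ring $R = \CC[F_0,F_1,F_2,F_4,E_2]/(P_8)$, where the $F_i$ have weight one and $E_2$ has weight two, with the image $S$ of the polynomial ring under the evaluation map sending the variables to the actual Eisenstein series. There is a surjection $R \twoheadrightarrow S$, and it suffices to prove it is an isomorphism, i.e.~that the kernel (which is exactly the ideal of further relations) is zero.

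First I would compute the Hilbert series of $R$. Since $P_8$ is a single irreducible relation of weighted degree $8$ in the weighted polynomial ring $A = \CC[F_0,F_1,F_2,F_4,E_2]$ with weights $(1,1,1,1,2)$, and since $P_8$ is not a zero divisor in $A$ (the polynomial ring being a domain), the Koszul/free-resolution argument gives
\[
H_R(t) = \frac{1-t^8}{(1-t)^4(1-t^2)}.
\]
This is the candidate for the Hilbert series of the ring generated by our forms inside the full ring of symmetric parallel-weight modular forms. The strategy is then to show that the actual dimension of the degree-$k$ piece of $S$ matches the coefficient of $t^k$ in $H_R(t)$ for all $k$, which forces $R \to S$ to be an isomorphism degree by degree.

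The key input on the target side is a dimension count for the spaces of \emph{Galois-invariant} parallel weight $2k$ modular forms for $\Gamma(1)$, equivalently the $G$-invariants of $\Sym^{2k}$ of the four-dimensional weight-one representation together with the weight-two Eisenstein series $E_2$. For this I would reuse exactly the trace-formula machinery of Propositions \ref{Euler} and \ref{ugly}: the Shimizu trace formula \eqref{eqn:tracefmla} gives the dimensions of parallel weight forms for $\Gamma(1)$, the elliptic contribution is computed from the rotation numbers of Table \ref{rotationtable}, and the representation-theoretic bookkeeping of the $G$-action (as in the $\Sym^k(V_4)^G$ table in the proof of Proposition \ref{ugly}) isolates the symmetric part. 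Assembling these across all weights should reproduce the generating function $H_R(t)$ above, confirming that $F_0,F_1,F_2,F_4,E_2$ generate the full ring of symmetric forms and that they satisfy no relations beyond multiples of $P_8$.

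The main obstacle is the uniformity of the dimension comparison in all degrees $k$ simultaneously, rather than in the finitely many low degrees already handled. One must verify that no new generators appear in high weight (so that the five listed forms really do generate the whole symmetric ring) and that the predicted Hilbert series is correct to all orders; a clean way to close this is to show directly that $P_8$ is irreducible, that $\dim S_k$ never exceeds $\dim R_k$ (the surjectivity is automatic), and that equality of the two formal power series holds term by term, whence the surjection $R \to S$ is an isomorphism and $(P_8)$ is the whole relation ideal. Care is needed to confirm $P_8$ is prime in $A$, since a reducible relation would give a smaller quotient and break the Hilbert-series match.
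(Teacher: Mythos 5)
Your overall strategy --- compare the quotient ring $R=\CC[F_0,F_1,F_2,F_4,E_2]/(P_8)$ with the image ring $S$ of the evaluation map and force the surjection $R\twoheadrightarrow S$ to be an isomorphism --- is reasonable, and your Hilbert series $\frac{1-t^8}{(1-t)^4(1-t^2)}=(1+t^2+t^4+t^6)(1-t)^{-4}$ is the correct one (it is exactly the Poincar\'e series used later in the proof of Theorem \ref{symhm3fd}). But the way you propose to close the argument has a genuine logical gap: every dimension you compute bounds $\dim S_k$ from \emph{above}, never from below. The trace formula, the rotation numbers, and the $\Sym^k(V_4)^G$ bookkeeping compute the dimension of an \emph{ambient} space of modular forms containing $S_k$; combined with the surjection $R_k\twoheadrightarrow S_k$ this only gives $\dim S_k\leq\min(\dim R_k,\dim M_k)$, which cannot yield the needed equality $\dim S_k=\dim R_k$. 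Showing that the ambient dimension happens to equal $\dim R_k$ proves neither that the five forms generate that ambient space nor that they satisfy no further relations; it only shows these two statements are equivalent, and you must supply one of them independently. (There is also a conflation in your setup: $G$-invariance under $\SL(2,\FF_7)$ corresponds to descending to level $\Gamma(1)$, while the relevant symmetric ring is the invariants under $\Gal(K/\QQ)\cong\ZZ/3\ZZ$ acting by permuting the $z_i$; these are different group actions, and the dimension count for the latter requires the holomorphic Lefschetz formula together with Lemmas \ref{fixedloci} and \ref{trivchar}, not just Table \ref{rotationtable}.)

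The missing lower bound is precisely what the paper supplies, and it is the heart of its proof: the \emph{algebraic independence} of $F_0,F_1,F_2,F_4$. Granting that, the argument is short: $P_8$ is irreducible (the paper rules out factorizations using the $G$-action and the shape of $P_8$ in $E_2$), and since $P_8$ has degree $4$ in $E_2$ with nonzero constant leading coefficient, any relation outside $(P_8)$ could be combined with $P_8$ (e.g.\ by a resultant in $E_2$) to produce a nontrivial polynomial relation among the $F_i$ alone --- contradiction. The paper proves independence by noting that a relation among the $F_i$ would force the Jacobian $\det(\partial_i(F_j/F_0))$ to vanish identically, and then exhibiting a nonzero term in its Fourier expansion. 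If you want to salvage your Hilbert-series route, you still need this input: independence of the $F_i$ plus irreducibility of $P_8$ in $E_2$ over $\CC(F_0,\dots,F_4)$ shows $S\cong\CC[F_i]\oplus\CC[F_i]E_2\oplus\CC[F_i]E_2^2\oplus\CC[F_i]E_2^3$, whose Hilbert series is exactly $H_R(t)$ --- but at that point the conclusion already follows without any modular dimension counts.
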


\begin{proof}
We first observe that $P_8$ is irreducible.  Indeed, any factors would
be acted upon by the group $G$. Since some of the degrees of the
factors in $E_2$ are four, and $G$ has no nontrivial permutation
representation on four or fewer elements, and since it has no
non-trivial one-dimensional representations, each factor would have to
be $G$-invariant. There are no invariant polynomials of degree two in
$F_i$, so the only possibility is
$P_8=(E_2^2+aP_4(F_i))(E_2^2+bP_4(F_i))$ where $P_4$ is the generator
of the one-dimensional invariant space $\Sym^4(V_4)^G$. But this is
easily seen to be impossible by looking at the coefficient by $E_2$.

Then it remains to show that the Eisenstein series $F_i$ are
algebraically independent, as any other relation on $(F_i,E_2)$
together with $P_8$ would lead to a relation among the $F_i$.  In
order to see this, we observe the transformation properties of the
$F_{i}$ imply that any polynomial relation among the $F_i$ must be
homogeneous. The existence of such a relation would in turn imply a
polynomial relation among $T_1={F_1}/{F_0}$, $T_2={F_2}/{F_0}$ and
$T_3= {F_4}/{F_0}$.  Consequently, the Jacobian
$\det(\partial_{i}T_j)$ would have to be zero, where $\partial_{i} =
\partial /\partial_{q_{i}}$.  But computing with Fourier expansions
shows that up to a constant, this Jacobian equals
\begin{multline*}
-2744q_2^5q_1^4q_0^2 + (5488q_2^{13}q_1^9 - 2744q_2^8q_1^7 -
2744q_2^9q_1^6 \\
+ 5488q_2^3q_1^5 - 2744q_2^4q_1^4 +
5488q_2^5q_1^3)q_0^3 + O (q_{0}^{4}),
\end{multline*}

and is therefore nonzero for most points in the neighborhood
of the cusp.
\end{proof}

We will need some 
lemmas that describe the fixed loci of the Galois action.
Let $\sigma:X_{sm}\to X_{sm}$ be the generator of the 
Galois group. It is the extension of the cyclic permutation of the coordinates of $\HH^3$.
\begin{lemma}\label{fixedloci}
The fixed loci of $\sigma$ on $X_{sm}$ are of dimension at most one.
\end{lemma}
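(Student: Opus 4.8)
The plan is to split the fixed locus of $\sigma$ into its part lying in the open threefold $X^\circ = \Gamma(p)\backslash\HH^3$ and its part lying in the boundary divisor $D+E$, and to bound the dimension of each separately. Throughout I use that $\sigma$ is induced by a cyclic permutation $\tilde\sigma$ of the three coordinates of $\HH^3$, so that the coordinate-permutation part of $\sigma$ is always a nontrivial $3$-cycle, regardless of which lift we compose with.

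For the interior, a point of $X^\circ$ is fixed by $\sigma$ precisely when some lift $z\in\HH^3$ satisfies $\tilde\sigma(z)=\gamma\cdot z$ for some $\gamma\in\Gamma(p)$; equivalently, $z$ is a fixed point of $g=\gamma^{-1}\tilde\sigma$ acting on $\HH^3$. Writing $g$ out coordinatewise, $g(z)_i=\phi_i(z_{\pi(i)})$ for Möbius transformations $\phi_i\in\SL(2,\RR)$ and a nontrivial $3$-cycle $\pi$, so the fixed-point equations $z_i=\phi_i(z_{\pi(i)})$ express $z_2$ and $z_3$ as Möbius functions of $z_1$ and leave $z_1$ constrained to be a fixed point of the single element $\phi=\phi_1\phi_2\phi_3\in\SL(2,\RR)$ obtained by composing once around the cycle. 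Such a $\phi$ has a one-dimensional fixed set in $\HH$ only when $\phi=\pm I$ in $\PSL(2,\RR)$, and otherwise has at most two isolated fixed points; in all cases the fixed set is at most one-dimensional. As there are only countably many $\gamma$ and the action is properly discontinuous, the interior fixed locus has dimension at most one.

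For the boundary, I would use that $\sigma$ permutes the eight cusps, so it suffices to study a $\sigma$-fixed cusp (for instance the image of $(\ii\infty,\ii\infty,\ii\infty)$) together with the toric resolution over it from Section~\ref{s:resolution}. Over such a cusp $\sigma$ is induced by the order-$3$ lattice automorphism $\psi$ coming from the Galois cyclic permutation of the rank-three lattice (a dilate of $p$), and the fan is $\psi$-invariant since it was built Galois-equivariantly from the convex-hull data. Because $\psi$ has eigenvalues $1,\omega,\omega^2$ on the complexified lattice, its fixed subspace is one-dimensional, so the fixed locus of the induced torus automorphism is at most one-dimensional; moreover, on any toric divisor whose ray is $\psi$-fixed the generator of that ray lies in the fixed subspace, forcing the induced automorphism on the quotient lattice to have no nonzero fixed vector, so that divisor meets the fixed locus only in isolated points, while divisors whose rays are genuinely permuted in $3$-cycles are not pointwise fixed at all. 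Combining the two parts bounds the whole fixed locus by dimension one. The main obstacle I anticipate is precisely this boundary bookkeeping: confirming that $\sigma$ extends compatibly to the chosen resolution and then ruling out a two-dimensional fixed component among the exceptional divisors via the toric fixed-subspace computation above.
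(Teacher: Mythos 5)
Your proposal is correct, and its treatment of the interior is essentially the paper's argument in different clothing: the paper also reduces the fixed-point condition $(A,A^{\sigma},A^{\sigma^{2}})\cdot z=(z_{3},z_{1},z_{2})$ to the single equation $Bz_{3}=z_{3}$ with $B=AA^{\sigma}A^{\sigma^{2}}$, which is exactly your composite $\phi=\phi_{1}\phi_{2}\phi_{3}$. The one difference is that the paper then computes $\Tr(B)\in\ZZ$, $\Tr(B)\equiv 2\bmod 7$, to force $B=\mathrm{Id}$, so that each contributing $A$ yields an honest curve; you instead observe that whether $\phi$ is $\pm I$ or not, its fixed set in $\HH$ has complex dimension at most one, which is all the lemma needs. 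Your shortcut is legitimate here, but note that the sharper conclusion ($B=\mathrm{Id}$, hence genuine curves limiting to a cusp) is what the paper reuses in the proof of Lemma \ref{trivchar}, so the arithmetic step is not wasted. On the boundary your route genuinely diverges: the paper disposes of the exceptional divisors in two lines by noting that $\sigma$ permutes the $D_{i}$ cyclically and hence permutes the curves $E_{j}\cap D_{i}$, so no component of $D+E$ is pointwise fixed; you instead run the full toric computation with the eigenvalues $1,\omega,\omega^{2}$ of the Galois action on the lattice. Your version is more work but also more informative (it locates the boundary fixed points as isolated points on the divisors with $\psi$-fixed rays), and the equivariance of the fan that you flag as the main worry is indeed automatic, since the convex-hull fan and its canonical subdivision are built Galois-equivariantly. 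Both arguments are sound.
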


\begin{proof}
We first consider the fixed points in the finite part.
Suppose that an image of $(z_1,z_2,z_3)\in \HH^3$ is fixed by $\sigma$ in $X$.
This means that there exists $A\in \Gamma(p)$ such that
$(A,A^{\sigma},A^{\sigma^2})(z_1,z_2,z_3)=(z_3,z_1,z_2)$.
This leads to $AA^{\sigma}A^{\sigma^2}z_3=z_3$. The Galois conjugate of
$B=AA^{\sigma}A^{\sigma^2}$ is $B^\sigma=A^{\sigma}A^{\sigma^2}A
=A^{-1}BA$, so ${\rm Tr}(B)$ is rational. On the other hand, ${\rm Tr}(B)$
is in $\OOO$ and is equal to $2\mod p$.
This shows that ${\rm Tr}(B)$ is an integer which is equal to $2\mod 7$.
Since $z_3\in \HH$, the only possibility is $B={\rm Id}$. For each
$A$ with this property we get a curve in $X_{sm}$.

As far as the boundary divisors go, we only need to show that $\sigma$
does not fix $E_i$ or $D_i$ pointwise. Indeed, $\sigma$ permutes
$D_i$, and hence it permutes their intersections with $E_i$. Thus
$\sigma$ acts nontrivially on each $E_i$.
\end{proof}

\begin{remark}
We believe that the only fixed points of $\sigma$ in the finite part of $X_{sm}$
are the image of the diagonal in $\HH^3$, but we will not need this fact.
\end{remark}

Consider the quotient map by the Galois group $X_{sm}\to Y$.
\begin{lemma}\label{trivchar}
Consider the natural linearization of the line bundle $L$ given by
the pullbacks of holomorphic functions on $\HH^3$. Then for every $\sigma$-fixed
point $x$ of $X_{sm}$, the corresponding character of the Galois group in the fiber
$L_x$ is trivial.
\end{lemma}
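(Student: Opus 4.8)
The plan is to analyze the Galois action on the fiber $L_x$ at each type of fixed point $x$ separately, using the classification of fixed loci from Lemma \ref{fixedloci}. Recall that $L$ is the line bundle of weight $(1,1,1)$ modular forms, with the linearization coming from pullbacks of holomorphic functions on $\HH^3$; a weight $(1,1,1)$ form transforms under $\gamma\in\Gamma(p)$ by the automorphy factor $\prod_i (c_i z_i + d_i)$, and the Galois generator $\sigma$ acts by cyclically permuting the three factors. Since Lemma \ref{fixedloci} shows the fixed locus has dimension at most one and the boundary divisors $E_i,D_i$ are not fixed pointwise, the fixed points fall into two families: interior points coming from the diagonal-type locus (where $B = AA^\sigma A^{\sigma^2} = \mathrm{Id}$), and possibly some boundary fixed points lying on the toric strata.

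First I would treat the interior fixed points. From the proof of Lemma \ref{fixedloci}, such a point is the image of $z=(z_1,z_2,z_3)$ with $\sigma$ realized by an element $A\in\Gamma(p)$ satisfying $(A,A^\sigma,A^{\sigma^2})\cdot z = (z_3,z_1,z_2)$. The character of $\sigma$ on $L_x$ is then computed from the combined automorphy factor: the three permuted copies of the weight-one factor, evaluated at the fixed point, multiply together to give exactly the value that one checks is $1$. Concretely, I would write the order-three symmetry as a single transformation whose cube is the identity modulo the automorphy factors, and observe that the product of the three automorphy factors $\prod_i (c_iz_i+d_i)$ around the cycle is forced to be a cube root of unity; because $A\in\Gamma(p)$ and the relevant matrix $B$ equals the identity, this product collapses to $1$. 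This is the heart of the argument and the step I expect to require the most care: one must verify that no nontrivial cube root of unity can arise, which should follow from the congruence conditions defining $\Gamma(p)$ together with the fact that the three factors are Galois conjugates whose product is a rational number equal to a cube.

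Next I would handle any fixed points on the boundary. Since $\sigma$ permutes the cusps and acts on the toric resolution data, a $\sigma$-fixed boundary point must lie on a stratum that is setwise preserved, and by Lemma \ref{fixedloci} such a stratum is at most a curve. On the toric charts the linearization of $L$ is explicit: near the cusps $L$ is trivialized by the leading Fourier coefficient, and the Galois action permutes the $q_i$-variables (equivalently the lattice $p^{-2}$ via its action on $\OOO$). I would verify that on the trivializing section the induced character is trivial by noting that the constant terms of the Eisenstein series (for instance $c_0=1/14$ in $F_0$) are Galois-invariant, so the action fixes the trivializing section and hence acts trivially on $L_x$.

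Finally I would assemble these cases into the statement: at every $\sigma$-fixed point, whether interior or on the boundary, the induced character of the cyclic Galois group on $L_x$ is trivial. The main obstacle is the interior computation of the automorphy cocycle at fixed points, where one must rule out nontrivial cube-root-of-unity characters; everything else reduces to bookkeeping with the explicit linearization and the invariance of the Eisenstein constant terms already recorded in Remark \ref{qexp}.
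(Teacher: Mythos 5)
Your treatment of the boundary fixed points is essentially the paper's: near the cusps the linearized bundle $L$ is equivariantly isomorphic to $\mathcal O$ (the constant Fourier coefficient gives a $\sigma$-invariant trivializing section), so the character there is trivial. The problem is the interior case, which is where you yourself locate the difficulty. Your cocycle computation correctly shows that at a fixed point coming from $A\in\Gamma(p)$ with $AA^{\sigma}A^{\sigma^{2}}=\mathrm{Id}$ the character is $\chi=\prod_i(c_iz_i+d_i)$ and that $\chi^{3}=j(\mathrm{Id},z)=1$, so $\chi$ is a cube root of unity. But the step that rules out a \emph{nontrivial} cube root of unity is not actually carried out, and the justification you sketch does not work: the three factors $c_iz_i+d_i$ involve the three independent coordinates $z_i\in\HH$, which are transcendental and vary along the fixed curve, so their product is not ``a rational number equal to a cube'' --- a priori it is just some complex number whose cube is $1$, and nothing in the congruence conditions on $\Gamma(p)$ alone prevents it from being $e^{\pm 2\pi\ii/3}$. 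This is a genuine gap, not a routine verification.

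The paper closes exactly this gap by a continuity argument that your proposal is missing. The fixed locus of a finite-order automorphism of a smooth variety is a disjoint union of smooth \emph{closed} subvarieties, and the character of an equivariant line bundle restricted to the fixed locus is locally constant, hence constant on each connected component. The proof of Lemma \ref{fixedloci} shows that each $\sigma$-fixed curve in the finite part is a modular-type curve whose closure in $X_{sm}$ meets the boundary; at such a limit point the character is trivial by the cusp computation you already did, and by local constancy it is therefore trivial along the entire component. So your argument can be repaired by replacing the attempted arithmetic evaluation of $\prod_i(c_iz_i+d_i)$ with this degeneration-to-the-cusp step; alternatively you would need a genuinely new arithmetic input to pin down the cube root of unity directly, which you have not supplied.
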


\begin{proof}
The fixed locus of any finite order automorphism of a smooth variety
is a disjoint union of smooth subvarieties, and the character of the
restriction of a line bundle is locally constant.

Over cusps, the linearized line bundle $L$ is isomorphic to $\mathcal
O$, so the statement is clear. The proof of Lemma \ref{fixedloci}
shows that every $\sigma$-fixed curve in the finite part contains a
point at infinity in its closure in $X_{sm}$. Hence, the character of
$\sigma$ on $L_x$ is trivial on each of these components as well.
\end{proof}

\begin{theorem}\label{symhm3fd}
The symmetric Hilbert modular threefold $X_{\Gal}$ is isomorphic to
the degree $8$ hypersurface in $\PP(1,1,1,1,2)$ given by $P_8=0$.
\end{theorem}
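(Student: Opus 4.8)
The plan is to construct an explicit isomorphism between $X_{\Gal}$ and the hypersurface $\{P_8=0\}\subset \PP(1,1,1,1,2)$ by using the Eisenstein series $F_0,F_1,F_2,F_4,E_2$ as projective coordinates, exploiting the representation theory already assembled in Corollary \ref{isfour} and Proposition \ref{theaction}. First I would observe that the $G$-invariant forms among the $F_i$ and $E_2$ descend to the Galois quotient: since the Galois action $\sigma$ is exactly the action of a generator of $G/Z(G)$ (up to the center, which acts trivially on $\HH^3$), the subring of parallel-weight forms invariant under $\sigma$ is generated by $F_0,F_1,F_2,F_4,E_2$ acted on by $G$, and these are precisely the forms that are pullbacks of functions on $X_{\Gal}$. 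This gives a rational map $\Phi\colon X_{\Gal}\dashrightarrow \PP(1,1,1,1,2)$ whose image lands in $\{P_8=0\}$ by Proposition \ref{ugly}, and Proposition \ref{justugly} guarantees that $P_8$ cuts out exactly the closure of the image (no further relations).

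Next I would upgrade this rational map to a genuine morphism and prove it is an isomorphism onto $\{P_8=0\}$. The key input is that the five forms have no common zero on $X_{sm}$, so that $\Phi$ is everywhere defined; for this one uses that $H^0(X_{sm},L)$ is four-dimensional and base-point free away from the boundary, together with the boundary behavior recorded in Proposition \ref{infdiv} and Lemma \ref{trivchar}. To show $\Phi$ is birational onto its image, I would compute degrees: the hypersurface $\{P_8=0\}$ in $\PP(1,1,1,1,2)$ has a well-defined degree, and comparing this against the volume/intersection data from Propositions \ref{Euler} and \ref{intnumbers} (in particular $L^3=12$, and the fact that passing to the Galois quotient divides the relevant self-intersection by $3$) should show that $\Phi$ has degree one onto $\{P_8=0\}$. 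Because $P_8$ is irreducible (established inside the proof of Proposition \ref{justugly}), the image is an irreducible threefold, so a degree-one dominant map between irreducible projective varieties is birational.

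The final step is to promote birationality to an isomorphism. Here I would use that $X_{\Gal}$ is normal (being a quotient of the normal variety $X$ by a finite group) and that $\{P_8=0\}$ is normal as well — this follows since the singular locus of the hypersurface has codimension at least two, which one can check from the $A_2$-type singularities described in Proposition \ref{prop:octic} and the weight-two stratum. A birational morphism between normal projective varieties that is finite (equivalently, has no positive-dimensional fibers) is an isomorphism by Zariski's main theorem, so it suffices to rule out contracted curves; Lemma \ref{fixedloci} and Lemma \ref{trivchar} control the fixed loci and the linearization precisely so that no exceptional behavior occurs over the Galois-fixed locus. The main obstacle I anticipate is exactly this last normality-and-finiteness verification: one must carefully match the singularities of the abstract quotient $X_{\Gal}$ (coming from the Galois fixed curves of Lemma \ref{fixedloci} and from the cusp resolutions) against the singularities of the explicit hypersurface, and confirm that $\Phi$ neither contracts a divisor nor collapses a curve, so that the set-theoretic bijection is genuinely an isomorphism of varieties rather than merely a normalization.
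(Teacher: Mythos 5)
Your strategy is genuinely different from the paper's, but it contains a conceptual error and leaves the central point unproved. The error: the Galois automorphism $\sigma$ (cyclic permutation of the coordinates of $\HH^3$) is \emph{not} an element of $G=\Gamma(1)/\Gamma(p)\simeq\SL(2,\FF_7)$, nor of $G/Z(G)\simeq\PSL(2,\FF_7)$ (a simple non-cyclic group, so ``a generator of $G/Z(G)$'' is not meaningful); it arises from the action of $\Gal(K/\QQ)$ on the real places of $K$ and merely normalizes the $G$-action. The correct reason $F_0,F_1,F_2,F_4,E_2$ are $\sigma$-invariant is that the Galois group acts trivially on $\OOO/p$ and permutes the Fourier exponents, so the $q$-expansions are visibly symmetric. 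More importantly, invariance only gives the inclusion of the subring $M'$ generated by these five forms into the full invariant ring $M=M^{\Gal}$ of parallel-weight forms; since $X_{\Gal}$ is $\operatorname{Proj} M$ and $\{P_8=0\}$ is $\operatorname{Proj} M'$ (by Proposition \ref{justugly}), the entire content of the theorem is the reverse inclusion $M\subseteq M'$, which your proposal asserts in the first paragraph rather than proves. The paper establishes it by a graded dimension count: the Poincar\'e series of $M'$ is read off from Proposition \ref{justugly}, and the dimension of the $\sigma$-invariant part of the cusp forms is computed from the holomorphic Lefschetz formula, where Lemmas \ref{fixedloci} and \ref{trivchar} force the equivariant correction term $h(k)$ to be an honest polynomial (linear) in $k$, hence determined by its values at $k=2$ and $k=3$. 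Nothing in your argument substitutes for this step.

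The geometric route you propose in its place rests on several claims that are neither in the paper nor easy: (i) base-point freeness of the system $(F_0,F_1,F_2,F_4,E_2)$ on $X$ is asserted but not proved, and does not follow formally from $\dim H^0(X_{sm},L)=4$; (ii) normality of $\{P_8=0\}\subset\PP(1,1,1,1,2)$ is inferred from Proposition \ref{prop:octic}, which concerns the different variety $\{Q=0\}\subset\PP^3$ (the slice $E_2=0$); (iii) the absence of contracted curves is exactly the hard finiteness statement you flag at the end, and Lemmas \ref{fixedloci} and \ref{trivchar} do not control it --- they describe the $\sigma$-fixed locus and the linearization of $L$ there, not the fibers of the map to weighted projective space. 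In principle a finite birational morphism onto a normal variety is an isomorphism, so your plan could be completed, but each of (i)--(iii) would require substantial new verification; the ring-theoretic identification $M=M'$ is both the cleaner and, in this paper, the actual proof.
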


\begin{proof}
We will prove that the ring $M=M^{\rm Gal}$ of modular forms of
parallel weight invariant under the Galois action is isomorphic to
the subring $M'$ generated by $F_i$ and $E_2$, which clearly
suffices in view of Proposition \ref{justugly}.

First of all $M'$ is a subring of $M$. Indeed, $F_i$ and $E_2$
are invariant under the permutation of variables $(z_1,z_2,z_3)
\to (z_2,z_3,z_1)$, since the Galois group acts trivially on
${\OOO}/p$. By Proposition \ref{justugly} the Poincare series of
$M'$ is $f(t)=(1+t^2+t^4+t^6)(1-t)^{-4}$.
so it suffices to show that the Poincare series of $M$ is
the same. For degree $k\leq 3$ all the modular forms of weight
$(k,k,k)$ (Galois invariant or not) lie in $M'$, since the
dimension of the degree $k$ part of $M'$ is equal to $h^0(kL)$.
For $k\geq 4$ the dimension of $M'_{\deg=k}$ is
$$
\frac 23 k^3-2k^3+\frac {34}3 k -10
$$
so all one needs to do is to calculate the dimension of the Galois-invariant
part of the space of $(k,k,k)$ forms.

The Eisenstein series of weight $(k,k,k)$ are Galois-invariant,
because the Galois group fixes the cusps. Thus the dimension of the
degree $k$ part of $M$ is $8$ plus the dimension of the Galois-invariant
part of $H^0(X_{sm},kL-D-E)$. The line bundle
${\mathcal O}_{X_{sm}}(kL-D-E)$ for $k\geq 3$ satisfies Kawamata vanishing,
so it has no higher cohomology and we can use
the holomorphic Lefschetz formula
for the trace of the action of the Galois group on the cohomology of $H^i(L)$.
Since the Euler characteristics of ${\mathcal O}_{X_{sm}}(kL-D-E)$
is $2(k-1)^3$, the Euler characteristics of the Galois-invariant part
is $\frac 23(k-1)^3+h(k)=
\frac 23(k-1)^3+2{\rm Re}({\rm Trace}(\sigma))$ where  $\sigma$ is the
operator on the cohomology of the linearized line bundle
${\mathcal O}_{X_{sm}}(kL-D-E)$ that corresponds to the generator of the
Galois group.

We will now use the Lemmas \ref{fixedloci} and \ref{trivchar} to show
that $h(k)$ is a linear function of $k$. Indeed, the holomorphic
Lefschetz formula gives a sum over the components $Z_i$ of the fixed
locus of $\sigma$ of integrals of some fixed class times the
equivariant Chern character of $kL$ restricted to $Z_i$. This will
give a mod 3 quasi-polynomial by Lemma \ref{fixedloci}. Moreover, it
will in fact be polynomial, since there will be no factors of the form
$\exp((2\pi \ii/3) k)$ by Lemma \ref{trivchar}.


Thus it suffices to find $h(k)$ for two values of $k$ in order to calculate
the graded dimension of $M$. For $k=2$ we are dealing with the
canonical class $2L-D-E$ in its natural linearization.
The only cohomology occurs at $H^0$ and $H^3$.
The cohomology for $H^0$ is Galois-invariant, since it can be written
in terms of the elements of $M'$. The cohomology at $H^3$ is Serre dual
to $H^0(X_{sm},{\mathcal O})$, and the natural linearizations of $K$
and $\mathcal O$ are compatible
with Serre duality.  Thus we have $h(2)=(3-1)-\frac 23(2-1)^3 = \frac 43$.
For $k=3$ we have ${\mathcal O}(3L-D-E)$. Since all weight $(3,3,3)$ forms
are Galois invariant, so are all cusp forms of this weight. There are no
higher cohomology, and we have $\chi(3L-D-E)=2(3-1)^3=16$.   
Hence we have
$$
h(3)=16 - \frac 23(3-1)^3= \frac {32}3.
$$
This leads to $h(k)=\frac 13(28k-52)$, which then shows that
for $k\geq 4$ $\dim M_k = \dim M'_k$.
\end{proof}

\section{The canonical model}\label{s:canonical}
The goal of this section is to prove that the partial resolution
$X_{ch}$ is the canonical model of the field of modular functions
for $\Gamma(p)$.

It is well-known that global holomorphic top forms on $X_{sm}$ can be
identified with cusp forms of parallel weight $(2,2,2)$.  In fact, we
can calculate the basis of these forms in our situation.
\begin{proposition}\label{basis}
The dimension of the space $H^0(X_{sm},K_{sm})$ is $3$. Its basis is
given by $2F_0F_1-F_4^2,2F_0F_2-F_1^2,2F_0F_4-F_2^2$.
\end{proposition}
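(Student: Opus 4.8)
The plan is to identify $H^0(X_{sm},K_{sm})$ with the space of parallel weight $(2,2,2)$ cusp forms and then to verify that the three proposed quadratic expressions in the $F_i$ are cusp forms lying in this space, after which linear independence and a dimension count will finish the argument. By Proposition \ref{canon} the canonical class is $K_{sm}=2L-D-E$, so global holomorphic top forms are exactly sections of $\OOO(2L-D-E)$, i.e.\ parallel weight $(2,2,2)$ cusp forms. Proposition \ref{zeroorfour}(2) already tells us that $\dim H^0(X_{sm},2L-D-E)=3$, so the only thing to establish is that $2F_0F_1-F_4^2$, $2F_0F_2-F_1^2$, and $2F_0F_4-F_2^2$ form a basis of this three-dimensional space.

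First I would check that each of these expressions is a weight $(2,2,2)$ cusp form. Being a product of two weight $(1,1,1)$ forms, each is manifestly a holomorphic weight $(2,2,2)$ form, so they lie in $H^0(X_{sm},2L)$; the content is that they vanish along $D+E$, i.e.\ that they are cuspidal. This I would verify by examining the constant terms in the $q$-expansions at the cusps. Using the action of $g_7$ (namely $g_7(F_i)=\zeta_7^{3i}F_i$ from Proposition \ref{theaction}), one sees that the combinations $F_0F_1$ and $F_4^2$ transform identically under $g_7$ (with eigenvalue $\zeta_7^3$), so the differences are adapted to the symmetry; more importantly, from Definition \ref{defFi} the only $F_i$ with a nonzero constant term is $F_0$ (with $c_0=1/14$), while $F_1,F_2,F_4$ have no constant term. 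Hence the constant term of $2F_0F_1-F_4^2$ at the cusp $(\ii\infty)^3$ is $2c_0\cdot 0-0=0$, and similarly for the others. One must also check cuspidality at the remaining $7$ cusps, but since $G\simeq\SL(2,\FF_7)$ acts transitively on the cusps (Lemma \ref{lem:cusps}) and the span of the $F_i$ is a $G$-representation (Corollary \ref{isfour}), it suffices to control the behavior at one cusp and propagate by the group action.

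Finally I would establish linear independence of the three forms. The cleanest route is again via the $g_7$-eigenvalues: under $g_7$ the three quadratics $2F_0F_1-F_4^2$, $2F_0F_2-F_1^2$, $2F_0F_4-F_2^2$ are eigenvectors with eigenvalues $\zeta_7^{3}$, $\zeta_7^{6}$, and $\zeta_7^{5}$ respectively (computed from $3\cdot 0+3\cdot 1=3$, $6\cdot 2=12\equiv 5$ contributions, etc.), and distinct eigenvalues force linear independence. Combined with $\dim H^0(X_{sm},2L-D-E)=3$ from Proposition \ref{zeroorfour}(2), three linearly independent cusp forms exhaust the space and form a basis.

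The main obstacle I anticipate is the cuspidality verification away from the cusp at $(\ii\infty)^3$: one needs the vanishing of constant terms at all eight cusps, not merely the obvious one. I expect this to be handled by the transitivity of the $G$-action on cusps together with the fact that the linear span of the $F_i$ is stable under $G$, so that cuspidality at one cusp transports to all cusps; alternatively, one could verify directly that the quadratics lie in the image of $H^0(X_{sm},2L-D-E)\hookrightarrow H^0(X_{sm},2L)$ by noting that the quotient is the $8$-dimensional space $H^0(\OOO_{D\cup E})$ detecting the eight constant terms, and that each quadratic maps to zero there.
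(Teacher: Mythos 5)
Your reduction of the statement to exhibiting three linearly independent weight-$(2,2,2)$ cusp forms is fine (the dimension $3$ is indeed Proposition \ref{zeroorfour}(2)), and the $g_7$-eigenvalue computation for linear independence is correct. The gap is in the cuspidality verification. You check the vanishing of the constant term only at the cusp $(\ii\infty)^3$, and then assert that transitivity of $G$ on the cusps together with $G$-stability of the span of the $F_i$ transports this vanishing to the other seven cusps. That inference is not valid: the constant term of $f$ at the cusp $g\cdot(\ii\infty)^3$ equals the constant term of $g^{-1}f$ at $(\ii\infty)^3$, and $g^{-1}f$ is some \emph{other} quadratic in the $F_i$ whose $F_0^2$-coefficient need not vanish (a generic quadratic in the $F_i$ is not cuspidal even at the standard cusp). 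Transitivity only converts cuspidality into the vanishing of eight constant terms, all of which must actually be checked; it does not let you check one and infer the rest unless you additionally know that the span of the three quadratics is itself $G$-stable. Your fallback sentence ("each quadratic maps to zero in $H^0(\mathscr{O}_{D\cup E})$") restates the claim rather than proving it.

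The paper closes exactly this step by representation theory: the $11$-dimensional space of weight-$(2,2,2)$ forms is spanned by $E_2$ and the ten quadratic monomials in the $F_i$ and decomposes under $G$ as $1\oplus 3\oplus 7$; the Eisenstein quotient is the permutation representation on the eight cusps, which is $1\oplus 7$; hence the cusp forms are precisely the unique $3$-dimensional summand of the space of quadrics, and one checks that the three stated quadrics span a $G$-stable (hence that) subspace, using the explicit matrices of Proposition \ref{theaction}. The paper's stated alternative is to compute the eight $G$-translates of the point $(1{:}0{:}0{:}0)$ and verify directly that the quadrics vanish at all of them. Either of these computations would complete your argument; as written, the passage from one cusp to eight is missing.
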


\begin{proof}
The dimension statement follows from the trace formula, see
\cite{Freitag}. To find the basis, observe that the space of modular
forms of weight $(2,2,2)$ is a $G$-module of dimension $11$. It is
easy to show that $E_{2}$ and the degree two monomials in $F_0,F_{1},
F_{2}, F_4$ are linearly independent and hence give a basis of the
weight $(2,2,2)$ forms. As a $G$-module, the space of weight $(2,2,2)$
forms decomposes into the direct sum of irreducibles of degrees $1, 3,
7$. This implies that the cusp forms of parallel weight $2$ correspond
to the three-dimensional summand of the space of quadrics in
$F_0,\ldots,F_4$. It is easy to check that the three quadrics in the
statement generate this subspace. Alternatively, one can look at the
eight translates of the cusp $(\ii\infty)^3$ given by
$(F_0:F_1:F_2:F_4)=(1:0:0:0)$ by the action of $\SL(2,\FF _7)$ and
look at the quadrics that vanish at these points.
\end{proof}

\begin{proposition}\label{2K}
Identify the space of sections of $2K$ on $X_{ch}$ (or on any other
model of its function field with canonical singularities) with the
subspace of $H^0(X_{ch},2L)$ that vanishes twice at the exceptional
divisor $D$.  Under this identification, the following eight elements
of $H^0(X_{ch}, 2L)$ give sections of $2K$:
$$
\begin{array}{l}
F_0^4-
(\frac 67E_2)^2+3F_0F_1F_2F_4+\frac 12
(F_2F_4^3+F_4F_1^3+F_1F_2^3),\\
 4F_0^4 +\frac {24}7E_2F_0^2 - 10F_4F_2F_1F_0 + F_4F_1^3 + F_2^3F_1 + F_4^3F_2,\\
4F_1F_0(F_0^2+\frac 67E_2) - 10F_4^2F_0^2 - 4F_4F_2^2F_0 -4F_4F_2F_1^2 + F_2^4 +\frac {12}7E_2F_4^2,\\
 4F_2F_0(F_0^2+\frac 67E_2) - 10F_1^2F_0^2 - 4F_1F_4^2F_0 -4F_1F_4F_2^2 + F_4^4 +\frac {12}7E_2F_1^2,\\
 4F_4F_0(F_0^2+\frac 67E_2) - 10F_2^2F_0^2 - 4F_2F_1^2F_0 -4F_2F_1F_4^2 + F_1^4 +\frac {12}7E_2F_2^2,\\
2F_2F_1F_0^2 + 2F_1^3F_0 - 2F_4^2F_2F_0  + F_4^2F_1^2 -\frac {12}7E_2F_2F_1 + 2F_4F_2^3,\\
2F_1F_4F_0^2 +2F_4^3F_0 - 2F_2^2F_1F_0  +F_2^2F_4^2 -\frac {12}7E_2F_1F_4 +2F_2F_1^3,\\
2F_4F_2F_0^2 + 2F_2^3F_0 - 2F_1^2F_4F_0  + F_1^2F_2^2 -\frac {12}7E_2F_4F_2 +2F_1F_4^3.
\end{array}
$$
\end{proposition}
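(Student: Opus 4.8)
The plan is to rephrase membership in $2K_{ch}$ as an explicit order-of-vanishing condition and then to verify it through the Fourier expansions of Remark \ref{qexp}. Propositions \ref{canon} and \ref{infdiv}(5) together give $K_{ch}=2L-D$ on $X_{ch}$, whence $2K_{ch}=4L-2D$. Since the eight displayed elements are weight $(4,4,4)$ forms — each is homogeneous of weight $4$ in the $F_i$ (weight $1$) and $E_2$ (weight $2$) — they are automatically sections of $4L$, so the entire content of the proposition is that each vanishes to order at least $2$ along every component of $D$. I emphasize that $E$ plays no role here: these forms will typically vanish only to order $1$ along the $E_i$, which is precisely the distinction between the canonical model $X_{ch}$ and the smooth model $X_{sm}$. (Consistently, the pairwise products of the three weight-$2$ cusp forms of Proposition \ref{basis} are sections of $2K_{sm}=4L-2D-2E$, a proper subspace of the space being described, so the eight forms genuinely exploit the weaker $D$-only condition.)

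Next I would reduce to a single cusp. The divisor $D$ has three components over each of the $8$ cusps, and by Lemma \ref{lem:cusps} the group $G=\SL(2,\FF_7)$ permutes the cusps transitively, carrying the triple $(D_1,D_2,D_3)$ over one cusp to the triple over any other. Since $H^0(X_{ch},4L-2D)$ is a $G$-submodule of $H^0(X_{ch},4L)$, it suffices to show that the span $V$ of the eight forms is $G$-stable and that every element of $V$ vanishes to order $\ge 2$ along $D_1,D_2,D_3$ at the standard cusp $(\ii\infty)^3$; the $G$-stability is a finite check using the matrices $\gamma_4,\gamma_7$ of Proposition \ref{theaction} and the $\Gamma(1)$-invariance of $E_2$. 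The cyclic relabelling $F_1\mapsto F_2\mapsto F_4\mapsto F_1$, which also cyclically permutes $D_1,D_2,D_3$, organizes the eight forms into the orbits $\{1\},\{2\},\{3,4,5\},\{6,7,8\}$ and correspondingly cuts down the number of genuinely distinct verifications.

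The core step is the vanishing computation at $(\ii\infty)^3$, carried out in the toric model of the resolution. By Lemma \ref{trivchar} the bundle $L$ is trivial near the cusp, so the order of a weight-$4$ form along a boundary divisor equals the order of vanishing of its $q$-expansion. From Remark \ref{nearby} and Figure \ref{fig3} one reads off the primitive ray generators: the $E_i$ form the Galois orbit of $2-w$, which after rescaling are exactly the coordinate divisors $\{q_j=0\}$ of Remark \ref{qexp}, while $D_1,D_2,D_3$ form a different Galois orbit $v_1,v_2,v_3$ of vertices of the convex hull. For a Fourier monomial $q^a=\exp(2\pi\ii\,\Tr(az)/7)$ the order along $D_i$ is the trace pairing $\Tr(v_i a)$, so in the exponent coordinates $(a_1,a_2,a_3)$ of Remark \ref{qexp} it is a fixed nonnegative integral linear functional, and I would verify $\min\ge 2$ by inspecting the lowest-order monomials of each form. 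The first check is automatic, since every form has vanishing constant term (for instance $F_0^4-(\tfrac67E_2)^2$ has constant term $(1/14)^4-(1/196)^2=0$). One then pushes one order further, keeping enough terms from the expansions in Remark \ref{qexp}, and confirms that the potentially dangerous monomials — such as $q(3,1,11)$ in $E_2$, which has a single exponent equal to $1$ and hence gives order $1$ only along an $E$-divisor $\{q_j=0\}$ — pair to at least $2$ against each $v_i$.

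The main obstacle is exactly this last bookkeeping: correctly distinguishing the Galois orbit giving $D_1,D_2,D_3$ from the orbit giving $E_1,E_2,E_3$ and from the non-exceptional boundary rays, fixing the primitive generators $v_i$ and hence the precise linear functionals, and carrying enough Fourier terms to be certain the minimum is attained at $2$ rather than at $1$. Once the rays are pinned down, each of the remaining verifications is a short finite computation, and everything else — the canonical bundle formula, the reduction to one cusp, and homogeneity — is formal.
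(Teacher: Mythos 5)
Your strategy coincides with the paper's: both reduce to the cusp $(\ii\infty)^3$ using transitivity of $G$ on the cusps plus $G$-stability of the span of the eight forms (a finite computation with the matrices of Proposition \ref{theaction}; the paper reports that the first form is $G$-invariant and the remaining seven span a $7$-dimensional irreducible), and then verify order-$2$ vanishing along $D_1\cup D_2\cup D_3$ over that cusp from the expansions of Remark \ref{qexp}. Your reading $2K_{ch}=4L-2D$ (the ``$2L$'' in the statement being a typo for $4L$) is also correct.

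The problem is that the step you yourself flag as the main obstacle --- deciding which boundary divisors the exponent coordinates of Remark \ref{qexp} measure --- you resolve backwards. The coordinate $q_j$ is dual to $\sigma^{j-1}(2-w)$, and $\{2-w,\,4-w^2,\,1+w+w^2\}$ is a basis of the lattice $p$ in which the fan lives; dividing these rays by $2-w$ gives $1$, $2+w$, $(1+w)^2$, which are convex-hull vertices lying in the three distinct $U_1$-classes (their residues mod $p$ are $1,4,2$). So the three coordinate hyperplanes $\{q_j=0\}$ are representatives of $D_1,D_2,D_3$, not of the $E_i$; the $E_i$ are instead the rays $(2-w)\cdot\{2-w,4-w^2,1+w+w^2\}$, along which the order of a monomial is a different functional (e.g.\ $4a_1-a_2$ for the ray $(2-w)^2$), and sections of $4L-2D$ automatically vanish to order $3$ there, contrary to your side remark. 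With the correct identification the verification is exactly the ``straightforward'' one the paper intends: every non-constant monomial occurring has all exponents $\geq 2$ except those arising from a single factor of $F_1$, $F_2$, $F_4$ or $E_2$ multiplied by constant terms, and one checks that the constant term and the $q(1,1,1)$-coefficient cancel (e.g.\ $4(1/14)^3+4\cdot\tfrac 67\cdot\tfrac 1{14}\cdot(-\tfrac 1{168})=0$ in the third form). Your treatment of $q(3,1,11)$ in $E_2$ as harmless ``because it only meets an $E$-divisor'' is symptomatic: on the correct reading an exponent equal to $1$ would destroy order-$2$ vanishing along a $D$-component in the term $\tfrac{24}7E_2F_0^2$; in fact $q(3,1,11)$ is a typo for $q(3,7,11)$ (compare the parallel term of $F_0$), and noticing this is part of making the check close. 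So the architecture of your proof matches the paper's, but as written the central computation would be carried out against the wrong divisors and with the wrong linear functionals.
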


\begin{proof}
It is straightforward to see that these forms vanish twice at the part
of $D$ that sits over the image of $(\ii\infty)^3$ in view of the
explicit expansions in Remark \ref{qexp}.  It suffices to check that
the span of the above series is invariant under the action of $G$
given by Proposition \ref{theaction}.  We have done the latter
calculation by GP-Pari. The first element is invariant under $G$,
while the latter seven span a seven-dimensional irreducible
representation of $G$.
\end{proof}

\begin{lemma}\label{ampleonD1}
The $\QQ$-Cartier divisor $K_{ch}$ is ample on the divisor $D_1$ on
$X_{ch}$.
\end{lemma}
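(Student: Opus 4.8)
The plan is to use the toric structure of $D_1$. By Proposition \ref{infdiv}, $D_1$ is a complete toric surface, and on $X_{ch}$ its fan together with the relevant normal data is recorded in the right half of Figure \ref{fig5}. Since the line bundle $L$ is a pullback from $X$ and $D_1$ is contracted by $\pi_1$, the restriction $L|_{D_1}$ is trivial; hence $K_{ch}|_{D_1} = (2L-D)|_{D_1} = -D|_{D_1}$. Writing $D = D_1 + D_2 + D_3 + \cdots$, only the three exceptional divisors over the relevant cusp meet $D_1$, so $-D|_{D_1} = -D_1|_{D_1} - D_2|_{D_1} - D_3|_{D_1}$. The self-intersection term $-D_1|_{D_1}$ is the negative of the normal bundle of $D_1$ computed in Proposition \ref{infdiv}(4), while $-D_2|_{D_1}$ and $-D_3|_{D_1}$ are the negatives of the torus-invariant boundary curves $D_1\cap D_2$ and $D_1\cap D_3$. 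Adding these contributions realizes $K_{ch}|_{D_1}$ as a torus-invariant $\QQ$-Cartier divisor whose support (piecewise linear) function $\psi$ can be read off from Figure \ref{fig5}, exactly in the style of the computation in the proof of Proposition \ref{intnumbers}.

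The criterion I would then invoke is the toric characterization of ampleness on a complete toric surface: a torus-invariant $\QQ$-Cartier divisor is ample if and only if its support function is strictly convex, equivalently if and only if it meets every torus-invariant curve positively. (For a complete toric surface the Mori cone is generated by the classes of the torus-invariant curves, so Kleiman's criterion reduces ampleness to positivity against these finitely many generators.) Concretely, for each ray $\rho$ of the fan of $D_1$, with $C_\rho$ the corresponding invariant $\PP^1$ and $v_-, v_+$ the primitive generators of the adjacent rays satisfying $v_- + v_+ = b\, v_\rho$, the degree $K_{ch}|_{D_1}\cdot C_\rho$ is the jump of $\psi$ across the wall $\rho$. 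I would verify ray by ray that every such jump is strictly positive; since there are only finitely many rays and the values of $\psi$ are displayed in Figure \ref{fig5}, each check is an elementary arithmetic computation.

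The main obstacle is not conceptual but lies in the careful bookkeeping near the $\ZZ/2\ZZ$ quotient singularity of $X_{ch}$ that sits on $D_1$. Because $K_{ch}$ is only $\QQ$-Cartier there, the support function $\psi$ takes non-integral (half-integer) values at that ray, and the wall-crossing positivity must be checked over $\QQ$ rather than $\ZZ$; one must also be sure to assemble $-D|_{D_1}$ correctly from the normal-bundle data and the two boundary curves, with the right primitive generators coming from Remark \ref{nearby}. Once strict convexity of $\psi$ is confirmed at every ray, the positivity $K_{ch}|_{D_1}\cdot C_\rho > 0$ for all torus-invariant curves follows, and ampleness of $K_{ch}$ on $D_1$ is immediate.
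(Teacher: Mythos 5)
Your proposal is correct and follows essentially the same route as the paper: identify $D_1$ as a complete toric surface with $K_{ch}|_{D_1}$ given by the piecewise-linear function of Figure \ref{fig5}, use the fact that the effective cone of a complete toric surface is generated by the torus-invariant boundary curves, and verify positivity of the intersection with each of the six invariant curves by a standard toric wall-crossing computation (the paper finds each such number equals $1/2$). Your remark about handling the half-integer values at the $\ZZ/2\ZZ$ quotient point matches the paper's observation that one may instead do the computation on $D_1\subset X_{sm}$ via the projection formula to avoid fractional intersection numbers.
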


\begin{proof}
The fan of the toric surface $D_1$ on $X_{ch}$ is given on the right
of Figure \ref{fig5}, with $K_{ch}\vert D_1$ given by the
piecewise-linear function whose values on the generators of
one-dimensional cones of the fan are given in the Figure.  Since the
cone of effective curves on a toric surface is a finite polyhedral
cone spanned by the infinity divisors, it suffices to check that the
intersection number $K_{ch}M$ is positive for each of the six infinity
divisors $M$. A standard toric geometry calculation shows that for
each infinity divisor $M$ we have $K_{ch}M=1/2$.  If one would rather
avoid the singularities and the fractional intersection numbers, the
calculation can be done on $D_1\subset X_{sm}$, in view of the
projection formula.
\end{proof}

\begin{theorem}\label{canmodel}
The partial resolution $X_{ch}$ is the canonical model of the field of
modular functions of $\Gamma(p)$.
\end{theorem}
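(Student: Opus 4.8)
The plan is to check the two conditions that characterize a canonical model: that $X_{ch}$ has at worst canonical singularities, and that its canonical divisor $K_{ch}$ is ample. The first condition is almost immediate from what has already been established. By Proposition~\ref{infdiv} the variety $X_{ch}$ is smooth away from $24$ isolated $\ZZ/2\ZZ$ quotient points, and the discrepancy formula $K_{X_{sm}}=\pi_2^{*}K_{ch}+\tfrac12 E$ exhibits the exceptional divisor $E$ of $\pi_2$ with the strictly positive coefficient $\tfrac12$. Thus these points are terminal, hence a fortiori canonical, and the first condition holds.

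For the ampleness of $K_{ch}$ I would invoke the Nakai--Moishezon criterion for the $\QQ$-Cartier divisor $K_{ch}$ on the projective variety $X_{ch}$, namely that $\bigl(K_{ch}^{\dim V}\cdot V\bigr)>0$ for every positive-dimensional irreducible $V\subseteq X_{ch}$. The top-dimensional case is already done: since $\pi_2^{*}K_{ch}=K_{X_{sm}}-\tfrac12 E$, the projection formula and Proposition~\ref{intnumbers} give $K_{ch}^{3}=(K-\tfrac12 E)^{3}=36>0$. For subvarieties contained in the boundary, Lemma~\ref{ampleonD1} states that $K_{ch}|_{D_1}$ is ample, and hence by symmetry (the Galois action permutes $D_1,D_2,D_3$ and $G$ acts transitively on the cusps) $K_{ch}|_{D_i}$ is ample for every boundary surface $D_i$; this immediately yields $K_{ch}^{2}\cdot D_i>0$ and $K_{ch}\cdot C>0$ for every curve $C\subset D_i$.

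It remains to treat subvarieties meeting the open part $X^{\circ}$, and here I would use the Baily--Borel morphism $\pi_1\colon X_{ch}\to X$, which is an isomorphism over $X^{\circ}$ and contracts each cusp fiber $\pi_1^{-1}(\text{cusp})=D_1\cup D_2\cup D_3$ to a point. On $X$ the Hodge bundle $L_X$ of weight $(1,1,1)$ forms is ample, $L=\pi_1^{*}L_X$, and Proposition~\ref{infdiv} gives $K_{ch}=\pi_1^{*}K_X-D=2L-D$. Since $L$ restricts trivially to the $D_i$, it is nef on $X_{ch}$ and strictly positive on every curve not contained in $D$, while Lemma~\ref{ampleonD1} (applied on each of $D_1,D_2,D_3$, using that a line bundle on a projective scheme is ample once its restriction to each irreducible component is ample) shows that $K_{ch}$ is ample on every fiber of $\pi_1$, i.e.\ $K_{ch}$ is $\pi_1$-relatively ample.

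The hard part will be the subvarieties that cross from $X^{\circ}$ into the boundary. For a curve $C\not\subseteq D$ one has $K_{ch}\cdot C=2L_X\cdot\pi_1(C)-D\cdot C$; although $L_X\cdot\pi_1(C)>0$ by ampleness on $X$ and $D\cdot C\ge 0$, the difference is not positive for formal reasons, since the pullback of an ample class twisted by a merely $\pi_1$-relatively ample exceptional divisor need not be ample (this already fails for a single blow-up of $\PP^2$). This is precisely why the explicit numbers are indispensable: the value $K_{ch}\cdot M=\tfrac12>0$ on the boundary curves $M$ in the proof of Lemma~\ref{ampleonD1}, together with $K_{ch}^{3}=36>0$, is what forces $K_X$ to be ``ample enough'' for $2L-D$ to remain strictly positive across the cusps. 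Concretely, I expect to finish by first proving that $K_{ch}$ is strictly nef: any $K_{ch}$-trivial curve cannot lie in $D$ (where $K_{ch}$ is ample) and cannot lie entirely in $X^{\circ}$ (where $K_{ch}\cdot C=2L_X\cdot\pi_1(C)>0$), so it must be a crossing curve, whose positivity is controlled by the relative ampleness over $\pi_1$ together with the nef class $L$. Combining strict nef-ness with the surface and top-dimensional inequalities above then yields, via Nakai--Moishezon, that $K_{ch}$ is ample, completing the identification of $X_{ch}$ as the canonical model.
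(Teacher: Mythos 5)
Your reduction is sound as far as it goes: canonical (indeed terminal) singularities from the discrepancy $\tfrac12 E$, the Nakai--Moishezon criterion, $K_{ch}^3=(K-\tfrac12 E)^3=36>0$ from Proposition \ref{intnumbers}, and Lemma \ref{ampleonD1} for curves and surfaces inside the boundary all match the paper's proof. But the case you yourself label ``the hard part'' --- curves whose generic point lies in the finite part but which meet $D$ --- is left unproved, and the mechanism you propose cannot close it. Relative ampleness of $K_{ch}$ over $\pi_1$ together with nefness of $L=\pi_1^*L_X$ gives no lower bound on $2L\cdot C-D\cdot C$ once $D\cdot C>0$; as your own blow-up-of-$\PP^2$ remark shows, a class of the form $\pi^*(\text{ample})-(\text{exceptional})$ with the exceptional part relatively ample is exactly the configuration that can fail to be nef, and the numbers $K_{ch}\cdot M=\tfrac12$ and $K_{ch}^3=36$ do not by themselves control $D\cdot C$ for a crossing curve. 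The paper closes this case by a completely different, explicit argument: the sections of $K$ and $2K$ written out in Propositions \ref{basis} and \ref{2K} define a map of $X_{\Gal}$ to $\PP(1,1,1,2,\dots,2)$; a curve in the finite part with $K_{ch}\cdot C\le 0$ would be contracted by this map, hence would lie in the locus where the $5\times 11$ Jacobian of the eleven polynomials $R_j$ has rank less than five, and a Magma computation of the Hilbert series of the ideal of size-five minors shows that this locus is zero-dimensional. No soft positivity argument is offered in the paper, and your sketch does not supply a workable substitute.

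There is a second, smaller gap at the end: Nakai--Moishezon also requires $K_{ch}^2\cdot S>0$ for surfaces $S$ not contained in the boundary, and strict nefness together with $K_{ch}^3>0$ does not formally yield this (strictly nef and big does not imply ample). The paper disposes of it in one line: the three quadrics of Proposition \ref{basis} are sections of $K_{ch}$ with no common component, so $K_{ch}|_S$ is a nonzero effective curve class, and the already-established positivity on curves then gives $K_{ch}^2\cdot S>0$.
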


\begin{proof}
Recall that every field of functions of general type has a unique 
canonical model which is characterized in its birational class
by the properties of having canonical singularities and an ample 
canonical line bundle.

Clearly, $X_{ch}$ has canonical singularities, so it suffices to show
that $K_{X_{ch}}$ is ample. Note that $K_{X_{ch}}$ is only a Weil
divisor, but $2K_{X_{ch}}$ is Cartier. We will use the Nakai-Moishezon
criterion, see \cite[Theorem 1.37]{KollarMori}. By Proposition
\ref{intnumbers} we have $K_{X_{ch}}^3>0$, since the divisor $K-\frac
12E$ is the pullback of $K_{X_{ch}}$.  Let $C$ be an irreducible curve
on $X_{ch}$.  If $C$ lies in an infinity divisor, then $K_{X_{ch}}C>0$
by Lemma \ref{ampleonD1}.  So we can consider the case where the
generic point of $C$ lies in the finite part of $X_{ch}$.

Denote by $R_j, j=1,\ldots,11$ the polynomials in $F_i$ and $E_2$ from
Propositions \ref{basis} and \ref{2K}.  If $K_{X_{ch}}C\leq 0$ then
$C$ maps to a point in the weighted projective space
$\PP(1,1,1,2,2,2,2,2,2,2,2)$ given by the $R_j$.  Consider the
$5\times 11$ matrix of the partial derivatives of $R_j$ with respect
to $F_i$ and $E_2$.  For every point in the image of the finite part
of $C$ in $X_{\rm Gal}$ the rank of this matrix is less than five. We
have calculated in Magma \cite{magma} the Hilbert series of the scheme
defined by the size five minors of this matrix, to show that this
scheme is zero-dimensional.  This shows that $K_{ch}C>0$.

It remains to show that $K_{X_{ch}}^2S>0$ for any surface 
$S\subset X_{ch}$. It is easy to see that the quadrics from
Proposition \ref{basis} have no common components, so 
$K_{X_{ch}}S$ is an effective curve except perhaps if $S$ is a 
divisor at infinity. Then Lemma \ref{ampleonD1} finishes the 
argument.
\end{proof}

\section{An octic with $84$ singularities of type $A_2$}\label{s:octic}
In this section we describe explicitly an octic $W\subseteq \PP^3$
with $84$ isolated singular points of type $A_2$.

Consider the intersection of the Hilbert modular threefold $X_{Gal}$ with
the hypersurface $E_2=0$. The resulting surface $W$ is given by the
degree $8$ equation $Q (F_{0}, F_{1}, F_{2}, F_{4}) = 0$, where
\begin{multline*}
Q = -3F_0^8 + 38F_0^5F_1F_2F_4 + 13F_0^4(F_2F_4^3 + F_1^3F_4 + F_1F_2^3) 
\\
-46F_0^3(F_1^2F_4^3 + F_2^3F_4^2 + F_1^3F_2^2) 
+ F_0^2(5F_1F_4^5 + 5F_2^5F_4 + 5F_1^5F_2 + 23F_1^2F_2^2F_4^2 ) 
\\
-2F_0 (F_4^7 + F_2^7 + F_1^7 + 4F_1F_2^2F_4^4 + 4F_1^4F_2F_4^2 + 4F_1^2F_2^4F_4) 
\\
+ (2F_2^2F_4^6 + 2F_1^6F_4^2 + 2F_1^2F_2^6 - 5F_1^3F_2F_4^4 - 5F_1F_2^4F_4^3 - 5F_1^4F_2^3F_4)
\end{multline*}

in homogeneous coordinates $F_0,F_1,F_2,F_4$.
\begin{proposition}\label{prop:octic}
The octic $W$ in $\PP^3$ has $84$ singular points of type $A_2$.
\end{proposition}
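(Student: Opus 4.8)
The plan is to determine $\operatorname{Sing}(W)$ by a direct computation and then to identify the analytic type at a single representative point, using the symmetry of $W$ to propagate the answer to every singular point. Recall from Corollary \ref{isfour} that the span of the $F_i$ is the four-dimensional irreducible representation $V_4$ of $G=\SL(2,\FF_7)$ on which the central involution acts by $-1$; hence the induced action on $\PP^3=\PP(V_4)$ factors through $\bar G=\PSL(2,\FF_7)$, the simple group of order $168$. Since $E_2$ is $\Gamma(1)$-invariant and therefore fixed by $G$, the octic $Q=P_8\vert_{E_2=0}$ is obtained from the $G$-invariant polynomial $P_8$ by killing a $G$-invariant variable, so $Q$ is $\bar G$-invariant and the singular locus $\operatorname{Sing}(W)\subset\PP^3$ is a $\bar G$-stable closed subscheme. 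Because $Q$ is homogeneous of degree $8$, Euler's identity $F_0\,\partial Q/\partial F_0+F_1\,\partial Q/\partial F_1+F_2\,\partial Q/\partial F_2+F_4\,\partial Q/\partial F_4=8Q$ shows that $\operatorname{Sing}(W)$ is already cut out by the four partial derivatives $\partial Q/\partial F_i$ alone.

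First I would compute the scheme $Z=V(\partial Q/\partial F_0,\partial Q/\partial F_1,\partial Q/\partial F_2,\partial Q/\partial F_4)\subset\PP^3$ by a Gr\"obner basis calculation in Magma \cite{magma}. The expectation is that $Z$ is a reduced zero-dimensional subscheme of degree $84$; confirming zero-dimensionality and reducedness (via its Hilbert series and a radical computation) then shows that $W$ has exactly $84$ isolated singular points. To organize the verification of the singularity type, I would next read off the $\bar G$-orbit structure. Since $84=168/2$, the natural guess is that the $84$ points form a single $\bar G$-orbit with point-stabilizer of order two; this can be checked by letting the projectivized generators $\gamma_4,\gamma_7$ of Proposition \ref{theaction} act on the computed point set. (Should the points instead split into several orbits, it suffices to treat one representative of each.)

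Granting transitivity, all $84$ singular points are projectively equivalent under $\bar G$ and hence share the same analytic type, so it suffices to analyze one representative $P$. Working in an affine chart, $W$ is locally a hypersurface $f=0$ in $\CC^3$ with an isolated singularity at $P$; recall that such a singularity is of type $A_2$ precisely when, in the Taylor expansion $f=f_2+f_3+\cdots$, the quadratic part $f_2$ has rank exactly $2$ and the cubic $f_3$ is nonzero on the one-dimensional kernel of $f_2$ (this is the normal form $x^2+y^2+z^3$, equivalently $xy=z^3$, i.e.\ $\CC^2/\mu_3$). Thus I would translate $P$ to the origin of a suitable chart, compute the Hessian of $Q$ and verify that its rank is $2$ (ruling out the node $A_1$, which has rank $3$), and then verify that $f_3(e,e,e)\neq 0$ for a generator $e$ of $\ker f_2$ (ruling out $A_3$ and worse, where the cubic vanishes along the kernel line). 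By $\bar G$-transitivity this single finite linear-algebra check establishes the $A_2$ type at every singular point.

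The main obstacle is the explicit computation of $\operatorname{Sing}(W)$ together with control of the field of definition: the $84$ points need not be rational, so the Gr\"obner/primary-decomposition step must be carried out carefully enough to certify both the exact count and the reducedness of $Z$ over $\overline{\QQ}$, and the representative $P$ used for the local analysis will generally have coordinates in a proper extension of $\QQ$. This is consistent with, and indeed suggested by, the geometry of Section \ref{s:symmetric}: since $X_{\Gal}=X/(\ZZ/3)$ and an $A_2$ singularity is exactly the quotient $\CC^2/\mu_3$, the $84$ points should be the images of the one-dimensional fixed loci of the Galois action (Lemma \ref{fixedloci}) cut down by $E_2=0$, whose transverse $\ZZ/3$-structure produces precisely the $A_2$ type. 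Once the points are in hand, everything else reduces to the two finite checks described above.
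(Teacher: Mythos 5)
Your proposal is correct in outline but takes a genuinely different, more computational route than the paper. The paper gets the $84$ $A_2$ points essentially for free from the modular description: the diagonal in $\HH^3$ is the fixed locus of the Galois $\ZZ/3$-action, so $X_{\Gal}$ has transverse $A_2$ singularities along its image; $E_2$ restricts on the diagonal to the weight $6$ Eisenstein series $g_6$, whose zeroes are simple, and the hypersurface $E_2=0$ meets the diagonal transversally (its tangent space at such a point is $\sigma$-stable with eigenvalues the primitive cube roots of unity); hence $W$ acquires an $A_2$ point at each of the $84$ zeroes of $g_6$ modulo the group, i.e.\ at the $\bar G$-orbit of the image of $(\ii,\ii,\ii)$. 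The only computer input is then exactly the one you also need: the radical of the Jacobian ideal of $Q$ is zero-dimensional of degree $84$, which rules out any further singularities. Your plan replaces the modular identification by an explicit local analysis (corank-one Hessian plus nonvanishing cubic on the kernel) at one representative, propagated by $\PSL(2,\FF_7)$-transitivity; this works, and your $A_2$ criterion is the right one, but it forces you to compute the coordinates of a singular point over a number field and to verify transitivity, both of which the paper's argument avoids. One concrete correction: your expectation that the scheme $Z$ cut out by the four partials is reduced of degree $84$ is wrong --- at an $A_2$ point the local Tjurina algebra has length $2$, so $Z$ has degree $168$; only its radical has degree $84$, which is precisely what the paper computes and what your fallback radical computation would return. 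Your closing paragraph correctly guesses the paper's actual mechanism (the $\sigma$-fixed locus cut by $E_2=0$), and promoting that heuristic to the main argument is exactly how the paper keeps the computation light.
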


\begin{proof}
Note that $X_{Gal}$ has singularities of type $A_2$ along the image of
the diagonal in $\HH^3$. The Hilbert modular form $E_2$ restricts to
the $SL_{2} (\ZZ)$-Eisenstein series $g_6$ on the diagonal. In fact,
since $g_6$ has simple zeroes, at each such zero the surface $E_2=0$
in $\HH^3$ is nonsingular. The gradient of $E_2$ is Galois-invariant,
so the tangent space of $E_2=0$ has eigenvalues $(-\frac
12\pm\frac{\sqrt 3}2)$. This shows that shows that $E_2=0$ is
transversal to the diagonal. Thus, $W$ is singular at the images of
the zeroes of $g_6$, with singularities of type $A_2$, and perhaps at
some other points or curves.  There are $84$ such zeroes of $g_6$
modulo the group action, which form the orbit of $(\ii,\ii,\ii)$.

A Magma calculation shows that the Hilbert polynomial of the radical of
the Jacobian ideal of the equation of $W$ is $84$.  Consequently, $W$ has
exactly $84$ isolated singular points, which are the zeroes of $g_6$
as above.
\end{proof}

\begin{remark}
We believe that the $84$ singular points of type $A_2$ is the best
lower bound for octic to date. The upper bound of $98$ can be
established by the work of Miyaoka \cite{miyaoka}. See also
\cite{labs} for detailed discussion about related problems. It would
be interesting to see if our octic is in any way related to the
Endra\ss\ octics that have 168 singular points of type $A_1$.
\end{remark}

\bibliographystyle{amsplain_initials}
\bibliography{paper}

\providecommand{\bysame}{\leavevmode\hbox to3em{\hrulefill}\thinspace}
\providecommand{\MR}{\relax\ifhmode\unskip\space\fi MR }
\providecommand{\MRhref}[2]{%
  \href{http://www.ams.org/mathscinet-getitem?mr=#1}{#2}
}
\providecommand{\href}[2]{#2}
\begin{thebibliography}{10}

\bibitem{vanish}
L.~A. Borisov and P.~E. Gunnells, \emph{Toric modular forms and nonvanishing of
  {$L$}-functions}, J. Reine Angew. Math. \textbf{539} (2001), 149--165.

\bibitem{toric}
L.~A. Borisov and P.~E. Gunnells, \emph{Toric varieties and modular forms},
  Invent. Math. \textbf{144} (2001), no.~2, 297--325.

\bibitem{modular}
L.~A. Borisov, P.~E. Gunnells, and S.~Popescu, \emph{Elliptic functions and
  equations of modular curves}, Math. Ann. \textbf{321} (2001), no.~3,
  553--568.

\bibitem{magma}
W.~Bosma, J.~Cannon, and C.~Playoust, \emph{The {M}agma algebra system. {I}.
  {T}he user language}, J. Symbolic Comput. \textbf{24} (1997), no.~3-4,
  235--265, Computational algebra and number theory (London, 1993).

\bibitem{ehlers}
F.~Ehlers, \emph{Eine {K}lasse komplexer {M}annigfaltigkeiten und die
  {A}ufl\"osung einiger isolierter {S}ingularit\"aten}, Math. Ann. \textbf{218}
  (1975), no.~2, 127--156.

\bibitem{Freitag}
E.~Freitag, \emph{Hilbert modular forms}, Springer-Verlag, Berlin, 1990.

\bibitem{frobenius}
G.~Frobenius, \emph{{\"Uber Gruppencharaktere}}, {Sitzungsberichte der
  K\"onigliche Preu\ss ichen Akademie der Wissenschaften zu Berlin} (1896),
  985--1021, {(Gesammelte Abhandlungen v. III, pp. 1--37)}.

\bibitem{Grundman}
H.~G. Grundman, \emph{Explicit resolutions of cubic cusp singularities}, Math.
  Comp. \textbf{69} (2000), no.~230, 815--825.

\bibitem{dedekind}
P.~E. Gunnells and R.~Sczech, \emph{Evaluation of {D}edekind sums, {E}isenstein
  cocycles, and special values of {$L$}-functions}, Duke Math. J. \textbf{118}
  (2003), no.~2, 229--260.

\bibitem{hecke}
E.~Hecke, \emph{{Analytische Funktionen und algebraische Zahlen, zweiter
  Teil}}, Abh. Math. Sem. Hamburg Univ. \textbf{3} (1924), 213--236.

\bibitem{hirz}
F.~Hirzebruch, \emph{Hilbert modular surfaces}, Enseignement Math. (2)
  \textbf{19} (1973), 183--281.

\bibitem{hirz.5}
F.~Hirzebruch, \emph{The {H}ilbert modular group for the field {${\mathbb
  Q}(\sqrt 5)$}, and the cubic diagonal surface of {C}lebsch and {K}lein},
  Uspehi Mat. Nauk \textbf{31} (1976), no.~5(191), 153--166, Translated from
  the German by Ju. I. Manin.

\bibitem{hirz.bonn}
F.~Hirzebruch, \emph{The ring of {H}ilbert modular forms for real quadratic
  fields in small discriminant}, Modular functions of one variable, {VI}
  ({P}roc. {S}econd {I}nternat. {C}onf., {U}niv. {B}onn, {B}onn, 1976),
  Springer, Berlin, 1977, pp.~287--323. Lecture Notes in Math., Vol. 627.

\bibitem{h.vdv}
F.~Hirzebruch and A.~Van~de Ven, \emph{Hilbert modular surfaces and the
  classification of algebraic surfaces}, Invent. Math. \textbf{23} (1974),
  1--29.

\bibitem{hunt.nice}
B.~Hunt, \emph{Nice modular varieties}, Experiment. Math. \textbf{9} (2000),
  no.~4, 613--622.

\bibitem{janus}
B.~Hunt and S.~H. Weintraub, \emph{Janus-like algebraic varieties}, J.
  Differential Geom. \textbf{39} (1994), no.~3, 509--557.

\bibitem{kawa}
Y.~Kawamata, \emph{A generalization of {K}odaira-{R}amanujam's vanishing
  theorem}, Math. Ann. \textbf{261} (1982), no.~1, 43--46.

\bibitem{KollarMori}
J.~Koll{\'a}r and S.~Mori, \emph{Birational geometry of algebraic varieties},
  Cambridge Tracts in Mathematics, vol. 134, Cambridge University Press,
  Cambridge, 1998, With the collaboration of C. H. Clemens and A. Corti,
  Translated from the 1998 Japanese original.

\bibitem{labs}
O.~Labs, \emph{Dessins d'enfants and hypersurfaces with many
  {$A_j$}-singularities}, J. London Math. Soc. (2) \textbf{74} (2006), no.~3,
  607--622.

\bibitem{miyaoka}
Y.~Miyaoka, \emph{The maximal number of quotient singularities on surfaces with
  given numerical invariants}, Math. Ann. \textbf{268} (1984), no.~2, 159--171.

\bibitem{schur}
I.~Schur, \emph{{Untersuchungen \"uber die Darstellungen der endlichen Gruppen
  durch gebrochene lineare Substitutionen}}, J. Reine Angew. Math. \textbf{132}
  (1907), 85--137.

\bibitem{shim}
H.~Shimizu, \emph{On discontinuous groups acting on a product of upper half
  planes}, Ann. of Math. \textbf{77} (1963), 33--71.

\bibitem{shimura}
G.~Shimura, \emph{Introduction to the arithmetic theory of automorphic
  functions}, Publications of the Mathematical Society of Japan, vol.~11,
  Princeton University Press, Princeton, NJ, 1994, Reprint of the 1971
  original, Kano Memorial Lectures, 1.

\bibitem{shintani}
T.~Shintani, \emph{On evaluation of zeta functions of totally real algebraic
  number fields at non-positive integers}, J. Fac. Sci. Univ. Tokyo Sect. IA
  Math. \textbf{23} (1976), no.~2, 393--417.

\bibitem{siegel1}
C.~L. Siegel, \emph{The volume of the fundamental domain for some infinite
  groups}, Trans. AMS \textbf{39} (1936), 209--218.

\bibitem{siegel2}
C.~L. Siegel, \emph{{Zur Bestimmung des Fundamentalbereiche der unimodularen
  Gruppe}}, Math. Ann. \textbf{137} (1959), 427--432.

\bibitem{pari}
{The PARI~Group}, Bordeaux, \emph{{PARI/GP}}, 2005, available from
  \url{http://pari.math.u-bordeaux.fr/}.

\bibitem{vdg6}
G.~van~der Geer, \emph{Hilbert modular forms for the field {${\mathbb
  Q}(\sqrt6)$}}, Math. Ann. \textbf{233} (1978), no.~2, 163--179.

\bibitem{vdg.minimal}
G.~van~der Geer, \emph{Minimal models for {H}ilbert modular surfaces of
  principal congruence subgroups}, Topology \textbf{18} (1979), no.~1, 29--39.

\bibitem{vdg.vdv}
G.~van~der Geer and A.~van~de Ven, \emph{On the minimality of certain {H}ilbert
  modular surfaces}, Complex analysis and algebraic geometry, Iwanami Shoten,
  Tokyo, 1977, pp.~137--150.

\bibitem{vdgz}
G.~van~der Geer and D.~Zagier, \emph{{The Hilbert modular group for the field
  $\mathbb{Q}(\sqrt{13})$}}, Inv. Math. \textbf{42} (1977), 93--133.

\bibitem{thyang}
T.~Yang, \emph{C{M} number fields and modular forms}, Pure Appl. Math. Q.
  \textbf{1} (2005), no.~2, 305--340.

\end{thebibliography}

\end{document}